\numberwithin{equation}{section}
\crefname{figure}{Figure}{Figures}
\newcommand{\email}[1]{{\tt #1}}
\newcommand{\R}{\mathbb{R}}
\newcommand{\N}{\mathbb{N}}
\newcommand{\norm}[1]{\|#1\|}
\newcommand{\dist}{\operatorname{dist}}
\newcommand{\mv}{\,\mid\,}
\newcommand{\B}{{\cal B}}
\newcommand{\Sp}{{\cal S}}
\newcommand{\skalp}[1]{\langle #1\rangle}
\newcommand{\xb}{\bar x}
\newcommand{\yb}{\bar y}
\newcommand{\zb}{\bar z}
\newcommand{\inn}{{\rm int\,}}
\newcommand{\gph}{\mathrm{gph}\,}
\newcommand{\dom}{\mathrm{dom}\,}
\newcommand{\tto}{\rightrightarrows}
\newcommand{\conv}{\operatorname{conv}}
\newcommand{\funcsum}{\operatorname{sum}}
\newcommand{\funcdoub}{\operatorname{doub}}
\DeclareMathOperator*{\argmin}{\operatorname{argmin}}
\newcommand{\Gr}{{\rm gph\,}}
\newcommand{\epi}{{\rm epi\,}}
\newtheorem{theorem}{Theorem}[section]
\newtheorem{proposition}[theorem]{Proposition}
\newtheorem{remark}[theorem]{Remark}
\newtheorem{lemma}[theorem]{Lemma}
\newtheorem{corollary}[theorem]{Corollary}
\newtheorem{definition}[theorem]{Definition}
\tikzset{
    punkt/.style={
           rectangle,
           draw=white, very thick,
           text width=9em,
           minimum height=1.5em,
           text centered}
}
\title{Calmness and Calculus: Two Basic Patterns}
\author{%
	Mat\'{u}\v{s} Benko%
	\thanks{%
		University of Vienna,
		Applied Mathematics and Optimization,
		1090 Vienna,
		Austria,
		\email{matus.benko@univie.ac.at},
		\url{https://www.mat.univie.ac.at/\~rabot/group.html}
		}
	\and
	Patrick Mehlitz%
	\thanks{%
		Brandenburgische Technische Universit\"at Cottbus--Senftenberg,
		Institute of Mathematics,
		03046 Cottbus,
		Germany,
		\email{mehlitz@b-tu.de},
		\url{https://www.b-tu.de/fg-optimale-steuerung/team/dr-patrick-mehlitz},
		ORCID: 0000-0002-9355-850X%
		}
	}
\date{}
\begin{document}
\maketitle
 {\bf Abstract.}
 We establish two types of estimates for generalized derivatives
 of set-valued mappings which carry the essence of two basic patterns
 observed throughout the pile of calculus rules.
 These estimates also illustrate the role of the essential
 assumptions that accompany these two patters, namely
 {\em calmness} on the one hand and {\em (fuzzy) inner calmness*} on the other.
 Afterwards, we study the relationship between
 and sufficient conditions for the various notions of (inner) calmness.
 The aforementioned estimates are applied in order to recover 
 several prominent calculus rules for tangents and normals as well as generalized 
 derivatives of marginal functions and compositions as well as Cartesian products 
 of set-valued mappings under mild conditions.
 We believe that our enhanced approach 
 puts the overall generalized calculus into some other light.
 Some applications of our findings are presented which exemplary address necessary optimality
 conditions for minimax optimization problems as well as the calculus related to
 the recently introduced semismoothness* property.
 
\medskip
{\bf Key words.}
	Calculus,
	Calmness, 
	Generalized Differentiation, 
	Inner Calmness*, 
	Set-Valued Analysis, 
	Variation Analysis

\medskip
{\bf AMS Subject classification.}
49J52, 49J53, 90C31

\section{Introduction}\label{sec:introduction}

Mathematical analysis as a whole revolves around the invention of differentiation,
which has been gradually extended to plenty of more intricate environments,
such as (possibly infinite-dimensional) spaces of various topological structure
(Banach spaces, locally convex spaces, etc.).
On the other hand, even finite-dimensional optimization,
besides numerous more involved areas of interest,
clearly shows that the objects which have to be analyzed are not restricted
to single-valued smooth mappings.
The need to extend differentiation beyond this setting is,
arguably, most successfully implemented by convex analysis,
particularly, by the convex subdifferential,
see the standard monograph of Rockafellar \cite{Ro70}.
Nevertheless, the convex framework is also too limited and not suitable
at times, as can be seen, for instance, when equilibrium problems are under consideration.
This gives rise to what we call {\em generalized differentiation},
which requires no convexity at all.

In order to be able to apply such generalized differentiation
in a reasonable manner, basic {\em calculus rules} for dealing
with, e.g., compositions and sums of functions should be available.
Perhaps the most natural extension of the convex subdifferential,
the so-called {\em regular (or Fr\'{e}chet) subdifferential}, however,
does not obey suitable rules, see \cite{MordukhovichNamYen2006}, and the same applies
to its primal counterpart, the so-called {\em subderivative}.
Among many attempts to overcome this, one of the most prominent
has been achieved by Mordukhovich \cite[Section~4]{Mordukhovich1976},
who introduced the {\em limiting subdifferential} which comes with
full calculus at hand.

Ever since, the limiting subdifferential as well as the related normal cone
and the associated coderivative have occupied a central place of generalized differentiation
and modern variational analysis.
In particular, the limiting notions play a crucial role
in designing suitable necessary optimality conditions
as well as in the characterization of Lipschitzian behavior of set-valued mappings.
Many prominent researchers have contributed to their development,
and we refer to the two recent monographs of Ioffe \cite{Io17}
and Mordukhovich \cite{Mo18} for detailed information as well as to \cite[Section~1.4]{Mo06}
for a brief sketch visualizing the historical development of generalized differentiation.
Using the recently introduced {\em directional} limiting constructions,
see \cite{GinchevMordukhovich2011,Gfr13a},
the underlying analysis can be refined significantly,
as demonstrated by numerous works authored or co-authored by
Gfrerer \cite{BeGfrOut19,BeGfrOut18,Gfrerer2014,GfrKl16,GfrOut16,GfrOut2019}.

Speaking of calculus rules, we address not only the results
concerning subderivatives and subdifferentials of functions,
but also the parallel results for tangents and normals to sets
on the one hand, and those for graphical derivatives and coderivatives
of set-valued mappings on the other.
Throughout the whole calculus, however, we observe that most of the rules
follow one (or both) of {\em two basic patterns}.
Moreover, each of these two calculus patterns comes with a characteristic
essential assumption given by a certain (Lipschitz) continuity-type requirement for
a suitable surrogate set-valued mapping.

The first pattern, represented e.g.\ by the \emph{intersection rule},
is more prominent and, thus, also much more studied.
In the monographs \cite{RoWe98,Mo06}, the essential assumption for this pattern
is the so-called \emph{Aubin property}, which is perhaps the most famous
extension of Lipschitz continuity to set-valued mappings and
which can be expressed via the so-called \emph{Mordukhovich criterion},
see \cref{Sec:SC} for details.
Later it has been discovered that the Aubin property can be relaxed to
{\em calmness}, see \cite{HenJouOut02,HenOut05,IofOut08},
which is an {\em outer} Lipschitzian concept in its nature,
related to outer semicontinuity.
We would like to mention \cite[Proposition~4.1]{GfrOut16},
which presents a rather general, calculus-like estimate.
It seems that this result contains the essence of this pattern
and that it demonstrates the role of calmness best.
Indeed, unlike the standard calculus rules,
where the focus is on the estimate and calmness is just a tool
that enables it, \cite[Proposition~4.1]{GfrOut16}
answers the opposite question, namely: 
``Having a calm mapping, what can be said about its generalized derivatives?''
Note that \cite[Proposition~4.1]{GfrOut16} in fact deals with
metric subregularity, which is the ``inverse equivalent'' of calmness,
just like metric regularity is the inverse equivalent of the Aubin property,
see \cite[Chapter 3]{DoRo14}.
For more information about calmness and metric subregularity, we further refer the reader to \cite{DoRo04,Iof79a,Io17,Mo18}.

The second pattern, represented e.g. by the \emph{sum rule} (for sets),
is certainly less developed. Here, it turns out that the essential role
is played by conditions of {\em inner-type}.
Indeed, the estimates for the limiting constructions
are known to hold under {\em inner semicontinuity} and {\em inner semicompactness},
see \cite{IofPen96,Mo06}.
Moreover, the estimates for primal and directional limiting objects
were recently shown to hold under {\em inner calmness} and {\em inner calmness*}
in \cite{BeGfrOut19,Be19}.
While inner calmness* was newly defined in \cite{Be19},
inner calmness can be found under other names,
such as, e.g., \emph{Lipschitz lower semicontinuity}, see \cite{KlKum02,KlKu15}, or 
\emph{recession with linear rate}, see \cite{CiFaKru18,Io17}, as well.
We refer to \cite{CiFaKru18} for a comprehensive overview of this and
related notions.
Let us also mention the stronger concept of
Lipschitz lower semicontinuity*,
recently introduced in \cite{CaGiHePa19}
and motivated by a relaxation of the Aubin property
from earlier works of Klatte \cite{Kl87,Kl94}.
We point out that calmness as well as inner calmness*
are automatically satisfied by polyhedral set-valued mappings,
see \cite[Proposition~1]{Rob81} and \cite[Theorem 3.4]{Be19}.

The aim of this paper is two-fold.
First, we want to establish an inner-type analogue of \cite[Proposition~4.1]{GfrOut16}
and then show how these two results, proposed in \cref{The : IC*calc,The : Ccalc},
translate into several standard calculus rules.
Particularly, the chain rule for set-valued mappings demonstrates this nicely since both
patterns are used for its derivation.
We always consider the primal construction (based on tangents) as well as the three dual ones 
(based on regular, limiting, and directional limiting normals), and we even obtain some new estimates
for primal as well as regular dual objects under {\em fuzzy inner calmness*},
a relaxation of inner calmness*.
The second purpose of the paper is to clarify the relations between the various concepts of
(inner) calmness we are going to exploit in our analysis.
Particularly, we focus on verifiable sufficient conditions for these properties.
Alongside the Aubin property, we utilize the {\em isolated calmness} property
and the \emph{first-order sufficient condition for calmness} (FOSCclm for short).
These three conditions can be neatly expressed via generalized derivatives,
namely the limiting coderivative, graphical derivative,
and directional limiting coderivative, respectively.
The inverse equivalents of isolated calmness and FOSCclm
are strong metric subregularity, see \cite{DoRo14}, and Gfrerer's 
first-order sufficient condition for metric subregularity (FOSCMS for short), respectively, see \cite{Gfr13a}, \cite[Corollary 1]{GfrKl16}, and
\cite[Section~2.2]{GfrOut16}.
We note that sufficient conditions for calmness or metric subregularity
have been intensively studied in the literature, see e.g.\
\cite{BenkoCervinkaHoheisel2019,FabHenKruOut10,HenJouOut02,HenOut05,IofOut08,YeZhou2018}. For calmness notions of inner-type, the obtained results are new.
Particularly, in \cref{The:Equiv_GD_IC*fuzzy}, we show that fuzzy inner calmness* is in fact equivalent
to validity of the corresponding calculus estimate for primal objects.

The previous discussion suggests that our findings have the potential to be useful in numerous situations.
Nevertheless, we only detail two interesting applications.
The first one shows how the new rules regarding the regular normal cone
can be utilized to infer optimality conditions for hierarchical optimization
problems of minimax-type, see \cite{Danskin1966,DemyanovMalozemov1974}.
Similarly, these estimates can be used for the derivation of new optimality conditions
for equilibrium-constrained mathematical programs, see e.g.\ \cite{AdamHenrionOutrata2018,GfrererYe2017,GfrererYe2020}.
The second example employs the estimates for directional limiting coderivatives
to study the newly introduced property of \emph{semismoothness*}
which is instrumental in the construction of Newton-type methods for
the numerical solution of generalized equations, see \cite{GfrOut2019}.
 
The remainder of the manuscript is organized as follows: 
In \cref{sec:preliminaries}, we recall the underlying concepts from set-valued and variational 
analysis which are utilized in the paper. \Cref{Sec:Main} is dedicated to the conception of the two aforementioned calculus patterns.
Afterwards, we study the relationship between the various exploited calmness-type conditions and sufficient conditions for their validity in \cref{Sec:SC}.
\Cref{sec:recovering_calculus} provides a collection of standard calculus rules,
where we elaborate on Cartesian products of set-valued mappings in more detail.
\Cref{sec:applications} deals with the aforementioned applications.
The paper closes with the aid of some concluding remarks in \cref{sec:conclusions}.

\section{Preliminaries}\label{sec:preliminaries}

In this section, we provide the essentials of variational
analysis and generalized differentiation which can be found in the monographs \cite{AubinFrankowska2009,Mo06,Mo18,RoWe98}
and the paper \cite{BeGfrOut19}.

\subsection{Basic Concepts from Variational Analysis}\label{sec:basic_notation}

We use $\N$ and $\R$ to denote the sets of natural and real
numbers, respectively.
Throughout the paper, we equip $\R^n$, $n\in\N$, with the 
Euclidean inner product $\skalp{\cdot,\cdot}\colon\R^n\times\R^n\to\R$ and the
Euclidean norm $\norm{\cdot}\colon\R^n\to\R$.
The associated unit ball and unit sphere will be denoted by $\B:=\{z\in\R^n\,|\,\norm{z}\leq 1\}$
and $\Sp:=\{z\in\R^n\,|\,\norm{z}=1\}$, respectively.
For a non-empty set $\Omega\subset\R^n$, we exploit
\[
	\Omega^\circ:=\{z^*\in\R^n\,|\,\skalp{z^*,z}\leq 0\,\forall z\in\Omega\}
\]
in order to represent the polar cone of $\Omega$ which is always a closed, convex
cone. For arbitrary $\bar z\in\R^n$, we set $\bar z+\Omega=\Omega+\bar z:=\{\bar z+z\,|\,z\in\Omega\}$
for brevity of notation. The distance function $\dist(\cdot,\Omega)\colon\R^n\to\R$ 
of the set $\Omega$ is given by
\[
	\dist(z,\Omega):=\inf\limits_{z^{\prime}\in\Omega}\norm{z-z^{\prime}}\quad\forall z\in\R^n.
\]

Let $\Omega\subset\R^n$ be a set which is locally closed around $\zb\in\Omega$.
The tangent (or Bouligand, contingent) cone to $\Omega$ at $\zb$ is given by
\[
	T_{\Omega}(\zb)
	:=
	\{
		w\in \R^n\,|\, \exists (t_k)\downarrow 0,\,\exists (w_k)\to w\colon\,
		\zb+t_kw_k\in\Omega \, \forall k\in\N
	\}.
\]
Furthermore, we use
\begin{equation*}
	\widehat N_{\Omega}(\zb)
	:=
	T_{\Omega}(\zb)^\circ
\end{equation*}
in order to represent the so-called regular (or Fr\'{e}chet) normal cone to $\Omega$ at $\zb$.
We refer to
\[
	N_{\Omega}(\zb)
	:=
	\{
		z^* \in\R^n\,|\, \exists (z_k)\to \zb,\,\exists (z_k^*)\to z^*\colon\,
		 z_k^*\in \widehat N_{\Omega}(z_k)\,\forall k\in\N
	\}
\]
as the limiting (or Mordukhovich) normal cone to $\Omega$ at $\bar z$.
By definition, $\widehat{N}_\Omega(\bar z)\subset N_\Omega(\bar z)$ holds.
In case where $\Omega$ is convex, we have the relations
\[
	\widehat N_\Omega(\bar z)
	=
	N_\Omega(\bar z)
	=
	(\Omega-\bar z)^\circ,
\]
i.e., the regular and limiting normal cone to $\Omega$ at $\bar z$ amount to the
classical normal cone in the sense of convex analysis.

Finally, given a direction $w\in \R^n$, we denote by
\[
	N_\Omega(\zb;w)
	:=
	\left\{
		z^* \in\R^n\,\middle|\,
		\exists (t_k)\downarrow 0, \exists(w_k)\to w, \exists(z_k^*)\to z^*\colon\,
		 z_k^* \in \widehat N_{\Omega}(\zb+t_kw_k)\, \forall k\in\N
	\right\}
\]
the directional limiting normal cone to $\Omega$ at $\zb$ in direction $w$.
This comparatively new variational object has been introduced in
\cite{Gfr13a,GinchevMordukhovich2011}. In \cite{BeGfrOut19}, main calculus
rules for directional limiting normals are discussed, see \cite[Section~4]{Be19}
and \cite[Section~3]{YeZhou2018} as well.
Amongst others, let us mention that $N_\Omega(\bar z;w)=\varnothing$ holds for
all $w\notin T_\Omega(\bar z)$. Moreover, we have
\[
	N_\Omega(\bar z)
	=
	\widehat N_\Omega(\bar z)
	\cup
	\bigcup\limits_{w\in T_\Omega(\bar z)\cap\Sp}
	N_\Omega(\bar z;w),
\]
and $N_\Omega(\bar z;0)=N_\Omega(\bar z)$ is obvious.

For each $\tilde z\notin\Omega$ and arbitrary $w\in\R^n$, we set
$T_\Omega(\tilde z):=\varnothing$ as well as
$\widehat N_\Omega(\tilde z)=N_\Omega(\tilde z)=N_\Omega(\tilde z;w):=\varnothing$
for completeness.

Let us now mention two elementary results for the calculus of 
tangents and normals addressing changes of coordinates and
product structures.

\begin{lemma}\label{lem:change_of_coordinates}
 Let $g\colon\R^n\to\R^m$ be a continuously differentiable function
 and let $C\subset\R^m$ be locally closed around $g(x)\in C$ for some fixed $x\in\R^n$.
 Finally, assume that the Jacobian $\nabla g(x)$ possesses full row rank $m$.
 Then we have
 \begin{align*}
 	T_{g^{-1}(C)}(x)&=\left\{u\,\mid\,\nabla g(x)u\in T_C(g(x))\right\},\\
 	\widehat N_{g^{-1}(C)}(x)&=\bigl\{\nabla g(x)^\top y\,\mid\,y\in\widehat N_C(g(x))\bigr\},\\
 	N_{g^{-1}(C)}(x)&=\bigl\{\nabla g(x)^\top y\,\mid\,y\in N_C(g(x))\bigr\},\\
 	N_{g^{-1}(C)}(x;u)&=
 			\bigl\{\nabla g(x)^\top y\,\mid\,y\in N_C(g(x);\nabla g(x)u)\bigr\}
 			.
 \end{align*}
\end{lemma}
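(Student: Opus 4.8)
The plan is to exploit the surjectivity encoded in the full row rank condition on $\nabla g(x)$. Since $\nabla g(x)\colon\R^n\to\R^m$ is onto, the classical Lyusternik--Graves theorem (equivalently, the inverse/implicit function theorem after selecting $m$ suitable coordinates) guarantees that $g$ is metrically regular at $x$: there are a constant $\kappa>0$ and neighborhoods of $x$ and $g(x)$ on which $\dist(z,g^{-1}(y))\leq\kappa\norm{g(z)-y}$. This estimate, which lets me convert approximate solvability of $g(z)=y$ into exact preimages near $x$, is the analytic engine behind the three ``$\supseteq$'' inclusions; the reverse inclusions are comparatively soft. I would prove the four identities in the displayed order, each one feeding into the next, and I would record at the outset that full row rank is an open condition, so $\nabla g(z)$ retains full row rank, and $\nabla g(z)\nabla g(z)^\top$ stays invertible, for all $z$ near $x$.

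For the tangent cone, the inclusion $T_{g^{-1}(C)}(x)\subseteq\{u\,\mid\,\nabla g(x)u\in T_C(g(x))\}$ is immediate from the definition: for $u\in T_{g^{-1}(C)}(x)$ with witnesses $t_k\downarrow 0$ and $u_k\to u$, a first-order expansion of $g$ shows $(g(x+t_ku_k)-g(x))/t_k\to\nabla g(x)u$ while $g(x+t_ku_k)\in C$, giving $\nabla g(x)u\in T_C(g(x))$; this uses only differentiability. For ``$\supseteq$'', I would take $u$ with $v:=\nabla g(x)u\in T_C(g(x))$, pick witnesses $t_k\downarrow0$, $v_k\to v$ with $g(x)+t_kv_k\in C$, and observe that $g(x+t_ku)=g(x)+t_kv+\oo(t_k)=(g(x)+t_kv_k)+\oo(t_k)$. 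Metric regularity then yields points $\xi_k\in g^{-1}(g(x)+t_kv_k)\subseteq g^{-1}(C)$ with $\norm{\xi_k-(x+t_ku)}=\oo(t_k)$, so $u_k:=(\xi_k-x)/t_k\to u$ and $x+t_ku_k\in g^{-1}(C)$, proving $u\in T_{g^{-1}(C)}(x)$.

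The regular normal cone identity is then a purely linear-algebraic polarity statement: writing $A:=\nabla g(x)$ and $K:=T_C(g(x))$, the first formula reads $T_{g^{-1}(C)}(x)=A^{-1}(K)$, and since $\widehat N_{g^{-1}(C)}(x)=T_{g^{-1}(C)}(x)^\circ$ and $\widehat N_C(g(x))=K^\circ$, I only need $(A^{-1}(K))^\circ=A^\top(K^\circ)$ for surjective $A$ and a cone $K$. The inclusion ``$\supseteq$'' is a one-line computation via $\skalp{A^\top y,u}=\skalp{y,Au}$. For ``$\subseteq$'', I would note that $\ker A\subseteq A^{-1}(K)$ because $0\in K$, so any $z^*\in(A^{-1}(K))^\circ$ annihilates the subspace $\ker A$, whence $z^*\in\range A^\top$ and $z^*=A^\top y$ with $y$ unique by injectivity of $A^\top$; surjectivity of $A$ finally forces $y\in K^\circ$. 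This argument needs no convexity of $K$, which matters since tangent cones are generally nonconvex.

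For the limiting and directional limiting normal cones I would pass to the limit in the regular formula, now applied at points $z$ near $x$ (legitimate by the openness of the full-rank condition). Given a limiting normal $z^*$ with witnesses $z_k\to x$ and $z_k^*\to z^*$, $z_k^*\in\widehat N_{g^{-1}(C)}(z_k)$, the regular formula gives $y_k\in\widehat N_C(g(z_k))$ with $z_k^*=\nabla g(z_k)^\top y_k$; solving explicitly, $y_k=(\nabla g(z_k)\nabla g(z_k)^\top)^{-1}\nabla g(z_k)z_k^*\to y$, and $g(z_k)\to g(x)$ together with $y_k\to y$ places $y\in N_C(g(x))$, while $z^*=A^\top y$. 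Conversely, given $y\in N_C(g(x))$ with witnesses $c_k\to g(x)$, $c_k\in C$, $y_k\to y$, $y_k\in\widehat N_C(c_k)$, openness of $g$ at $x$ furnishes $z_k\to x$ with $g(z_k)=c_k\in C$, and the regular formula at $z_k$ turns $y_k$ into $\nabla g(z_k)^\top y_k\to A^\top y$, proving $A^\top y\in N_{g^{-1}(C)}(x)$. The directional version runs along identical lines with $z_k=x+t_kw_k$: the only extra bookkeeping is to track directions, noting that $z_k\in g^{-1}(C)$ forces $(g(z_k)-g(x))/t_k\to Au$, so the produced normals live in $N_C(g(x);Au)$, and conversely metric regularity must be applied with rate control so that the constructed $w_k=(z_k-x)/t_k$ genuinely converge to $u$. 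I expect this rate control in the ``$\supseteq$'' directional inclusion---extracting preimages $z_k$ of $c_k=g(x)+t_kv_k$ with $\norm{z_k-(x+t_ku)}=\oo(t_k)$ rather than merely $\OO(t_k)$---to be the main technical obstacle, but it follows from the metric regularity estimate combined with $v_k\to Au$.
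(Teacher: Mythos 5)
Your proof is correct, but it takes a more self-contained route than the paper. The paper disposes of the first three formulas by citing \cite[Exercise~6.7]{RoWe98} and proves only the directional formula, reducing it to the square case $n=m$ (where $\nabla g$ is invertible on a neighborhood) and invoking the same exercise's augmentation trick for $n>m$; in contrast, you prove everything from scratch: Lyusternik--Graves metric regularity drives the hard inclusions for tangents (and the rate control $\norm{\xi_k-(x+t_ku)}=\oo(t_k)$ you flag in the directional case is indeed exactly what the metric regularity estimate delivers, since $\norm{g(x+t_ku)-(g(x)+t_kv_k)}=t_k\norm{\nabla g(x)u-v_k}+\oo(t_k)=\oo(t_k)$); the regular-normal identity follows from your polarity lemma $(A^{-1}(K))^\circ=A^\top(K^\circ)$ for surjective $A$, whose nonconvex validity you correctly justify via $\ker A\subset A^{-1}(K)$ and $z^*\in\range A^\top$; and the limiting/directional formulas follow by applying the regular formula at nearby points $z_k$ (legitimate since full row rank is open) and recovering $y_k=(\nabla g(z_k)\nabla g(z_k)^\top)^{-1}\nabla g(z_k)z_k^*$ explicitly, which gives the needed convergence $y_k\to y$ without passing to the invertible square case. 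This last limiting argument is the same idea the paper sketches for its directional formula, but you execute it directly for rectangular full-rank Jacobians, avoiding the coordinate-augmentation step, at the cost of redoing material the paper outsources to \cite{RoWe98}. Two cosmetic points: in the directional ``$\subset$'' inclusion it is differentiability of $g$, not membership $z_k\in g^{-1}(C)$, that forces $(g(z_k)-g(x))/t_k\to\nabla g(x)u$ (membership is what makes $g(z_k)\in C$ a valid witness); and for the limiting ``$\supset$'' direction your appeal to openness is fine but unnecessary, since the metric regularity estimate $\dist(x,g^{-1}(c_k))\leq\kappa\norm{g(x)-c_k}$ already yields the preimages $z_k\to x$ quantitatively, and only the elementary inclusion $\nabla g(z_k)^\top\widehat N_C(g(z_k))\subset\widehat N_{g^{-1}(C)}(z_k)$ (no full rank at $z_k$ needed) is used there.
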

\begin{proof}
	The formulas for tangents as well as regular and limiting normals follow from
	\cite[Exercise~6.7]{RoWe98}. It remains to verify the formula for directional
	limiting normals. 
	
	In case $n=m$, the regularity of $\nabla g(x)$ extends to a neighborhood $U$ of $x$
	where these matrices are continuously invertible. Using now the available formula
	for the regular normal cone to $g^{-1}(C)$ for all points from $g^{-1}(C)\cap U$, 
	we easily infer the desired result from the definition of
	the directional limiting normal cone. In case $n>m$, we exploit the trick from \cite[Exercise~6.7]{RoWe98} in order to deduce the more general statement from the former
	arguments.	
\end{proof}

\begin{lemma}\label{lem:cartesian_products}
 	For natural numbers $n_i\in\N$, $i=1,\ldots,\ell$,
 	sets $C_i\subset\R^{n_i}$, points $x_i\in C_i$
 	where $C_i$ is locally closed, and directions $u_i\in\R^{n_i}$, we have
 	\begin{align*}
 		T_{\prod_{i=1}^\ell C_i}(x_1,\ldots,x_\ell)
 		&\subset
 		\mathsmaller\prod\nolimits_{i=1}^\ell T_{C_i}(x_i),\\
 		\widehat N_{\prod_{i=1}^\ell C_i}(x_1,\ldots,x_\ell)
 		&=
 		\mathsmaller\prod\nolimits_{i=1}^\ell \widehat N_{C_i}(x_i),\\
 		N_{\prod_{i=1}^\ell C_i}(x_1,\ldots,x_\ell)
 		&=
 		\mathsmaller\prod\nolimits_{i=1}^\ell N_{C_i}(x_i),\\
 		N_{\prod_{i=1}^\ell C_i}((x_1,\ldots,x_\ell);(u_1,\ldots,u_\ell))
 		&\subset
 		\mathsmaller\prod\nolimits_{i=1}^\ell N_{C_i}(x_i;u_i).
 	\end{align*}
\end{lemma}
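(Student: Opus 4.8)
The plan is to establish the four relations in the order listed, observing that the equality for regular normals is the linchpin from which both the limiting and the directional limiting formulas follow almost immediately. Throughout, write $x=(x_1,\dots,x_\ell)$ and $u=(u_1,\dots,u_\ell)$.

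First, for the tangent cone inclusion I would argue directly from the definition: if $w=(w_1,\dots,w_\ell)\in T_{\prod_{i=1}^\ell C_i}(x)$, there are sequences $(t_k)\downarrow 0$ and $(w^k)\to w$ with $x+t_kw^k\in\prod_{i=1}^\ell C_i$ for all $k$. Reading this componentwise yields $x_i+t_kw^k_i\in C_i$ and $w^k_i\to w_i$, so $w_i\in T_{C_i}(x_i)$ for every $i$, whence $w\in\prod_{i=1}^\ell T_{C_i}(x_i)$. The reverse inclusion fails in general, since the approximating sequences for different factors need not share a common rate $(t_k)$.

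Second, and this is the technical heart of the statement, I would prove the equality $\widehat N_{\prod_{i=1}^\ell C_i}(x)=\prod_{i=1}^\ell\widehat N_{C_i}(x_i)$ directly from $\widehat N_\Omega(\zb)=T_\Omega(\zb)^\circ$. The inclusion ``$\supset$'' is purely conic: polarity reverses the tangent inclusion from the previous step, giving $\bigl(\prod_i T_{C_i}(x_i)\bigr)^\circ\subset T_{\prod_i C_i}(x)^\circ$, and since the polar of a Cartesian product of cones equals the product of the polars (each factor contains $0$, so the coordinates decouple), the left-hand side is exactly $\prod_i\widehat N_{C_i}(x_i)$. For ``$\subset$'' I would use a lifting argument: given $w_i\in T_{C_i}(x_i)$, the vector carrying $w_i$ in slot $i$ and zeros elsewhere lies in $T_{\prod_i C_i}(x)$, as one keeps the other components fixed along the same $(t_k)$. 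Testing any $z^*\in T_{\prod_i C_i}(x)^\circ$ against these lifted directions forces $\skalp{z^*_i,w_i}\le 0$ for all such $w_i$, that is $z^*_i\in\widehat N_{C_i}(x_i)$ for each $i$.

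Third, the limiting-normal equality would follow by feeding the regular-normal formula through the limiting definition. For ``$\subset$'', any $z^*\in N_{\prod_i C_i}(x)$ arises as a limit of $z^{*k}\in\widehat N_{\prod_i C_i}(z^k)$ with $z^k\to x$; applying the regular product formula at each $z^k$ and passing to the limit componentwise gives $z^*_i\in N_{C_i}(x_i)$. For ``$\supset$'', I would take the componentwise defining sequences for each $z^*_i\in N_{C_i}(x_i)$, concatenate them into a single sequence $(z^k,z^{*k})$, and invoke the regular product formula once more to certify $z^{*k}\in\widehat N_{\prod_i C_i}(z^k)$. Finally, the directional inclusion is a direct transcription: from $z^*\in N_{\prod_i C_i}(x;u)$ with defining data $(t_k)\downarrow 0$, $w^k\to u$, $z^{*k}\to z^*$, and $z^{*k}\in\widehat N_{\prod_i C_i}(x+t_kw^k)$, the regular product formula splits $z^{*k}$ coordinatewise, and since the \emph{same} $(t_k)$ serves all factors simultaneously, each coordinate sequence witnesses $z^*_i\in N_{C_i}(x_i;u_i)$. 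The only point requiring care is exactly this shared scaling parameter: a common rate always exists when a product direction is split, which is what turns the regular- and limiting-normal statements into equalities, but it cannot in general be manufactured when assembling one, which is why the tangent and directional statements remain mere inclusions.
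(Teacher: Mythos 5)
Your proof is correct, but it takes a genuinely different route from the paper: the paper's entire proof of this lemma consists of two citations, namely \cite[Proposition~6.41]{RoWe98} for the equalities concerning regular and limiting normals and \cite[Proposition~3.3]{YeZhou2018} for the tangent and directional limiting inclusions, whereas you give a self-contained argument. Your shortcut is legitimate precisely because this paper \emph{defines} $\widehat N_\Omega(\zb):=T_\Omega(\zb)^\circ$; with that definition, the regular-normal equality reduces to the elementary polarity facts you use (the polar of a product of cones containing the origin is the product of the polars, plus the single-slot lifting of tangent directions), and the limiting equality and directional inclusion then follow by splitting or pairing the defining sequences. In \cite{RoWe98} the regular normal cone is instead defined via the inequality $\skalp{z^*,z-\zb}\leq \oo(\norm{z-\zb})$, so their Proposition~6.41 must do genuinely more work than your polarity computation; your approach buys independence from the literature and transparency about exactly where equality is lost. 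One small imprecision in your closing remark: for the limiting-normal equality no rate parameter is involved at all --- your concatenation works simply because the two defining sequences can be paired by their common index $k$ --- so the ``shared scaling parameter'' explanation is only the correct diagnosis for the directional statement (and, dually, for the tangent one), not the mechanism behind the limiting equality. Everything else, including the observation that the reverse tangent inclusion fails for lack of a common $(t_k)$ and that the same obstruction blocks equality in the directional formula, is accurate.
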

\begin{proof}
	The formulas for regular and limiting normals are stated in
	\cite[Proposition~6.41]{RoWe98}.
	For the remaining inclusions, we refer the reader to \cite[Proposition~3.3]{YeZhou2018}.
\end{proof}

In case where the sets under consideration possess additional regularity
properties, it is even possible to obtain equality in the formulas for
tangents and directional limiting normals
in \cref{lem:cartesian_products}, see \cite[Section~3]{YeZhou2018}.
Particularly, let us mention that equality is obtained in these formulas
if the product of two sets is considered and one of them 
is convex or equals the graph of a continuously differentiable mapping.

Next, we recall the basic notions of generalized differentiation.
For a set-valued mapping $M\colon\R^m\tto\R^n$, 
$\dom M:=\{y\in\R^m\,|\,M(y)\neq \varnothing\}$
and
$\gph M:=\{(y,x)\in\R^m\times\R^n\,|\,x\in M(y)\}$
denote the domain and the graph of $M$, respectively.
The mapping $M^{-1}\colon\R^n\tto\R^m$ given by 
$M^{-1}(x):=\{y\in\R^m\,|\,x\in M(y)\}$ for all $x\in\R^n$ is referred
to as the inverse of $M$.

Let $(\yb,\xb)\in\gph M$ be a point where
$\gph M$ is locally closed.
In this paper, we will consider four types of generalized derivatives
of $M$ corresponding to the four different cones from above applied to $\gph M$ at $(\yb,\xb)$.
The set-valued mapping $D M(\yb,\xb)\colon\R^m\tto\R^n$, defined by
\[
	DM(\yb,\xb)(v)
	:= 
	\left\{
		u\in\R^n\,\middle|\,(v,u)\in T_{\gph M}(\yb,\xb)
	\right\}
	\quad 
	\forall v\in\R^m,
\]
is called the graphical (or Bouligand) derivative of $M$ at $(\yb,\xb)$.
We refer to the mapping $\widehat D^* M(\yb,\xb)\colon\R^n\tto\R^m$, given by
 \[
 	\widehat D^*M(\yb,\xb)(x^*)
 	:=
 	\left\{
 		y^*\in \R^m\,\middle|\, (y^*,- x^*)\in \widehat N_{\gph M}(\yb,\xb )
 	\right\}
 	\quad
 	\forall x^*\in \R^n,
 \]
as the regular (or Fr\'{e}chet) coderivative of $M$ at $(\yb,\xb)$.
The mapping $D^* M(\yb,\xb)\colon\R^n\tto\R^m$, defined via
\[ 
	D^* M(\yb,\xb)(x^*)
	:=
	\left\{
		y^*\in \R^m \,\middle|\,
		(y^*,- x^*)\in N_{\gph M}(\yb,\xb)
	\right\}
	\quad
	\forall x^*\in \R^n,
\]
is referred to as the limiting (or Mordukhovich) coderivative of $M$ at $(\yb,\xb)$.
Finally, given a pair of directions $(v,u)\in\R^m\times\R^n$, the
mapping $D^* M((\yb,\xb);(v,u))\colon\R^n\tto\R^m$, given by
\[
	D^* M((\yb,\xb);(v,u))(x^*)
	:=
	\left\{
		y^* \in \R^m\,\middle|\,(y^*,-x^*)\in N_{\gph M}((\yb,\xb);(v,u)) 
	\right\}
	\quad
	\forall x^*\in \R^n,
\]
is called the directional limiting coderivative of $M$ in direction $(v,u)$ at $(\yb,\xb)$.
For a single-valued mapping $F\colon\R^m\to\R^n$ and some point $\yb\in\R^m$,
we denote the above derivatives without $\xb=F(\yb)$ for brevity.
In case where $F$ is continuously differentiable at $\yb$, we obtain the relations
$DF(\yb)(v)=\nabla F(\yb)v$ as well as $\widehat{D}^*F(\yb)(x^*)=D^*F(\yb)(x^*)=\nabla F(\yb)^\top x^*$
for arbitrary $v\in\R^m$ and $x^*\in\R^n$.
Particularly, we have $D^*F(\yb;(v,u))(x^*)\neq\varnothing$ if and only if $u=\nabla F(\yb)v$ holds.
In this case, $D^*F(\yb;(v,u))(x^*)=\nabla F(\yb)^\top x^*$ follows.

For an extended real-valued function $f\colon\R^n\to\overline\R$, 
$\epi f:=\{(x,\alpha)\in\R^n\times\R\,|\,f(x)\leq\alpha\}$ denotes its
epigraph. It is well known that $f$ is lower semicontinuous at some point $\bar x\in\R^n$
satisfying $|f(\bar x)|<\infty$ if and only if $\epi f$ is locally closed at $(\bar x,f(\bar x))$.
Fix such a point $\bar x\in\R^n$.
For the purposes of this paper, we avoid the standard definitions
of generalized derivatives of $f$ but introduce them rather
via their characterizations in terms of tangents and normals
to $\epi f$.
The extended real-valued function $\mathrm df(\bar x)\colon\R^n\to\overline\R$, characterized by
\[
	\epi \mathrm df(\bar x) =
	T_{\epi f}(\xb,f(\xb)),
\]
is called the subderivative of $f$ at $\bar x$.
Furthermore, we define the regular and limiting subdifferential of $f$ at $\bar x$ by
means of
\begin{align*}
	\widehat\partial f(\bar x)
	&:=
	\left\{
		x^*\in\R^n\,\middle|\,(x^*,-1)\in\widehat N_{\epi f}(\bar x,f(\bar x))
	\right\},\\
	\partial f(\bar x)
	&:=
	\left\{
		x^*\in\R^n\,\middle|\,(x^*,-1)\in N_{\epi f}(\bar x,f(\bar x))
	\right\},
\end{align*}
respectively. Finally, for each pair $(u,\mu)\in\R^n\times\R$, we refer to
\begin{align*}
    \partial f(\bar x;(u,\mu))
	& := 
	\left\{
		x^*\in\R^n\,\middle|\,(x^*,-1)\in N_{\epi f}((\bar x,f(\bar x));(u,\mu))
	\right\},\\
	\partial^{\infty} f(\bar x;(u,\mu))
	& := 
	\left\{
		x^*\in\R^n\,\middle|\,(x^*,0)\in N_{\epi f}((\bar x,f(\bar x));(u,\mu))
	\right\}
\end{align*}
as the directional limiting subdifferential of $f$ at $\bar x$ in direction $(u,\mu)$
and the singular directional limiting subdifferential of $f$ at $\bar x$ 
in direction $(u,\mu)$, respectively.
For a connection to generalized derivatives of set-valued mappings,
we refer to \cite[Theorem~8.2]{RoWe98}.

\subsection{Stability Notions for Set-Valued Mappings}\label{sec:stability_notions}

Below, we present the definitions of the most important stability concepts
for this paper.
We note that several other related notions
are introduced and studied in \cref{Sec:SC}.
Recall that a sequence $(z_k)$
is said to converge to $z$ from a direction $w$ whenever there are
sequences $(t_k)\downarrow 0$ and $(w_k)\to w$ such that $z_k=z+t_kw_k$ holds for
all $k\in\N$. 

We start by recalling the definitions of inner semicompactness and
inner calmness*.
\begin{definition}\label{def:ICandC}
 	Let $M\colon\R^m\tto\R^n$ be a set-valued mapping and fix $\yb\in\dom M$.
 	Furthermore, let $\Omega\subset\R^m$ and as well as a
 	direction $v\in\R^m$ be arbitrarily chosen.
 	\begin{itemize}
  	\item[(i)] We say that $M$ is \emph{inner semicompact} at $\yb$ w.r.t.\ $\Omega$
  		if for each sequence $(y_k)\subset\Omega$ converging to $\yb$, we find
  		$\bar x\in\R^n$ and a sequence $(x_k)$ such that $(x_k)\to\bar x$ and
  		$x_k\in M(y_k)$ hold along a subsequence.
  		In case where this property only holds for all sequences $(y_k)$ converging
  		to $\yb$ from $v$, we call $M$ inner semicompact at $\yb$ w.r.t.\ $\Omega$
  		in direction $v$.
  	\item[(ii)] We say that $M$ is \emph{inner calm*} at $\yb$ w.r.t.\ $\Omega$ if 
  		there is some $\kappa>0$ such that for each
  		sequence $(y_k)\subset\Omega$ converging to $\yb$, we find $\bar x\in\R^n$
  		and a sequence $(x_k)$ such that $\norm{x_k-\bar x}\leq\kappa\norm{y_k-\yb}$
  		and $x_k\in M(y_k)$ hold along a subsequence.
  		The infimum $\bar\kappa$ over all constants with this property is called
  		modulus of inner calmness* at $\yb$ w.r.t.\ $\Omega$.
  		In case where the above property only holds for all sequences $(y_k)$ converging to
  		$\yb$ from $v$, we call $M$ inner calm* at $\yb$ w.r.t.\ $\Omega$ 
  		in direction $v$.
  		Similarly as above, we introduce the modulus $\bar\kappa_v$ of inner calmness*
  		at $\yb$ w.r.t.\ $\Omega$ in direction $v$.
  	\end{itemize}
  	In case where $\Omega:=\R^m$ can be chosen, we simply omit the expression ``w.r.t.\ $\R^m$''
  	for brevity.
\end{definition}

Clearly, inner calmness* of a set-valued mapping at one point of its domain is stronger
than inner semicompactness since the former provides an additional
information about the rate of convergence. Note that whenever $M\colon\R^m\tto\R^n$
is a set-valued mapping with a closed graph, then $\dom M$ does not need to be closed.
However, fixing $\yb\in\dom M$ where $M$ is inner semicompact w.r.t.\ $\dom M$ yields
closedness of $\dom M$ locally around $\yb$,
see \cite[Lemma 2.1]{Be19}.

Next, we present the definition of a relaxed version inner calmness*.
\begin{definition}\label{def:inner_calmness*_in_fuzzy_sense}
	Let $M\colon\R^m\tto\R^n$ be a set-valued mapping and fix $\yb\in\dom M$,
	a set $\Omega\subset\R^m$ which is locally closed at $\yb$, and a direction $v\in\R^m$.
	We say that $M$ is \emph{inner calm* in the fuzzy sense at $\yb$ w.r.t.\ $\Omega$ 
	in direction $v$} if either $v\notin T_\Omega(\yb)$ holds or if there is some
	$\kappa_v>0$ such that we find a sequence $(y_k)\subset\Omega$ converging to $\yb$
	from $v$, some $\bar x\in\R^n$, and a sequence $(x_k)$ such that $x_k\in M(y_k)$ and
	$\norm{x_k-\bar x}\leq\kappa_v\norm{y_k-\bar y}$ hold for all $k\in\N$.
	The infimum $\bar\kappa_v$ over all constants with this property is called modulus
	of inner calmness* in the fuzzy sense at $\yb$ w.r.t.\ $\Omega$ in direction $v$.
	For $v\notin T_\Omega(\yb)$, we set $\bar\kappa_v:=0$ for the purpose of completeness.
	We say that $M$ is inner calm* in the fuzzy sense at $\yb$ w.r.t.\ $\Omega$ provided it
	is inner calm* in the fuzzy sense at $\yb$ w.r.t.\ $\Omega$ in each direction
	$v\in\Sp$.
	In case where $\Omega:=\R^m$ can be chosen, we simply omit the expression ``w.r.t.\ $\R^m$''
  	for brevity.
\end{definition}

Inner calmness* and inner calmness* in the fuzzy sense have been first defined in \cite{Be19}.
The motivation for introducing the fuzzy inner calmness*
comes from the very natural setting of a certain multiplier mapping
which possesses this property, see \cite[Theorem 3.9]{Be19}.
It is worth noting that fuzzy inner calmness* does not
imply inner semicompactness in general.
In this paper, we extend the calculus rules based on (fuzzy) inner calmness*
from \cite[Section 4]{Be19}, particularly to estimates for regular normals.

Note that each of the ``inner'' conditions from
\cref{def:ICandC,def:inner_calmness*_in_fuzzy_sense} 
implies that the images of the underlying set-valued mapping are non-empty
for all points of the set $\Omega$ that lie near the point of interest $\yb$.
While this can be desirable in some situations, it turns out to be quite restrictive in other settings.
For our purposes, however, we will often consider these properties w.r.t.\
the domain of the mapping, and this does not add any restriction.

Finally, we would like to recall the definition of calmness. 
\begin{definition}\label{def:calmness}
	Let $M\colon\R^m\tto\R^n$ be a set-valued mapping and fix $(\yb,\xb)\in\gph M$ as well as
	a direction $u\in\R^n$. We say that $M$ is \emph{calm} at $(\yb,\xb)$ if there is some
	$\kappa>0$ such that for each sequence $(x_k)\to\xb$ satisfying $x_k\in M(y_k)$ for
	some $y_k$ and all $k\in\N$, we find a sequence $(\tilde x_k)\subset M(\yb)$ such that
	$\norm{x_k-\tilde x_k}\leq\kappa\norm{y_k-\yb}$ holds for sufficiently large $k\in\N$.
	The infimum $\bar\kappa$ over all constants with this property is called the modulus of
	calmness at $(\yb,\xb)$.
	In case where the above property only holds for all sequences $(x_k)$ converging to
	$\xb$ from $u$, we call $M$ calm at $(\yb,\xb)$ in direction $u$.
	Similarly as above, we introduce the modulus $\bar\kappa_u$ of calmness at $(\yb,\xb)$
	in direction $u$.
\end{definition}
 
One can easily check that $M$ is calm at $(\yb,\xb)\in\gph M$ if and only if there
are neighborhoods $U$ of $\xb$ and $V$ of $\yb$ as well as some constant $\kappa>0$ such that
\begin{equation}\label{eq:standard_definition_calmness}
	M(y)\cap U\subset M(\yb)+\kappa\norm{y-\bar y}\B\quad\forall y\in V
\end{equation}
holds, and this corresponds to the classical definition of calmness. 
In order to be consistent with the definition of inner calmness*,
however, we choose to work with the sequential
counterpart of the definition in this manuscript.
It is well known that the neighborhood $U$ can be reduced in such a way that $V$ can be replaced by the whole space $\R^m$, see \cite[Exercise 3H.4]{DoRo14}.
On the one hand, this leads to the equivalent formulation of calmness via
\[
	\dist(x,M(\yb)) \leq \kappa \dist(\yb,M^{-1}(x)) \quad\forall x\in U,
\]
which is precisely the definition of metric subregularity of $M^{-1}$ at $(\xb,\yb)$.
On the other hand, it motivates us to define directional calmness
via directions in the range space $\R^n$ and, in turn,
yields that calmness in a predefined direction $u$
is the inverse counterpart of metric subregularity in direction $u$,
see \cite{Gfr13a}.

In \cref{Sec:SC}, we relate the properties from
above to other prominent Lipschitzian notions.

We conclude this preliminary section with the following
simple result regarding the stability of calmness and inner calmness*
under single-valued calm perturbations.
A single-valued function $\varphi \colon \R^n \to \R^m$
is called calm at $\xb \in \R^n$ if there is a constant $\kappa>0$
such that 
\begin{equation*}
    \norm{\varphi(x_k) - \varphi(\xb)} \leq
	\kappa\norm{x_k-\xb}
\end{equation*}
holds for every sequence $(x_k) \to \xb$ and sufficiently large $k \in \N$.
\begin{proposition}\label{pro : SV_calm_perturb}
    Let $M\colon\R^m\tto\R^n$ be a set-valued mapping,
    let $\varphi_1\colon \R^\ell \to \R^r$, $\varphi_2\colon \R^\ell \to \R^m$
    and $\varphi_3\colon \R^\ell \to \R^n$ be single-valued mappings, and 
    define another set-valued mapping
    $\widetilde M\colon \R^\ell \tto \R^r \times \R^n$ as follows:
    \[
    \widetilde M(q) := \{\varphi_1(q)\} \times \big( M(\varphi_2(q)) + \varphi_3(q) \big)
    \quad\forall q\in\R^\ell.
    \]
    \begin{itemize}
        \item[(i)] If the functions $\varphi_i$, $i = 1,2,3$,
        are continuous (calm) at $\bar q$ and if $M$ is inner semicompact (inner calm*) at
        $\varphi_2(\bar q)$ w.r.t.\ $\dom M$, then $\widetilde M$
        is inner semicompact (inner calm*) at $\bar q$
        w.r.t.\ $\dom \widetilde M$.
        \item[(ii)] If the functions $\varphi_i$, $i = 1,2,3$,
        are calm at $\bar q$ and if $M$ is calm at
        $(\varphi_2(\bar q),\xb) \in \gph M$ for some $\xb$,
        then $\widetilde M$ is calm at
        $\big( \bar q,(\varphi_1(\bar q),\xb + \varphi_3(\bar q))\big) \in \gph \widetilde M$.
    \end{itemize}
\end{proposition}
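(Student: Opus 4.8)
The plan rests on the observation that $\dom\widetilde M = \varphi_2^{-1}(\dom M)$, so that feeding any sequence $(q_k)$ from $\dom\widetilde M$ through $\varphi_2$ produces a sequence in $\dom M$, to which the assumed inner/calmness property of $M$ applies. The reconstruction $p_k := (\varphi_1(q_k),\,x_k + \varphi_3(q_k))$ of image points of $\widetilde M$ from the resulting ``core'' sequence $(x_k)$, together with the (calm) continuity of $\varphi_1$ and $\varphi_3$, will then deliver the claimed inner/calmness property of $\widetilde M$. Throughout I write $\bar y := \varphi_2(\bar q)$, which lies in $\dom M$ by hypothesis, so that $\bar q \in \dom\widetilde M$.

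For part~(i), I would take a sequence $(q_k) \subset \dom\widetilde M$ converging to $\bar q$ and set $y_k := \varphi_2(q_k) \in \dom M$. Continuity of $\varphi_2$ gives $y_k \to \bar y$, so inner semicompactness of $M$ supplies $\bar x$ and a sequence $(x_k) \to \bar x$ with $x_k \in M(y_k)$ along a subsequence. Putting $\bar p := (\varphi_1(\bar q),\,\bar x + \varphi_3(\bar q))$ and $p_k := (\varphi_1(q_k),\,x_k + \varphi_3(q_k)) \in \widetilde M(q_k)$, continuity of $\varphi_1$ and $\varphi_3$ yields $p_k \to \bar p$ along that subsequence, which is inner semicompactness of $\widetilde M$. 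For the inner calm* variant the same construction is used, now with the rate estimates: calmness of $\varphi_2$ gives $\|y_k - \bar y\| \leq \kappa_2\|q_k - \bar q\|$, inner calmness* of $M$ gives $\|x_k - \bar x\| \leq \kappa\|y_k - \bar y\|$, and calmness of $\varphi_1,\varphi_3$ controls the remaining two components, so the triangle inequality (with $\|(a,b)\| \leq \|a\|+\|b\|$ on the product space) collapses everything into $\|p_k - \bar p\| \leq (\kappa_1 + \kappa\kappa_2 + \kappa_3)\|q_k - \bar q\|$.

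For part~(ii), the cleaner route is the neighborhood characterization \eqref{eq:standard_definition_calmness} rather than the sequential one, because the calmness of $\widetilde M$ quantifies over image points $p$ whose preimages $q$ need not approach $\bar q$, whereas the calmness of the $\varphi_i$ is only available near $\bar q$; passing to neighborhoods forces $q$ to lie near $\bar q$. I would fix neighborhoods $U$ of $\bar x$, $V$ of $\bar y$ and the constant $\kappa$ from calmness of $M$ at $(\bar y,\bar x)$, the calmness moduli $\kappa_1,\kappa_2,\kappa_3$ of the $\varphi_i$, and then choose neighborhoods $\widetilde V$ of $\bar q$ and $\widetilde U$ of $\bar p := (\varphi_1(\bar q),\,\bar x + \varphi_3(\bar q))$ with $\varphi_2(\widetilde V) \subset V$. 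Given $q \in \widetilde V$ and $p = (a,b) \in \widetilde M(q) \cap \widetilde U$, I extract $x := b - \varphi_3(q) \in M(\varphi_2(q))$, apply calmness of $M$ to obtain $\tilde x \in M(\bar y)$ with $\|x - \tilde x\| \leq \kappa\|\varphi_2(q) - \bar y\| \leq \kappa\kappa_2\|q-\bar q\|$, and set $\tilde p := (\varphi_1(\bar q),\,\tilde x + \varphi_3(\bar q)) \in \widetilde M(\bar q)$. The triangle inequality together with the three calm estimates then gives $\|p - \tilde p\| \leq (\kappa_1 + \kappa\kappa_2 + \kappa_3)\|q - \bar q\|$, i.e.\ calmness of $\widetilde M$ at $(\bar q,\bar p)$ with modulus $\tilde\kappa := \kappa_1 + \kappa\kappa_2 + \kappa_3$.

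The main obstacle, and essentially the only nonroutine point, is the neighborhood bookkeeping in part~(ii): one must guarantee that the extracted inner point $x = b - \varphi_3(q)$ lands inside the neighborhood $U$ on which $M$ is calm, \emph{without} assuming $q \to \bar q$. This is secured by shrinking $\widetilde U$ and $\widetilde V$ so that $b$ stays close to $\bar x + \varphi_3(\bar q)$ and $\varphi_3(q)$ close to $\varphi_3(\bar q)$, whence $x$ stays close to $\bar x$; it is exactly this step that makes the neighborhood formulation preferable to the sequential one. Everything else reduces to the triangle inequality and the elementary product-norm bound, as in part~(i).
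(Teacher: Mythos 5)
Your proposal is correct, and part (i) coincides with the paper's proof almost verbatim: the same composition $y_k=\varphi_2(q_k)$, the same reconstruction $(\varphi_1(q_k),x_k+\varphi_3(q_k))\in\widetilde M(q_k)$, the same triangle inequality, and even the same modulus $\kappa_1+\kappa\kappa_2+\kappa_3$. In part (ii) you genuinely deviate: the paper stays within its sequential definition of calmness, taking $((a_k,b_k))\to(\varphi_1(\bar q),\xb+\varphi_3(\bar q))$ with $(a_k,b_k)\in\widetilde M(q_k)$ and writing ``$(x_k)\to\xb$'' for $x_k=b_k-\varphi_3(q_k)$ as if this were automatic, before invoking calmness of $M$ and the calm estimates of the $\varphi_i$. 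As you correctly observe, this is only automatic when $q_k\to\bar q$, since the sequential definition places no constraint on the preimages $q_k$, and both the convergence $(x_k)\to\xb$ and the calm bounds on the $\varphi_i$ are available only near $\bar q$; the paper's proof tacitly glosses over this. Your neighborhood route via \eqref{eq:standard_definition_calmness} resolves the point cleanly: shrinking $\widetilde U$ and $\widetilde V$ forces $x=b-\varphi_3(q)$ into the neighborhood $U$ on which $M$ is calm, and the rest is the identical triangle-inequality computation. The remaining discrepancy between the two formulations is harmless---for indices with $q_k$ bounded away from $\bar q$ the sequential requirement holds trivially with $\tilde p_k:=\bar p$, since $p_k\to\bar p$---and this is exactly the content of the equivalence the paper itself cites from \cite[Exercise 3H.4]{DoRo14}. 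So your version buys full rigor on a step the paper leaves implicit, at the cost of a little neighborhood bookkeeping, while the paper's sequential argument is shorter but relies on that unstated reduction to $q_k\to\bar q$.
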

\begin{proof}
    The proof of the inner semicompactness claim from (i) is simpler, so we only prove
    the claim regarding inner calmness*. 
    For each $i=1,2,3$, let $\kappa_i$ be a calmness constant of $\varphi_i$, and
    let $\kappa>0$ be an inner calmness* constant of $M$ at $\varphi_2(\bar q)$
    w.r.t.\ $\dom M$.
    Furthermore, let us equip product spaces w.l.o.g. with the natural sum norms.
    In order to prove (i), consider a sequence $(q_k) \to \bar q$
    with $(q_k) \subset \dom \widetilde M$.
    This automatically yields $(\varphi_2(q_k))\subset\dom M$.
    The calmness of $\varphi_2$ at $\bar q$ guarantees $\varphi(q_k)\to\varphi(\bar q)$.
   	Thus, by inner calmness* of $M$, we find a sequence $(x_k)$ and a point $\xb$ such that
   	$x_k\in M(\varphi_2(q_k))$ and $\norm{x_k-\bar x}\leq\kappa\norm{\varphi_2(q_k)-\varphi(\bar q)}$
   	hold along a subsequence. Consequently, we find that
    \begin{align*}
        &\norm{(\varphi_1(q_k),x_k + \varphi_3(q_k)) - (\varphi_1(\bar q),\xb + \varphi_3(\bar q))}\\
        &\qquad \leq  \norm{x_k - \xb} + \norm{\varphi_1(q_k) - \varphi_1(\bar q)} 
        	+	\norm{\varphi_3(q_k) - \varphi_3(\bar q)}\\
        &\qquad \leq  \kappa\norm{\varphi_2(q_k) - \varphi_2(\bar q)}
        	+	(\kappa_1 + \kappa_3)\norm{q_k - \bar q}  \\
        &\qquad \leq  (\kappa \kappa_2 + \kappa_1 + \kappa_3)\norm{q_k - \bar q}
    \end{align*}
    holds along a subsequence.
    Since $(\varphi_1(q_k),x_k + \varphi_3(q_k)) \in \widetilde M(q_k)$ holds along a subsequence,
    the claim follows.
   
    Let us now prove (ii). 
    Therefore, let $\kappa>0$ be a calmness constant of $M$ at $(\varphi_2(\bar q),\bar x)$.
    Consider a sequence $((a_k,b_k)) \to (\varphi_1(\bar q),\xb + \varphi_3(\bar q))$ 
    with $(a_k,b_k) \in \widetilde M(q_k)$
    for some $q_k$ and all $k\in\N$, i.e., $(a_k,b_k)=(\varphi_1(q_k),x_k + \varphi_3(q_k))$
    for some $x_k \in M(\varphi_2(q_k))$ and all $k\in\N$ such that $(x_k)\to\xb$.
    The calmness of $M$ now yields the existence of a sequence 
    $(\tilde x_k) \subset M(\varphi_2(\bar q))$
    such that the above estimates with $\xb$ replaced by $\tilde x_k$ hold
    for sufficiently large $k \in \N$ .
    This completes the proof since
    $(\varphi_1(\bar q),\tilde x_k + \varphi_3(\bar q)) \in \widetilde M(\bar q)$ holds.
\end{proof}

\section{On Two Patterns in Calculus} \label{Sec:Main}

From \cref{def:ICandC,def:inner_calmness*_in_fuzzy_sense,def:calmness}, one could perhaps guess
that inner calmness* {\em w.r.t.\ the domain} in fact provides a connection
between the domain of a set-valued mapping $M\colon\R^m\tto\R^n$
and its graph, while calmness connects
the graph of $M$ with its image sets.
Here, we will formalize this conjecture by comparing tangents and normals
to the graph with tangents and normals to the domain
under the inner calmness* on one hand,
and with tangents and normals to the images under the calmness
on the other hand.
Let us start with the former.

\begin{theorem} \label{The : IC*calc}
	Let $M\colon\R^m \tto \R^n$ be a set-valued mapping and let
	$\yb\in\dom M$ be chosen such that $\gph M$ is locally closed around $\{\yb\}\times\R^n$ and $\dom M$ is locally closed around $\yb$.
	Then the following assertions hold.
	\begin{itemize}
		\item[$\mathbf{T}$] \textup{Tangents:} We always have
			\begin{equation*}
				    T_{\dom M}(\bar y)
				\ \supset \
				\bigcup\limits_{\xb\in M(\yb)}\dom DM(\yb,\xb),
			\end{equation*}
			and the opposite inclusion holds true if
			$M$ is inner calm* at $\yb$ w.r.t.\ $\dom M$ in the fuzzy sense.
			Moreover, if $\bar\kappa_v$ denotes the modulus of inner calmness* of $M$ at $\yb$
			w.r.t.\ $\dom M$ in direction $v$ in the fuzzy sense, we in fact have
			\begin{equation*}
			    T_{\dom M}(\bar y)
				\ = \
				\left\{
					v\,\middle|\,
					\inf_{\xb\in M(\yb)}
					\inf_{u\in DM(\bar y,\bar x)(v)}\,
					\norm{u} \leq \bar\kappa_v\norm{v}
				\right\},
			\end{equation*}
			and this holds with $\bar\kappa_v$ replaced by $\bar\kappa$ provided $M$ is even
			inner calm* at $\yb$ w.r.t.\ $\dom M$ with modulus $\bar\kappa$.
		\item[$\mathbf{\widehat{N}}$] \textup{Regular normals:} We always have
			\begin{equation*}
				\widehat{N}_{\dom M}(\yb) \ \subset \
				\bigcap_{\xb \in M(\yb)} \widehat{D}^*M(\yb,\xb)(0),
			\end{equation*}
			and the opposite inclusion holds true if
			$M$ is inner calm* at $\yb$ w.r.t.\ $\dom M$ in the fuzzy sense.
		\item[$\mathbf{N}$] \textup{Limiting normals:} 
			If $M$ is inner semicompact at $\yb$ w.r.t.\ $\dom M$, then we have
			\begin{equation*}
				N_{\dom M}(\yb) \ \subset \
				\bigcup_{\xb \in M(\yb)} D^*M(\yb,\xb)(0).
			\end{equation*}
		\item[$\mathbf{dN}$] \textup{Directional limiting normals:}
			Let $v \in \R^m$ be a fixed direction.
			If $M$ is inner semicompact at $\yb$ in direction $v$ w.r.t.\ $\dom M$, then we have
			\begin{align*}
				N_{\dom M}(\yb;v) 
				\ \subset \				
				\bigcup_{\xb \in M(\yb)}
				&\left[
				    \bigcup_{u \in DM(\yb,\xb)(v)} D^*M((\yb,\xb);(v,u))(0) \right.\\
			    	&\qquad \left.\cup
				    \bigcup_{u \in DM(\yb,\xb)(0) \cap \Sp} D^*M((\yb,\xb);(0,u))(0)
				\right].
			\end{align*}
			Moreover, the union over $DM(\yb,\xb)(0) \cap \Sp$ is superfluous
			if $M$ is inner calm* at $\yb$ in direction $v$ w.r.t.\ $\dom M$
			with modulus $\bar\kappa_v$, and $u \in DM(\yb,\xb)(v)$
			can be chosen with $\norm{u}\leq \kappa \norm{v}$
			for any $\kappa > \bar\kappa_v$ in this case.
	\end{itemize}
\end{theorem}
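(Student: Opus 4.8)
The plan is to treat the four assertions in the order stated, exploiting that they are layered: the tangent assertion $\mathbf{T}$ carries the core argument, while the normal assertions follow from it by polarity and by a limiting passage. Throughout, the governing mechanism is that a tangent pair $(v,u)$ on $\gph M$ always projects onto a tangent $v$ on $\dom M$, and dually that a regular normal $(y^*,0)$ on $\gph M$ restricts to a regular normal $y^*$ on $\dom M$; the (inner) calmness hypotheses are exactly what let us run this correspondence backwards, i.e.\ produce, for a given approach direction $v$ in $\dom M$, a matching $\xb\in M(\yb)$ together with a controlled second component $u$.

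First I would record the two elementary hypothesis-free ``forward'' facts. Fact (a): if $(v,u)\in T_{\gph M}(\yb,\xb)$ then $v\in T_{\dom M}(\yb)$, by projecting the defining sequence $(\yb,\xb)+t_k(v_k,u_k)\in\gph M$ onto its first component. Fact (b): its polar consequence $\widehat N_{\dom M}(\yb)\subset\widehat D^*M(\yb,\xb)(0)$ for every $\xb\in M(\yb)$, since any $y^*\in\widehat N_{\dom M}(\yb)=T_{\dom M}(\yb)^\circ$ annihilates every such $v$ and hence $(y^*,0)\in T_{\gph M}(\yb,\xb)^\circ=\widehat N_{\gph M}(\yb,\xb)$. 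Fact (a) delivers the trivial inclusion in $\mathbf{T}$, and after taking infima the inclusion ``$\supset$'' in the refined equality (a $v$ with finite infimum has $DM(\yb,\xb)(v)\neq\varnothing$ for some $\xb$); fact (b) delivers the trivial inclusion in $\mathbf{\widehat{N}}$.

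For the reverse inclusion in $\mathbf{T}$ I would fix $v\in T_{\dom M}(\yb)$, $v\neq0$, and invoke fuzzy inner calmness* in direction $v$: for every $\kappa>\bar\kappa_v$ it yields $y_k=\yb+t_kw_k\in\dom M$ with $w_k\to v$, a point $\xb$, and $x_k\in M(y_k)$ obeying $\norm{x_k-\xb}\le\kappa\norm{y_k-\yb}$. Setting $u_k:=(x_k-\xb)/t_k$ gives $\norm{u_k}\le\kappa\norm{w_k}$, so a subsequence converges to some $u$ with $\norm{u}\le\kappa\norm{v}$; local closedness of $\gph M$ forces $\xb\in M(\yb)$ and $(v,u)\in T_{\gph M}(\yb,\xb)$, i.e.\ $u\in DM(\yb,\xb)(v)$. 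Letting $\kappa\downarrow\bar\kappa_v$ gives the bound in the equality, and the uniform version follows from $\bar\kappa_v\le\bar\kappa$. Assertion $\mathbf{\widehat{N}}$ then closes by combining the reverse reading of fact (b) with $\mathbf{T}$: for $y^*$ in the intersection and any $v\in T_{\dom M}(\yb)$, pick $\xb$ and $u\in DM(\yb,\xb)(v)$ from $\mathbf{T}$; pairing $(y^*,0)\in\widehat N_{\gph M}(\yb,\xb)$ against $(v,u)$ gives $\skalp{y^*,v}\le0$, so $y^*\in T_{\dom M}(\yb)^\circ=\widehat N_{\dom M}(\yb)$.

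The two limiting assertions follow from the same template plus a limit. For $\mathbf{N}$, take $y^*\in N_{\dom M}(\yb)$ with $y_k^*\to y^*$, $y_k^*\in\widehat N_{\dom M}(y_k)$, $y_k\to\yb$; then $(y_k)\subset\dom M$, so inner semicompactness w.r.t.\ $\dom M$ yields $\xb\in M(\yb)$ and $x_k\in M(y_k)$ with $x_k\to\xb$ along a subsequence, and fact (b) at $(y_k,x_k)$ plus passing to the limit gives $y^*\in D^*M(\yb,\xb)(0)$. The main obstacle, and the only genuinely delicate step, is $\mathbf{dN}$: the witness sequence $y_k=\yb+t_kv_k$ now approaches $\yb$ from $v$, and under mere inner semicompactness there is no rate control on $x_k-\xb$. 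Here I would split according to the asymptotics of $\norm{x_k-\xb}$ relative to $t_k\norm{v_k}$. If it stays bounded, rescaling by $t_k$ produces a direction $(v,u)$ with $u\in DM(\yb,\xb)(v)$ and places $y^*$ in $D^*M((\yb,\xb);(v,u))(0)$; if it tends to infinity, rescaling instead by $\tau_k:=\norm{x_k-\xb}$ annihilates the first component and yields $(0,u)$ with $u\in DM(\yb,\xb)(0)\cap\Sp$, placing $y^*$ in $D^*M((\yb,\xb);(0,u))(0)$ --- precisely the second, ``superfluous'' union. Finally, if $M$ is inner calm* in direction $v$, applying it to the witness sequence forces $\norm{x_k-\xb}\le\kappa t_k\norm{v_k}$, so the divergent case never occurs and the rescaled limit satisfies $\norm{u}\le\kappa\norm{v}$ for every $\kappa>\bar\kappa_v$, which simultaneously removes the extra union and supplies the norm bound.
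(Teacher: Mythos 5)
Your proposal is correct and follows essentially the same route as the paper's own proof: the same projection argument and bounded difference quotients $u_k=(x_k-\xb)/t_k$ for $\mathbf{T}$, the same polarity mechanism for $\mathbf{\widehat N}$, the same diagonal limiting argument for $\mathbf{N}$, and the identical case distinction in $\mathbf{dN}$ (bounded rescaled quotients versus rescaling by $\norm{x_k-\xb}$), with directional inner calmness* eliminating the divergent branch and supplying the bound $\norm{u}\leq\kappa\norm{v}$. The only cosmetic deviations are that you establish the regular-normal equivalence by two direct pairing inclusions instead of the paper's explicit polar identity $(\dom DM(\yb,\xb))^\circ=\widehat D^*M(\yb,\xb)(0)$, and you phrase the $\mathbf{dN}$ dichotomy relative to $t_k\norm{v_k}$ rather than $t_k$, neither of which changes the substance.
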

\begin{proof}
 In order to prove $\mathbf T$, suppose first that there exist $\xb \in M(\yb)$ 
 and $u \in DM(\yb,\xb)(v)$ implying
 the existence of $(t_k) \downarrow 0$ and $(v_k,u_k) \to (v,u)$ with
 $(\yb + t_k v_k,\xb + t_k u_k) \in \gph M$ for all $k\in\N$.
 Particularly, this means $\yb + t_k v_k \in \dom M$ for all $k\in\N$ and, thus, 
 $v \in T_{\dom M}(\yb)$.
 
 On the other hand, consider $v \in T_{\dom M}(\yb)$.
 If $M$ is inner calm* at $\bar y$ w.r.t.\ $\dom M$ in direction $v$ in the fuzzy sense
 with modulus $\bar\kappa_v$, then we find $(t_k) \downarrow 0$
 and $(v_k) \to v$ with $\yb + t_k v_k \in \dom M$ 
 together with $x_k \in M(\yb + t_k v_k)$ for all $k\in\N$ and $\xb\in\R^n$ such that
 \begin{equation} \label{eq : IC*estim}
  \norm{x_k - \xb} \leq \kappa t_k \norm{v_k}
 \end{equation}
 holds for each $\kappa > \bar\kappa_v$ and all $k\in\N$.
 Introducing $u_k:=(x_k-\xb)/t_k$, this means that
 $(\yb + t_k v_k, \xb + t_k u_k) \in \gph M$
 is valid for all $k\in\N$. Additionally, we have
 $\norm{u_k} \leq \kappa \norm{v_k}$
 for all $k\in\N$, and the boundedness of
 $(v_k)$ yields the boundedness of $(u_k)$.
 Thus, we find $u\in\R^n$ such that
 $u \in DM(\yb,\xb)(v)$
 and $\norm{u} \leq \kappa \norm{v}$ hold,
 taking also into account that $\gph M$ is locally closed around each $(\yb,\xb)$
 yielding $(\yb,\xb)\in\gph M$ and, thus, $\xb\in M(\yb)$.
 Hence,
 $\inf_{\xb\in M(\yb)}\inf_{u\in DM(\bar y,\bar x)(v)}\, \norm{u} \leq \kappa\norm{v}$
 follows, and since $\kappa > \bar\kappa_v$ was arbitrary,
 the infimum is also bounded from above by $\bar\kappa_v \norm{v}$.
 The statement about inner calmness* follows immediately from
 $\bar\kappa_v \leq \bar\kappa$ for every direction $v$.

 Next, let us prove the estimate $\mathbf{\widehat{N}}$.
 Keeping the statement $\mathbf{T}$ for tangents in mind,
 it is sufficient to prove that
 \[
 	\left(\bigcup\nolimits_{\xb \in M(\yb)} \dom DM(\yb,\xb) \right)^{\circ}
 	\ = \ 
 	\bigcap\nolimits_{\xb \in M(\yb)} \widehat{D}^*M(\yb,\xb)(0)
 \]
 holds in order to show the estimates for regular normals.
 Exploiting the calculus rules of polarization, this holds whenever
  $(\dom DM(\bar y,\bar x))^\circ=\widehat{D}^*M(\bar y,\bar x)(0)$
 is valid for all $\bar x\in M(\bar y)$. Thus, let us fix $\bar x\in M(\bar y)$.
 
 Pick $y^* \in (\dom DM(\yb,\xb))^{\circ}$
 and consider an arbitrary  pair $(v,u) \in T_{\gph M}(\yb,\xb)$.
 Then we particularly have $v \in \dom DM(\yb,\xb)$,
 which yields $ \skalp{(y^*,0),(v,u)}=\skalp{y^*,v} \leq 0$,
 showing $y^*\in\widehat{D}^*M(\yb,\xb)(0)$.
 Conversely, fix $\tilde y^* \in \widehat{D}^*M(\yb,\xb)(0)$
 and consider $v \in \dom DM(\yb,\xb)$.
 Then we find $u \in DM(\yb,\xb)(v)$, and 
 $\skalp{\tilde y^*,v} = \skalp{(\tilde y^*,0),(v,u)} \leq 0$
 follows which leads to $\tilde y^* \in (\dom DM(\yb,\xb))^{\circ}$.
 
 In order to prove $\mathbf{N}$, consider $y^* \in N_{\dom M}(\yb)$.
 By definition, we find sequences
 $(y_k) \to \yb$ and $(y_k^*) \to y^*$ with
 $y_k^* \in \widehat{N}_{\dom M}(y_k)$ for all $k\in\N$.
 By inner semicompactness of $M$ w.r.t.\ $\dom M$, there exist $\xb\in\R^n$
 and a sequence $(x_k)\to\xb$ with $x_k\in M(y_k)$ for all $k\in\N$
 (at least along a subsequence without relabelling).
 The closedness properties of $M$ guarantee $\xb\in M(\yb)$.
 The estimate $\mathbf{\widehat{N}}$ for regular normals yields, in particular, that
 $ y_k^* \in \widehat{D}^*M(y_k,x_k)(0)$ holds for all $k\in\N$.
 Thus, taking the limit yields $y^*\in D^*M(\yb,\xb)(0)$.
  
 Finally, let us prove $\mathbf{dN}$.
 Therefore, we fix $y^*\in N_{\dom M}(\yb;v)$.
 By definition, we find sequences $(t_k) \downarrow 0$, $(v_k) \to v$, and
 $(y_k^*)\to y^*$ such that $y_k^*\in\widehat{N}_{\dom M}(\bar y+t_kv_k)$ holds
 for all $k\in\N$. Since $M$ is inner semicompact at $\bar y$ w.r.t.\ $\dom M$ in direction $v$,
 we find a point $\bar x\in\R^n$ and a sequence $(x_k)\to\bar x$ satisfying
 $x_k\in M(\bar y+t_kv_k)$ for all $k\in\N$ (at least along a subsequence without relabelling).
 Statement $\mathbf{\widehat{N}}$ yields
 $y_k^*\in \widehat{D}^*M(\bar y+t_kv_k,x_k)(0)$ for all $k\in\N$.
 Now, we distinguish two cases. 
 
 First, assume that $((x_k-\bar x)/t_k)$ is bounded.  
 In this case, there is some $u\in\R^n$ such that $((x_k-\bar x)/t_k)\to u$
 holds along a subsequence (without relabelling). Due to the relation
 $(\bar y+t_kv_k,\bar x+t_k(x_k-\bar x)/t_k)\in\gph M$ for all $k\in\N$, we 
 find $u\in DM(\bar y,\bar x)(v)$ by taking the limit $k\to\infty$.
 Furthermore, due to $y_k^*\in \widehat D^*M(\bar y+t_kv_k,\bar x+t_k(x_k-\bar x)/t_k)(0)$
 for all $k\in\N$, the definition of the limiting directional coderivative yields
 $y^*\in\widehat D^*M((\bar y,\bar x);(v,u))(0)$.
 Note that this argumentation is always possible if $M$ is inner calm* at $\bar y$ in direction
 $v$ w.r.t.\ $\dom M$ with modulus $\bar\kappa_v$ 
 since we have $\norm{(x_k-\bar x)/t_k}\leq\kappa\norm{v_k}$ for all
 $k\in\N$ and each $\kappa>\bar\kappa_v$ in this case for the sequence $(x_k)$ and the point
 $\bar x$ from \cref{def:ICandC}. Taking the limit $k\to\infty$ yields $\norm{u}\leq\kappa\norm{v}$.
 
 Next, we assume that $((x_k-\bar x)/t_k)$ is not bounded. In this case, $t_k/\norm{x_k-\bar x}\to 0$
 needs to hold along a subsequence (without relabelling). Setting 
 $\tilde u_k:=(x_k-\bar x)/\norm{x_k-\bar x}$ for all $k\in\N$, we find some $\tilde u\in\Sp$
 such that $\tilde u_k\to\tilde u$ holds along a subsequence (without relabelling again).
 Observing that we have
 \[
 	\left(\bar y+\norm{x_k - \xb} \frac{t_kv_k}{\norm{x_k - \xb}},
    \bar x+\norm{x_k - \xb} \tilde u_k\right)\in\gph M
 \]
 for all $k\in\N$, $\tilde u\in DM(\bar y,\bar x)(0)$ follows by taking the limit $k\to\infty$.
 Due to
 \[
    y^*_k\in\widehat{D}^*M\left(\bar y+\norm{x_k - \xb} \frac{t_kv_k}{\norm{x_k - \xb}},
    \bar x+\norm{x_k - \xb} \tilde u_k\right)(0)
 \]
 for all $k\in\N$, $y^* \in D^*M((\bar y,\bar x);(0,\tilde u))(0)$ is obtained by taking the limit
 $k\to\infty$.
\end{proof}

Let us recall again that the local closedness of $\dom M$
is automatically satisfied if $M$ is inner semicompact at $\yb$ w.r.t. $\dom M$.

Note that the above theorem extends the results from
\cite[Theorem~4.1]{Be19} and \cite[Theorem~3.2]{BeGfrOut19}.

The formulas in \cref{The : IC*calc} reveal an interesting feature:
Tangents to the domain can be expressed as a domain
of the graphical derivative.
On the other hand, normals to the domain
are estimated via the image (of $0$) of the coderivative.
In other words, the primal constructions preserve the domain
while the dual ones flip it to the image.

Now, let us deal with the other pattern connecting tangents and normals
to images of $M$ and the graph of $M$, respectively, by exploiting calmness.

\begin{theorem}\label{The : Ccalc}
Assume that $M\colon\R^m\tto\R^n$ possesses a locally closed graph
around $(\bar y,\bar x)\in\gph M$.
Then the following assertions hold.
\begin{itemize}
	\item[$\mathbf T$] \textup{Tangents:} We always have
		\[
			T_{M(\bar y)}(\bar x)\ \subset \ DM(\bar y,\bar x)(0),
		\]
		and the opposite inclusion holds true whenever $M$ is calm at
		$(\bar y,\bar x)$.
	\item[$\mathbf{\widehat{N}}$] \textup{Regular normals:} We always have
		\[
			\widehat{N}_{M(\bar y)}(\bar x)\ \supset \ -\dom \widehat{D}^*M(\bar y,\bar x).
		\]
	\item[$\mathbf N$] \textup{Limiting normals:} If $M$ is calm at $(\bar y,\bar x)$
		with modulus $\bar\kappa > 0$, then we have
		\[
			\begin{aligned}
				N_{M(\bar y)}(\bar x)
				\ &\subset \
				\left\{-x^*\,\middle|\, \inf_{y^*\in D^*M(\bar y,\bar x)(x^*)}\,
				            \norm{y^*} \leq \bar\kappa\norm{x^*}\right\}\\
				\ &\subset \
				-\dom D^*M(\bar y,\bar x).
			\end{aligned}			
		\]
	\item[$\mathbf{dN}$] \textup{Directional limiting normals:}
		Let $u\in\R^n$ be a fixed direction.
		If $M$ is calm at $(\bar y,\bar x)$ in direction $u$ with modulus $\bar\kappa_u > 0$,
		then we have
		\[
			\begin{aligned}
				N_{M(\bar y)}(\bar x;u)
				\ &\subset \
				\left\{-x^*\,\middle|\, \inf_{y^*\in D^*M((\bar y,\bar x);(0,u))(x^*)}\,
				            \norm{y^*} \leq \bar\kappa_u\norm{x^*}\right\}\\
				\ &\subset \
				-\dom D^*M((\bar y,\bar x);(0,u)).
			\end{aligned}
		\]
\end{itemize}
\end{theorem}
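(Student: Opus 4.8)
The plan is to treat the four assertions separately, noting that the primal statement $\mathbf T$ and the regular statement $\mathbf{\widehat{N}}$ are elementary consequences of the ``vertical'' relation $M(\bar y)=\{x\mid(\bar y,x)\in\gph M\}$, whereas the two limiting statements $\mathbf N$ and $\mathbf{dN}$ carry the real content and rest on the equivalence, recorded after \eqref{eq:standard_definition_calmness}, between calmness of $M$ at $(\bar y,\bar x)$ and metric subregularity of $M^{-1}$ at $(\bar x,\bar y)$. Indeed, writing $F(x):=M^{-1}(x)-\bar y$ turns $M(\bar y)$ into $F^{-1}(0)$ and calmness into metric subregularity of $F$ at $(\bar x,0)$; since the coordinate swap $(y,x)\mapsto(x,y-\bar y)$ carries graph normals to graph normals with the two slots exchanged, and thus $D^*F$ into $D^*M$, the statements $\mathbf N$ and $\mathbf{dN}$ are essentially \cite[Proposition~4.1]{GfrOut16} read through this inverse. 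I would therefore either invoke that result after the coordinate change, or, to keep things self-contained, reprove it by the penalization argument sketched below.

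Starting with $\mathbf T$: for the always-true inclusion I would take $u\in T_{M(\bar y)}(\bar x)$, realized by $(t_k)\downarrow0$ and $(u_k)\to u$ with $\bar x+t_ku_k\in M(\bar y)$, and observe that then $(\bar y+t_k\cdot0,\bar x+t_ku_k)\in\gph M$, i.e.\ $(0,u)\in T_{\gph M}(\bar y,\bar x)$, which is $u\in DM(\bar y,\bar x)(0)$. For the reverse inclusion under calmness I would start from $(0,u)\in T_{\gph M}(\bar y,\bar x)$, realized by $(\bar y+t_kv_k,\bar x+t_ku_k)\in\gph M$ with $(v_k,u_k)\to(0,u)$, set $x_k:=\bar x+t_ku_k\to\bar x$, and use calmness to produce $\tilde x_k\in M(\bar y)$ with $\norm{x_k-\tilde x_k}\le\kappa t_k\norm{v_k}$; then $\tilde u_k:=(\tilde x_k-\bar x)/t_k$ satisfies $\norm{\tilde u_k-u_k}\le\kappa\norm{v_k}\to0$, so $\tilde u_k\to u$ with $\bar x+t_k\tilde u_k\in M(\bar y)$, giving $u\in T_{M(\bar y)}(\bar x)$. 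Correcting the $y$-drift via calmness is the only nontrivial point here. Statement $\mathbf{\widehat{N}}$ would then follow purely by polarity: from $T_{M(\bar y)}(\bar x)\subset DM(\bar y,\bar x)(0)$ I get $\widehat N_{M(\bar y)}(\bar x)=T_{M(\bar y)}(\bar x)^\circ\supset (DM(\bar y,\bar x)(0))^\circ$, and for $x^*\in\dom\widehat D^*M(\bar y,\bar x)$, picking $y^*$ with $(y^*,-x^*)\in\widehat N_{\gph M}(\bar y,\bar x)=T_{\gph M}(\bar y,\bar x)^\circ$ and testing against any $(0,u)\in T_{\gph M}(\bar y,\bar x)$ yields $\skalp{-x^*,u}\le0$, i.e.\ $-x^*\in(DM(\bar y,\bar x)(0))^\circ\subset\widehat N_{M(\bar y)}(\bar x)$.

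The heart of the matter is $\mathbf N$. Given $\bar x^*\in N_{M(\bar y)}(\bar x)$, I would take $x_k\to\bar x$ in $M(\bar y)$ and $x_k^*\to\bar x^*$ with $x_k^*\in\widehat N_{M(\bar y)}(x_k)$, and lift each $x_k^*$ to a graph normal with controlled $y$-multiplier. The mechanism: for fixed $\varepsilon>0$, the regular-normal inequality $\skalp{x_k^*,x-x_k}\le\varepsilon\norm{x-x_k}$ (valid on a small ball) makes $x_k$ a local minimizer of $-\skalp{x_k^*,\cdot}+\varepsilon\norm{\cdot-x_k}$ over $M(\bar y)$; as this objective is Lipschitz near $x_k$ with modulus $\le\norm{x_k^*}+\varepsilon$ and the estimate $\dist(x,M(\bar y))\le\kappa\,\dist(\bar y,M^{-1}(x))$ holds uniformly near $\bar x$, hence near each $x_k$ for large $k$, exact penalization makes $(\bar y,x_k)$ a local minimizer over $\gph M$ of $(y,x)\mapsto-\skalp{x_k^*,x}+\varepsilon\norm{x-x_k}+\kappa(\norm{x_k^*}+\varepsilon)\norm{y-\bar y}$. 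Fermat's rule together with the sum rule for regular normals (smooth plus convex plus indicator of $\gph M$) then produces $a_k^*\in\varepsilon\B$ and $b_k^*\in\kappa(\norm{x_k^*}+\varepsilon)\B$ with $(-b_k^*,x_k^*-a_k^*)\in\widehat N_{\gph M}(\bar y,x_k)$; letting $\varepsilon\downarrow0$ and using closedness of the regular normal cone gives $(-b_k^*,x_k^*)\in\widehat N_{\gph M}(\bar y,x_k)$ with $\norm{b_k^*}\le\kappa\norm{x_k^*}$. Boundedness of $(b_k^*)$ lets me extract a limit $b^*$ with $\norm{b^*}\le\bar\kappa\norm{\bar x^*}$ (taking $\kappa\downarrow\bar\kappa$ diagonally) and pass to the limit to get $(-b^*,\bar x^*)\in N_{\gph M}(\bar y,\bar x)$, i.e.\ $-b^*\in D^*M(\bar y,\bar x)(-\bar x^*)$ with $\norm{-b^*}\le\bar\kappa\norm{\bar x^*}$. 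Writing $x^*:=-\bar x^*$ this is exactly the asserted bound $\inf_{y^*\in D^*M(\bar y,\bar x)(x^*)}\norm{y^*}\le\bar\kappa\norm{x^*}$, and the second inclusion into $-\dom D^*M(\bar y,\bar x)$ is immediate since a finite infimum forces $D^*M(\bar y,\bar x)(x^*)\neq\varnothing$.

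Finally, $\mathbf{dN}$ runs along the same lines with the approach direction tracked throughout: for $\bar x^*\in N_{M(\bar y)}(\bar x;u)$ the generating points are $x_k=\bar x+t_ku_k$ with $(t_k)\downarrow0$ and $u_k\to u$, so the lifted normals sit at $(\bar y,x_k)=(\bar y,\bar x)+t_k(0,u_k)$, approaching $(\bar y,\bar x)$ from direction $(0,u)$, and the final limit lands in $N_{\gph M}((\bar y,\bar x);(0,u))$, yielding $-b^*\in D^*M((\bar y,\bar x);(0,u))(-\bar x^*)$ with $\norm{b^*}\le\bar\kappa_u\norm{\bar x^*}$. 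The one delicate point, which I expect to be the main obstacle, is that directional calmness supplies the subregularity estimate only on a directional neighborhood of $\bar x$ around the ray $\bar x+\R_+u$; I would therefore run the $\varepsilon$-penalization on balls around $x_k$ of radius $o(t_k)$, which lie in that directional neighborhood because $x-\bar x=t_ku_k+(x-x_k)$ still points essentially along $u$, keeping the estimate with modulus $\bar\kappa_u$ available. The trivial second inclusions in both $\mathbf N$ and $\mathbf{dN}$ need no separate argument.
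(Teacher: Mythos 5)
Your first route---rewriting calmness of $M$ at $(\bar y,\bar x)$ as metric subregularity of the inverse map and invoking \cite[Proposition~4.1]{GfrOut16} after the coordinate swap---is precisely the paper's proof: the paper converts that proposition to the calmness setting, reads off equality in $\mathbf T$ together with the directional estimate $\mathbf{dN}$, obtains $\mathbf N$ as the special case $u=0$, and derives $\mathbf{\widehat N}$ by polarization from the unconditional inclusion in $\mathbf T$, exactly as you do. Your direct verifications of $\mathbf T$ (correcting the $y$-drift via calmness) and of $\mathbf{\widehat N}$ are correct.

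The self-contained penalization route, however, has one genuine flaw: the step ``Fermat's rule together with the sum rule for regular normals (smooth plus convex plus indicator of $\gph M$)''. There is no \emph{exact} sum rule for regular (Fr\'{e}chet) subdifferentials with a nonsmooth convex summand, and your two norm terms $\varepsilon\norm{\cdot-x_k}$ and $\kappa(\norm{x_k^*}+\varepsilon)\norm{\cdot-\bar y}$ are nondifferentiable precisely at the minimizer $(\bar y,x_k)$, so the failure mode is genuinely in play. Concretely, for $\Omega:=\{(t,-|t|)\,\mid\,t\in\R\}$ and the convex Lipschitz function $\varphi(u,v):=v+2|u|$ one has $0\in\widehat\partial(\varphi+\delta_\Omega)(0,0)$, yet $0\notin\partial\varphi(0,0)+\widehat N_\Omega(0,0)$ since $\widehat N_\Omega(0,0)=\{(p,q)\,\mid\,|p|\leq q\}$ forces a second component $\geq 1$ in the sum; so the decomposition you assert can simply fail to exist at the exact point. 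Only the fuzzy sum rule (regular normals at nearby points) or the exact \emph{limiting} sum rule (limiting normals at the point) is available---which is why \cite{GfrOut16} and \cite{HenJouOut02} phrase this argument with limiting constructions.

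The repair is standard and costs nothing. For $\mathbf N$: the limiting sum rule yields $(-b_k^*,\,x_k^*-a_k^*)\in N_{\gph M}(\bar y,x_k)$ with the same bounds $\norm{a_k^*}\leq\varepsilon$, $\norm{b_k^*}\leq\kappa(\norm{x_k^*}+\varepsilon)$; your $\varepsilon\downarrow 0$ step survives because the limiting normal cone at a fixed point is closed, and the passage $k\to\infty$ uses outer semicontinuity of $N_{\gph M}$. For $\mathbf{dN}$ one extra diagonal step is needed: each limiting normal at $(\bar y,x_k)=(\bar y,\bar x)+t_k(0,u_k)$ is itself a limit of regular normals at points of $\gph M$; selecting those points within $t_k/k$ of $(\bar y,x_k)$ and the normals within $1/k$ of $(-b_k^*,x_k^*)$ preserves convergence to $(\bar y,\bar x)$ from direction $(0,u)$, so the limit lands in $N_{\gph M}((\bar y,\bar x);(0,u))$ as required. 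Your localization of the penalization to balls of radius $o(t_k)$ around $x_k$, keeping the points inside the directional neighborhood where the directional calmness estimate is valid, correctly handles the other delicate point, and your bound chasing (passing $\kappa\downarrow\bar\kappa$, resp.\ $\bar\kappa_u$, into the infimum) matches the paper's own handling of the moduli.
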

\begin{proof}
Converting \cite[Proposition~4.1]{GfrOut16} from the setting of metric subregularity
into the calmness setting readily yields equality in the statement $\mathbf T$ 
as well as the estimate
\[
N_{M(\bar y)}(\bar x;u) \subset \{-x^* \mv \exists\, y^* \in D^*M((\bar y,\bar x);(0,u))(x^*): \ \norm{y^*} \leq \kappa \norm{x^*}\}
\]
for any $\kappa > \bar\kappa_u$. Thus, in particular, the infimum of $\norm{y^*}$ over
$y^* \in D^*M((\bar y,\bar x);(0,u))(x^*)$ is bounded by $\kappa \norm{x^*}$
and, consequently, also by $\bar\kappa_u\norm{x^*}$. This justifies the statement
$\mathbf{dN}$, and $\mathbf N$ follows as a special case by choosing $u=0$,
see also \cite[Theorem 4.1]{HenJouOut02}.
The inclusion in $\mathbf{\widehat{N}}$ is obtained by polarization from the inclusion $\subset$ 
in $\mathbf T$ which is generally valid:
\begin{align*}
	\widehat N_{M(\yb)}(\xb)
	&\supset
	\bigl(DM(\yb,\xb)(0)\bigr)^\circ
	=
	\left\{x^*\,\middle|\,\skalp{x^*,u}\leq 0\,\forall u\in DM(\yb,\xb)(0)\right\}\\
	&=
	\left\{x^*\,\middle|\,\exists y^*\in\R^m\colon\,
		\skalp{(y^*,x^*),(0,u)}\leq 0\,\forall (0,u)\in T_{\gph M}(\yb,\xb)\right\}\\
	&\supset
	\left\{x^*\,\middle|\,\exists y^*\in\R^m\colon\,(y^*,x^*)\in\widehat N_{\gph M}(\yb,\xb)\right\}\\
	&=
	-\dom \widehat{D}^*M(\yb,\xb).
\end{align*}
This already completes the proof.
\end{proof}

Note that the local closedness of $M(\yb)$ around $\xb$ follows from
the local closedness of $\gph M$ around $(\yb,\xb)$.

We see similar features in play as before,
namely that tangents to image sets are given
via an image of the graphical derivative
(the primal constructions preserve the image)
while normals to image sets are characterized via the domain
of the associated coderivative
(the dual constructions flip the image to the domain).
%

\begin{remark}\label{rem:cones_to_domain}
    Let us mention that \cref{The : IC*calc,The : Ccalc}
    are in fact special cases of the calculus rules for
    tangents and normals to image sets and pre-image sets, respectively,
    since $\dom M=\varphi(\gph M)$
	where $\varphi\colon\R^m\times\R^n\to\R^m$
	is given by $\varphi(y,x):=y$, while
	$M(\yb)=\phi^{-1}(\gph M)$
	for $\phi\colon\R^n\to\R^m\times\R^n$ given by $\phi(x):=(\bar y,x)$.
	Related results for tangents and (directional) limiting normals can be found
	in \cite[Corollary 4.2 and Theorem~4.3]{Be19} and \cite[Theorems~3.1 and 3.2]{BeGfrOut19}, where the inner calmness* assumption is
	imposed on
	\[\widetilde M_1(y):=\varphi^{-1}(y) \cap \gph M = (y,M(y))\]
	and the calmness assumption is imposed on
	\[\widetilde M_2(y,z):= \{x \mv \phi(x) + (y,z) \in \gph M\} =
	M(\yb + y) - z.\]
    Since the functions $y \to y$, $(y,z) \to \yb + y$, 
    and $(y,z) \to -z$ are affine,
    they are calm, and \cref{pro : SV_calm_perturb} yields
    that the inner calmness* of $M$ implies
    the inner calmness* of $\widetilde M_1$
    and the calmness of $M$ implies that of $\widetilde M_2$.
    In fact, these assumptions are equivalent;
    the inner calmness* of $M$ follows immediately from the inner calmness* of $\widetilde M_1$,
    while the calmness of $M$ follows
    again from \cref{pro : SV_calm_perturb}
    since $M(y)=\widetilde M_2(y-\yb,0)$.
\end{remark}

Finally, let us comment on some potential extensions of our results to the setting of Asplund spaces where
the calculus of (directional) limiting normals is well-developed as well, see
\cite{LongWangYang2017,Mo06}. 
\begin{remark}\label{rem:infinite_dimensions}
Let us assume that the set-valued mapping under consideration acts between
Asplund spaces which are Banach spaces where all continuous, convex functions are generically
Fr\'{e}chet differentiable, see \cite{Mo06} for details.\\
For simplicity, we start with a review of \cref{The : Ccalc}. One can easily check
that statement $\mathbf{T}$ holds as stated in the more general situation. This, however,
cannot be used to prove a counterpart of $\widehat{\mathbf N}$ since the regular normal cone
is only a subset of the polar associated with the tangent cone in general. In reflexive spaces,
we get full polarity only w.r.t.\ the \emph{weak} tangent cone, where weak convergence 
of the directions is demanded in the definition, see \cite[Theorem~1.10, Corollary~1.11]{Mo06}.
In order to transfer $\mathbf{N}$ and $\mathbf{dN}$ to the Asplund space setting, the underlying
proof from \cite{GfrOut16} has to be adjusted slightly in order to handle 
that (directional) limiting normals are weak* limits of regular normals in the 
infinite-dimensional situation, see \cite[Section~2]{LongWangYang2017} and \cite[Section~3]{Mo06} 
for details. \\
Let us now commend on \cref{The : IC*calc}. Here, the inclusion $\supset$ in statement $\mathbf{T}$ 
remains valid in the more general setting. The converse
inclusions, however, cannot be shown in the presented way since bounded sequences in infinite-dimensional
spaces do not possess convergent subsequences in general. 
Even in the setting of reflexive Banach spaces, this issue cannot be solved since one constructs
elements of the possibly larger weak tangent cone in the proof.
A similar reasoning can be used to infer that
the proof of statement $\mathbf{dN}$ does not apply in infinite dimensions. 
It is not clear how to obtain any of the inclusions of statement $\mathbf{\widehat{N}}$ due to the
already mentioned difficulties.
On the other hand, one can easily check that statement $\mathbf{N}$
stays true when using the \emph{mixed} coderivative of the involved set-valued map, see
\cite[Definition~1.32]{Mo06} for a definition.\\
Summing up these impressions, a full generalization of the patterns observed in
\cref{The : IC*calc,The : Ccalc} 
to the infinite-dimensional situation is highly questionable for regular normals.
It might be possible to carry over parts of the analysis for regular normals to so-called
Dini--Hadamard normals which are, by definition, polar to tangents, see
\cite{Nechita2012} for details.
The situation seems to be less hopeless for tangents and directional limiting normals in reflexive spaces.
However, one has to find a way to bypass the appearance of weak tangents.
For limiting normals, an extension to Asplund spaces seems to be possible, directly.
Nevertheless, we would like to point the reader's attention to the fact that due to
certain convexification effects, limiting normals turned out to be of limited practical
use in Lebesgue and Sobolev spaces which are standard in optimal control,
see \cite{HarderWachsmuth2018,Mehlitz2019a,MehlitzWachsmuth2018} for a detailed investigation.
\end{remark}

\section{Sufficient Conditions for Calmness and Inner Calmness*} \label{Sec:SC}

In this section, we connect calmness and inner calmness* with
several other continuity and Lipschitzian properties of set-valued mappings.
We begin, however, by showing that inner calmness* in the fuzzy
sense is not only a sufficient condition for validity of the characterization of tangents 
to the domain of a set-valued mapping from \cref{The : IC*calc}, but that the two
are actually equivalent.

\begin{theorem}\label{The:Equiv_GD_IC*fuzzy}
Let $M\colon\R^m \tto \R^n$ be a set-valued mapping and fix $\yb\in\dom M$ 
such that $\gph M$ is locally closed around $\{\yb\}\times\R^n$ 
and $\dom M$ is locally closed around $\yb$.
Then $M$ is inner calm* at $\yb$ in direction $v\in\Sp$ w.r.t.\ $\dom M$ with modulus $\bar\kappa_v$ in the fuzzy sense
if and only if
\begin{equation}\label{eq:fic*_char}
	v \in T_{\dom M}(\yb) 
	\quad \Longrightarrow \quad
	\bar\kappa_v
	=
	\inf\limits_{\xb\in M(\yb)}\inf_{u \in DM(\yb,\xb)(v)} \norm{u}/\norm{v} 
	< 
	\infty.
\end{equation}
In particular, $M$ is inner calm* at $\yb$
w.r.t.\ $\dom M$ in the fuzzy sense if and only if \eqref{eq:fic*_char}
holds for every direction $v\in\Sp$.
\end{theorem}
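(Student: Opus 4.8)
The plan is to reduce both implications to the single quantitative identity $\bar\kappa_v=\rho_v$, where I abbreviate $\rho_v:=\inf_{\bar x\in M(\yb)}\inf_{u\in DM(\yb,\bar x)(v)}\norm{u}/\norm{v}$, with the convention that an infimum over the empty set is $+\infty$; since $v\in\Sp$ we have $\norm{v}=1$, so $\rho_v=\inf_{\bar x\in M(\yb)}\inf_{u\in DM(\yb,\bar x)(v)}\norm{u}$. My first observation is that the case $v\notin T_{\dom M}(\yb)$ is trivial: by \cref{def:inner_calmness*_in_fuzzy_sense}, $M$ is then fuzzy inner calm* in direction $v$ by convention with $\bar\kappa_v=0$, while the implication in \eqref{eq:fic*_char} is vacuously true; moreover the always-valid inclusion $\supset$ in statement $\mathbf T$ of \cref{The : IC*calc} forces $DM(\yb,\bar x)(v)=\varnothing$ for every $\bar x$ here, so $\rho_v=+\infty$ and nothing more is needed. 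Hence I may assume $v\in T_{\dom M}(\yb)$ throughout, where the task becomes the finiteness equivalence together with $\bar\kappa_v=\rho_v$.

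For the direction ``fuzzy inner calmness* $\Rightarrow$ \eqref{eq:fic*_char}'' I would essentially re-run the converse-inclusion argument from the proof of statement $\mathbf T$ in \cref{The : IC*calc}. Assuming $M$ is fuzzy inner calm* in direction $v$ with finite modulus $\bar\kappa_v$, I fix $\kappa>\bar\kappa_v$ and take the single sequence $(y_k)\subset\dom M$ converging to $\yb$ from $v$, say $y_k=\yb+t_kv_k$, together with $\bar x$ and $x_k\in M(y_k)$ obeying $\norm{x_k-\bar x}\le\kappa\,t_k\norm{v_k}$. Setting $u_k:=(x_k-\bar x)/t_k$ yields a bounded sequence with $(\yb+t_kv_k,\bar x+t_ku_k)\in\gph M$; a convergent subsequence produces $u\in DM(\yb,\bar x)(v)$ with $\norm{u}\le\kappa$, while local closedness of $\gph M$ gives $\bar x\in M(\yb)$. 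Thus $\rho_v\le\kappa$, and letting $\kappa\downarrow\bar\kappa_v$ delivers $\rho_v\le\bar\kappa_v<\infty$.

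The genuinely new content, and what I expect to be the crux, is the reverse direction ``\eqref{eq:fic*_char} $\Rightarrow$ fuzzy inner calmness*'', where one has to \emph{build} the required sequence out of a single tangent direction. Suppose $\rho_v<\infty$ and fix any $\kappa_v>\rho_v$. By definition of $\rho_v$ I can choose $\bar x\in M(\yb)$ and $u\in DM(\yb,\bar x)(v)$ with $\norm{u}<\kappa_v$, and the definition of the graphical derivative furnishes $(t_k)\downarrow 0$ and $(v_k,u_k)\to(v,u)$ with $(\yb+t_kv_k,\bar x+t_ku_k)\in\gph M$. Putting $y_k:=\yb+t_kv_k\in\dom M$ and $x_k:=\bar x+t_ku_k\in M(y_k)$, the sequence $(y_k)$ converges to $\yb$ from $v$ and satisfies $\norm{x_k-\bar x}=t_k\norm{u_k}$ and $\norm{y_k-\yb}=t_k\norm{v_k}$. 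Since $\norm{u_k}\to\norm{u}<\kappa_v$ while $\norm{v_k}\to\norm{v}=1$, the estimate $\norm{x_k-\bar x}\le\kappa_v\norm{y_k-\yb}$ holds for all large $k$; passing to the tail and relabelling produces exactly one admissible sequence as demanded in \cref{def:inner_calmness*_in_fuzzy_sense}, so $M$ is fuzzy inner calm* in direction $v$ with constant $\kappa_v$, whence $\bar\kappa_v\le\kappa_v$. Letting $\kappa_v\downarrow\rho_v$ gives $\bar\kappa_v\le\rho_v$, and combined with the previous paragraph this yields $\bar\kappa_v=\rho_v<\infty$.

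The decisive point here is that the fuzzy notion asks only for the existence of \emph{one} admissible sequence, which is precisely why a single element $u\in DM(\yb,\bar x)(v)$ suffices; for the non-fuzzy inner calmness* the estimate would have to hold along \emph{all} sequences approaching $\yb$ from $v$, and no single tangent direction could guarantee that, so one should expect the equivalence to fail without the fuzzy relaxation. Finally, the ``in particular'' claim is immediate: full fuzzy inner calmness* w.r.t.\ $\dom M$ means, by \cref{def:inner_calmness*_in_fuzzy_sense}, fuzzy inner calmness* in every direction $v\in\Sp$, so the directionwise equivalence just established transfers verbatim.
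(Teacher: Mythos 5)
Your proof is correct and follows essentially the same route as the paper's: both directions rest on the same two sequence constructions (extracting a bounded difference quotient from the fuzzy sequence to get some $u\in DM(\yb,\xb)(v)$ with $\norm{u}\leq\kappa$, and conversely unfolding a near-optimal graphical-derivative element into an admissible fuzzy sequence), including the closedness argument that secures $\xb\in M(\yb)$. The only cosmetic differences are that you re-derive the inequality $\rho_v\leq\bar\kappa_v$ instead of citing statement $\mathbf{T}$ of \cref{The : IC*calc}, and you obtain the equality of moduli by combining the two direct inequalities where the paper establishes $\hat\kappa_v\geq\bar\kappa_v$ by a contradiction argument built from the very same construction.
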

\begin{proof}
For the proof, we set 
\[
	\hat\kappa_v:=\inf\limits_{\xb\in M(\yb)}\inf\limits_{u\in DM(\yb,\xb)(v)}\norm{u}/\norm{v}.
\]
First, assume that $M$ is inner calm* at $\yb$ in direction $v$ w.r.t.\ $\dom M$ in the fuzzy sense with
modulus $\bar\kappa_v$. 
Then \cref{The : IC*calc} yields the finiteness of $\hat\kappa_v$
as well as $\hat\kappa_v \leq \bar\kappa_v$.
Supposing that $\hat\kappa_v<\bar\kappa_v$ holds, we find some $\varepsilon>0$, $\bar x\in M(\yb)$,
and $u\in\R^n$ as well as sequences $(t_k)\downarrow 0$, $(u_k)\to u$, and $(v_k)\to v$ such that
$(\yb+t_kv_k,\xb+t_ku_k)\in\gph M$ and $\norm{u_k}\leq(\bar\kappa_v-\varepsilon)\norm{v_k}$ hold
for all $k\in\N$. Setting $y_k:=\yb+t_kv_k$ and $x_k:=\xb+t_ku_k$ for all $k\in\N$, we find
$(y_k)\to\yb$, $(x_k)\to\xb$, and $y_k\in\dom M$ as well as
$\norm{x_k-\xb}\leq(\bar\kappa_v-\varepsilon)\norm{y_k-\yb}$ for all $k\in\N$.
This contradicts the definition of the modulus of inner calmness* of $M$ at $\yb$
w.r.t.\ $\dom M$ in direction $v$ in the fuzzy sense.
Hence, we have shown the validity of \eqref{eq:fic*_char}.

Next, assume that \eqref{eq:fic*_char} holds. 
If $v \notin T_{\dom M}(\yb)$ is valid,
there is nothing to prove. Thus, let us assume $v \in T_{\dom M}(\yb)$.
Then \eqref{eq:fic*_char} yields the existence of
$\xb \in M(\yb)$ and $u\in\R^n$ together with sequences
$(t_k) \downarrow 0$, $(u_k)\to u$, and $(v_k) \to v$
such that $(\yb + t_k v_k, \xb + t_k u_k) \in \gph M$
and $\norm{u_k} \leq \kappa \norm{v_k}$ hold for all $\kappa > \hat\kappa_v$ 
and sufficiently large $k\in\N$. We set $y_k:=\yb+t_kv_k$ and $x_k:=\xb+t_ku_k$ for all $k\in\N$.
Hence, $\norm{x_k - \xb}\leq \kappa \norm{y_k - \yb}$ is valid for all $\kappa>\hat\kappa_v$ and
sufficiently large $k\in\N$
showing the inner calmness* of $M$ at $\yb$ in the fuzzy sense w.r.t.\ $\dom M$ in direction $v$ 
while the modulus satisfies $\bar\kappa_v \leq \hat\kappa_v$.
The first part of the proof now yields $\bar\kappa_v=\hat\kappa_v$.
\end{proof}

Below, we recall some prominent continuity and Lipschitzianity notions from the literature,
see e.g.\ \cite{BeGfrOut19,DoRo14,Mo06}.

\begin{definition}\label{def:other_properties_of_set_valued_maps}
Consider a set-valued mapping $M\colon\R^m \tto \R^n$ which possesses a locally
closed graph around $(\yb,\xb)\in\gph M$.
We say that
\begin{itemize}
    \item[(i)] $M$ is \emph{inner semicontinuous} at $(\yb,\xb)$ w.r.t.\ a 
    	set $\Omega \subset \R^m$ if for every sequence $(y_k)\subset\Omega$ satisfying
    	$(y_k)\to\yb$, there exists  a sequence $(x_k) \to \xb$ such that $x_k \in M(y_k)$ 
    	holds for sufficiently large $k\in\N$. In case $\Omega:=\R^m$, we simply say that $M$
    	is inner semicontinuous at $(\yb,\xb)$.
    \item[(ii)] $M$ is {\em inner calm} at $(\yb,\xb)$ w.r.t.\ a set $\Omega\subset\R^m$
    	if there exists $\kappa>0$
    	such that for every sequence $(y_k)\subset\Omega$ satisfying $(y_k)\to\yb$,
    	there exists a sequence $(x_k)\to\xb$
    	such that $x_k\in M(y_k)$ and 
    	\[
    		\norm{x_k - \xb} \leq \kappa \norm{y_k - \yb}
    	\]
    	hold for sufficiently large $k\in\N$. In case $\Omega:=\R^m$, we simply say that $M$
    	is inner calm at $(\yb,\xb)$.
    \item[(iii)] $M$ has the {\em isolated calmness} property at $(\yb,\xb)$
    	if there exist $\kappa > 0$ and neighborhoods $U$ of $\xb$ and $V$ of $\yb$ such that
    	the following estimate holds:
    	\begin{equation}\label{eq:isolated_calmness}
        	M(y)\cap U \subset \xb + \kappa \norm{y - \yb}\B
        	\quad \forall y \in V.
	    \end{equation}
    \item[(iv)] $M$ has the {\em Aubin property} at $(\yb,\xb)$
   	 if there exist $\kappa > 0$ and neighborhoods $U$ of $\xb$ and $V$ of $\yb$ such that
   	 the following estimate holds:
    	\begin{equation*}
        	M(y^{\prime})\cap U \subset M(y) + \kappa \norm{y-y'}\B
        	\quad \forall y,y'\in V.
    	\end{equation*}
\end{itemize}
The modulus of inner calmness (w.r.t.\ $\Omega$), isolated calmness, and the Aubin property, respectively, 
is defined as the infimum over all $\kappa$ satisfying the respective Lipschitz estimate
from above.
\end{definition}

Let $M\colon\R^m\tto\R^n$ be a set-valued mapping with locally 
closed graph around some point $(\yb,\xb)\in\gph M$.
Inner semicontinuity and inner calmness at $(\yb,\xb)$ are clearly stronger than
inner semicompactness and inner calmness* at $\yb$, respectively.
Thus, in \cref{The : IC*calc}, we focused on inner semicompactness, 
and inner calmness* (in the fuzzy sense)
in order to have the results more general,
taking into account that it is very easy to derive the estimates
based on inner semicontinuity and inner calmness -
one just has the fixed $\xb$ instead of the unions.
More importantly, it seems that these weaker notions have
a better chance to be satisfied in practically relevant settings,
see \cite[Section 3]{Be19}.

Let us note that $M$ is inner calm at $(\yb,\xb)\in\gph M$
if and only if there are
a constant $\kappa>0$ as well as a neighborhood $V$ of $\yb$ such that
\[
	\xb\in M(y)+\kappa\norm{y-\yb}\B\quad\forall y\in V
\]
holds. Thus, whenever $M$ possesses the Aubin property at $(\yb,\xb)$, it is also inner
calm there. However, we note that the modulus of inner calmness might be strictly smaller
than the modulus of the Aubin property in this situation.
It is both obvious and well known that whenever $M$ is isolatedly calm at $(\yb,\xb)$
or possesses the Aubin property at this point, then $M$ is also calm
there. More precisely, isolated calmness
implies calmness with the same modulus, while the Aubin property
implies calmness with not larger modulus. 

Observe that (isolated) calmness and inner calmness of a set-valued 
mapping are not related to each other. On the one hand,
the single-valued function $m\colon\R\to\R$,
given by $m(y):=1/y$ if $y\in\R\setminus\{0\}$ and $m(0):=0$,
is isolatedly calm at $(0,0)\in\gph m$, and hence also calm,
but it is not inner calm there.
On the other hand, $M\colon\R\tto\R$ given by $M(y):=[-|y|^{1/2},\infty)$ for
all $y\in\R$ is inner calm but not calm at $(0,0)$,
let alone isolatedly calm.
The following lemma, however, shows that under additional assumptions, 
isolated calmness may imply inner calmness or inner calmness* (in the fuzzy sense).

\begin{lemma}\label{Lem:ICtoIC}
	Let $M\colon\R^m\tto\R^n$ be a set-valued mapping whose graph
	is locally closed around $\{\yb\}\times\R^n$ for some $\yb\in\dom M$. 
	Then the following assertions hold.
	\begin{itemize}
    	\item[(i)] Fix a point $\xb\in M(\yb)$. 
    				If $M$ is inner semicontinuous w.r.t.\ $\dom M$ and isolatedly calm at $(\yb,\xb)$,
    				then it is inner calm w.r.t.\ $\dom M$ there.
    				
    	\item[(ii)] If $M$ is inner semicompact at $\yb$ w.r.t.\ $\dom M$ 
    		and isolatedly calm at $(\yb,x)$ 
    		for each $x \in M(\yb)$, then it is inner calm* at $\yb$ w.r.t.\ $\dom M$.
	\end{itemize}
\end{lemma}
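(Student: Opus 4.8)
For part (i) I would simply match an inner-semicontinuous selection against the isolated calmness estimate. Fix the constant $\kappa>0$ and neighborhoods $U$ of $\xb$, $V$ of $\yb$ furnished by isolated calmness at $(\yb,\xb)$, so that $M(y)\cap U\subset\xb+\kappa\norm{y-\yb}\B$ for all $y\in V$. Given any sequence $(y_k)\subset\dom M$ with $y_k\to\yb$, inner semicontinuity w.r.t.\ $\dom M$ yields a selection $x_k\in M(y_k)$ with $x_k\to\xb$. For large $k$ one has simultaneously $x_k\in U$ and $y_k\in V$, whence the isolated calmness estimate gives $\norm{x_k-\xb}\leq\kappa\norm{y_k-\yb}$. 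This is precisely inner calmness w.r.t.\ $\dom M$ at $(\yb,\xb)$, with the same constant and the same base point $\xb$.

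For part (ii) I would first record two structural consequences. Inserting $y=\yb$ into the isolated calmness estimate at $(\yb,x)$ shows $M(\yb)\cap U_x=\{x\}$, i.e.\ every $x\in M(\yb)$ is an isolated point; since local closedness of $\gph M$ around $\{\yb\}\times\R^n$ makes $M(\yb)$ closed, $M(\yb)$ is a closed discrete set, so $M(\yb)\cap\rho\B$ is finite for every $\rho>0$. Next, processing any sequence $(y_k)\subset\dom M$ with $y_k\to\yb$ through inner semicompactness produces a limit point $\bar x\in M(\yb)$ together with a selection $x_k\to\bar x$, and isolated calmness at $(\yb,\bar x)$ eventually bounds $\norm{x_k-\bar x}$ by $\kappa_{\bar x}\norm{y_k-\yb}$. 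The obstacle, and the whole point of the statement, is that $\kappa_{\bar x}$ a priori depends on $\bar x$ and that $M(\yb)$ may be unbounded, so these moduli must be shown to stay uniformly bounded.

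The crux is therefore the following localization claim: there is a single radius $\rho>0$ such that \emph{every} sequence $(y_k)\subset\dom M$ with $y_k\to\yb$ admits a convergent selection whose limit lies in $\rho\B$. I expect this to be the main difficulty, and I would argue it by contradiction. If it fails, then for each $j\in\N$ there is a sequence $(y_k)\subset\dom M$, $y_k\to\yb$, all of whose convergent selections have limits of norm exceeding $j$. Along such a sequence one must have $M(y_k)\cap j\B=\varnothing$ for all large $k$: otherwise, picking $x_k\in M(y_k)\cap j\B$ for infinitely many $k$ would yield a bounded selection clustering, by closedness of $\gph M$, at a limit in $M(\yb)\cap j\B$, contradicting the choice of the sequence. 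Choosing for each $j$ an index $k_j$ with $\norm{y_{k_j}-\yb}<1/j$ and $M(y_{k_j})\cap j\B=\varnothing$, I obtain a single sequence $z_j:=y_{k_j}\to\yb$ in $\dom M$ every point of whose image has norm at least $j$; consequently any selection over $(z_j)$ diverges, contradicting inner semicompactness of $M$ at $\yb$ w.r.t.\ $\dom M$.

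Granting the claim, the argument closes quickly. For an arbitrary sequence $(y_k)\subset\dom M$ with $y_k\to\yb$, the localization claim lets me take the inner-semicompactness limit point $\bar x$ in the finite set $M(\yb)\cap\rho\B=\{p_1,\dots,p_N\}$, together with a selection $x_k\to\bar x$. Setting $\kappa^\ast:=\max_{1\le i\le N}\kappa_{p_i}<\infty$, where $\kappa_{p_i}$ is the isolated calmness modulus at $(\yb,p_i)$, the estimate from the second paragraph reads $\norm{x_k-\bar x}\leq\kappa^\ast\norm{y_k-\yb}$ for all large $k$. As $\kappa^\ast$ is independent of the chosen sequence, this establishes inner calmness* of $M$ at $\yb$ w.r.t.\ $\dom M$.
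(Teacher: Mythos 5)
Your proof is correct. Part (i) coincides with the paper's own argument (match the inner-semicontinuous selection against the isolated calmness estimate on the product of the two neighborhoods), but your part (ii) takes a genuinely different route to the same end. The paper enumerates $M(\yb)=\{x^l\mv l\in N\}$ as a countable discrete set, attaches to every sequence $(y_k)$ its \emph{minimal} cluster index $l(y_k):=\min\{l\in N\mv \liminf_k\dist(x^l,M(y_k))=0\}$, and shows via a diagonal sequence contradicting inner semicompactness that the set $\widetilde N$ of indices realizable as $l(y_k)$ is finite, taking the maximum of the isolated calmness constants over $\widetilde N$. You instead extract a clean, sequence-independent localization claim --- there is one radius $\rho>0$ such that every sequence in $\dom M$ converging to $\yb$ admits a convergent selection with limit in $\rho\B$ --- proved by the same diagonal device (if it failed, the sets $M(y^j_k)\cap j\B$ would be eventually empty, and the diagonal sequence $z_j:=y^j_{k_j}$ would admit no convergent selection at all), and then you get finiteness for free, since $M(\yb)$ is closed with all points isolated, so $M(\yb)\cap\rho\B$ is finite. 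Both proofs run on the same engine, namely a diagonal contradiction against inner semicompactness, but decompose differently: your intermediate claim avoids the paper's minimal-index bookkeeping with the radii $\varepsilon_l/2$ and $\varepsilon_l/4$, and it has independent interest --- it is equivalent to saying $M(y)\cap\rho'\B\neq\varnothing$ for all $y\in\dom M$ near $\yb$, i.e., inner semicompactness w.r.t.\ the domain is automatically uniform near $\yb$. The paper's construction, in exchange, tracks exactly which points of $M(\yb)$ actually occur as cluster points, so its finite set $\widetilde N$ can be strictly smaller than your $M(\yb)\cap\rho\B$ and may yield a marginally sharper constant $\kappa^\ast$; this is immaterial for the qualitative assertion, since (as the paper itself remarks after the lemma) both constructions only produce an upper bound on the true inner calmness* modulus.
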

\begin{proof}
		Let us start with the proof of (i).
		Since $M$ is isolatedly calm at $(\yb,\xb)$, 
		we find a constant $\kappa>0$ and neighborhoods $U$ of $\xb$ as well as $V$ of $\yb$
		satisfying \eqref{eq:isolated_calmness}. 
		Given $(y_k)\subset\dom M$ with $(y_k)\to\yb$, 
		we find a sequence $(x_k)\to\xb$ satisfying $x_k \in M(y_k)$ for all $k\in\N$ 
		by inner semicontinuity of $M$ at $(\yb,\xb)$ w.r.t.\ $\dom M$.
		Due to \eqref{eq:isolated_calmness}, we find $\norm{x_k-\bar x}\leq\kappa\norm{y_k-\bar x}$
		for large enough $k\in\N$ showing the inner calmness of $M$ at $(\yb,\xb)$ w.r.t.\ $\dom M$.\\
		Next, we show (ii).
		First, since points $x \in M(\yb)$ are isolated, there can be only countably
		many of them, i.e., $M(\yb)=\{x^l \mv l \in N\}$ for some $N \subset \N$.
		For each $l \in N$ let $\varepsilon_l > 0$ be such that
		$M(\yb) \cap (x^l + \varepsilon_l \B) = \{x^l\}$.
		By inner semicompactness of $M$ at $\yb$ w.r.t.\ $\dom M$,
		for every $(y_k)\subset\dom M$ with $(y_k)\to\yb$, 
		we find a sequence $(x_k)$ which converges along a subsequence
		to some $\tilde x\in \R^n$ such that $x_k\in M(y_k)$ holds along this subsequence.
		By closedness of $\gph M$ around $\{\yb\}\times\R^n$, we find $\tilde x = x^l$
		for some $l \in N$, and we denote by $l(y_k)$ the smallest of these numbers, i.e., 
		\[
		l(y_k) := \min\left\{l \in N\,\middle|\, \liminf_{k \to \infty} \dist(x^l,M(y_k)) = 0\right\}.
		\]
		In particular, for each $j \in N$ with $j < l(y_k)$, we get
		$\liminf_{k \to \infty} \dist(x^j,M(y_k)) \geq \varepsilon_j/2 > 0$.
		Let us now prove that the set
		\[
		    \widetilde N := \{l \in N \mv  \exists\,(y^l_k) \subset \dom M\colon\,
		    (y^l_k) \to \yb, l = l(y^l_k)\}
		\]
		is finite.
		By contraposition, suppose that $\widetilde N$ is infinite and
		for each $l \in \widetilde N$, let $k_l$ be such that
		$\norm{y^l_{k_l} - \yb} \leq 1/l$ and
		$\dist(x^j,M(y^l_{k_l})) \geq \varepsilon_j/4 > 0$ for all $j \in N$ with $j < l$.
		Consider the sequence $(y^l_{k_l}) \subset \dom M$
		for indices $l \in \widetilde N$. By construction, $(y^l_{k_l}) \to \yb$ holds as $l\to\infty$.
		Then, for each $j \in N$, we have $\dist(x^j,M(y^l_{k_l})) \geq \varepsilon_j/4 > 0$
		for all $l \in \widetilde N$ with $l > j$ and, consequently,
		$\liminf_{l \to \infty} \dist(x^j,M(y^l_{k_l})) \geq \varepsilon_j/4 > 0$ follows.
		This, however, contradicts the assumed inner semicompactness of $M$.\\
		The rest of the proof now follows easily.
		Since $M$ is isolatedly calm at $(\yb,x^l)$ for each $l \in \widetilde N$, we find
		constants $\kappa_{l}>0$ and neighborhoods $U_{l}$ of $x^l$ as well as $V_{l}$ of
		$\yb$ such that \eqref{eq:isolated_calmness} holds with
		$\kappa:=\max_{l \in \widetilde N}\kappa_{l}$, 
		$U:=U_{l}$, and $V:=V_{l}$ for each $l \in \widetilde N$.
		Consider a sequence $(y_k)\subset\dom M$ with $(y_k)\to\yb$
		together with the corresponding point $x^{l(y_k)} \in M(\yb)$ and
		sequence $(x_k)$ such that $x_k \in M(y_k)$ and $(x_k) \to x^{l(y_k)}$
		hold along a subsequence.
		Since $l(y_k) \in \widetilde N$ holds, we obtain
		$\norm{x_k-x^{l(y_k)}}\leq \kappa \norm{y_k-\yb}$, showing that
		$M$ is inner calm* at $\yb$ w.r.t.\ $\dom M$.
\end{proof}
We would like to point out again that the actual modulus of inner calmness (inner calmness*) 
can be smaller than the (supremum of the) underlying modulus (moduli) of isolated calmness. 
Exemplary, by means of the set-valued mapping $M\colon\R\tto\R$ given by 
$M(y):=\{k(y+1)\,|\,k\in\N\}$ for each
$y\in\R$, one can easily check that the assumptions of setting (ii) hold at $\bar y:=0$. 
Particularly, $M$ is isolatedly calm at each point $(0,k)$, $k\in\N$, with modulus $k$. 
The supremum of all these moduli is, obviously, not finite. 
On the other hand, $M$ is inner calm* at $\bar y$ with modulus $1$.

The main reason why we consider isolated calmness and the Aubin property is that
these conditions can be characterized via generalized derivatives as follows.
For a set-valued mapping $M\colon\R^m\tto\R^n$ possessing locally closed
graph around $(\yb,\xb)\in\gph M$, isolated calmness of $M$ at $(\yb,\xb)$ 
is equivalent to 
\begin{equation}\label{eq:ICcrit}\tag{LRC}
	DM(\yb,\xb)(0)=\{0\},
\end{equation}
see \cite{Levy96}, which is referred to as Levy--Rockafellar criterion in the literature.
On the other hand, $M$ possesses the Aubin property at $(\yb,\xb)$ if and only if
\begin{equation}\label{eq:Mcrit}\tag{MC}
	D^*M(\yb,\xb)(0)=\{0\} 
\end{equation}
is valid, see \cite[Theorem 9.40]{RoWe98}, and the latter is known as
Mordukhovich criterion. We note that the Aubin property can be characterized 
in terms of the graphical derivative as well,
see \cite[Theorem~7.5.4]{AubinEkeland1984} and \cite[Theorem 4B.2]{DoRo14}.

Consider a single-valued continuous function $\varphi\colon \R^n \to \R^m$.
For arbitrary $\bar x\in\R^n$, it is well known that the Aubin property 
of $\varphi$ at $(\bar x,\varphi(\bar x))$
equals local Lipschitzness of $\varphi$ at $\bar x$ which is, thus, characterized
by $D^*\varphi(\xb)(0)=\{0\}$.
On the other hand, since $\varphi$
is inner semicontinuous at each point of its graph,
calmness, inner calmness, and isolated calmness coincide
and correspond to the standard definition of calmness of single-valued mappings,
see \cref{sec:stability_notions}.
Consequently, we obtain the following simple corollary,
see also \cite[Proposition 9.24]{RoWe98}.
\begin{corollary}\label{Cor:S-V_calmness_char}
Fix $\bar x\in\R^n$.
A continuous function $\varphi\colon \R^n \to \R^m$
is calm at $\xb$ if and only if $D\varphi(\xb)(0)=0$ holds.
\end{corollary}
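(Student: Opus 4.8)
The plan is to obtain the corollary by chaining together two equivalences already available in the text, rather than arguing from scratch. First I would record that $D\varphi(\xb)$ is well defined at $(\xb,\varphi(\xb))$: continuity of $\varphi$ renders $\gph\varphi$ closed, so in particular it is locally closed around this point. I would also fix the reading of the (mild abuse of) notation in the statement. Since $(0,0)\in T_{\gph\varphi}(\xb,\varphi(\xb))$ always holds, one has $0\in D\varphi(\xb)(0)$ unconditionally, so that ``$D\varphi(\xb)(0)=0$'' is to be understood as $D\varphi(\xb)(0)=\{0\}$, i.e.\ as the assertion that there is no nonzero $u$ with $(0,u)\in T_{\gph\varphi}(\xb,\varphi(\xb))$.

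The core of the argument is then a two-step reduction. As observed in the discussion preceding the corollary, a continuous single-valued $\varphi$ is inner semicontinuous at every point of its graph, whence calmness, inner calmness, and isolated calmness of $\varphi$ at $(\xb,\varphi(\xb))$ all coincide and reduce to calmness of $\varphi$ at $\xb$ in the single-valued sense. Consequently, calmness of $\varphi$ at $\xb$ is equivalent to isolated calmness of $\varphi$ at $(\xb,\varphi(\xb))$. The second step is to invoke the Levy--Rockafellar criterion~\eqref{eq:ICcrit}, which characterizes isolated calmness at $(\xb,\varphi(\xb))$ precisely by $D\varphi(\xb)(0)=\{0\}$. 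Composing the two equivalences delivers the claim.

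Since this is a direct corollary, I do not expect any genuine obstacle; the only point demanding a little care is the bookkeeping in the previous paragraph, namely that calmness of a single-valued continuous map really does match \emph{isolated} calmness (and not merely calmness of set-valued type), which is exactly what inner semicontinuity buys us. If instead a self-contained proof were preferred, the nontrivial direction is ``$D\varphi(\xb)(0)=\{0\}$ $\Rightarrow$ calm''. Here I would argue by contraposition: failure of calmness yields a sequence $x_k\to\xb$ with $\norm{\varphi(x_k)-\varphi(\xb)}/\norm{x_k-\xb}\to\infty$, and the decisive trick is to rescale by $s_k:=\norm{\varphi(x_k)-\varphi(\xb)}\downarrow 0$ rather than by $\norm{x_k-\xb}$. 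Writing $(x_k,\varphi(x_k))=(\xb,\varphi(\xb))+s_k(w_k,z_k)$ with $\norm{z_k}=1$ and $\norm{w_k}=\norm{x_k-\xb}/s_k\to 0$, a subsequence gives $(w_k,z_k)\to(0,z)$ with $\norm{z}=1$, hence $z\in D\varphi(\xb)(0)\setminus\{0\}$, contradicting the hypothesis; the reverse implication is immediate from the definition of the tangent cone and the single-valued calmness estimate.
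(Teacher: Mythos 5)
Your two-step reduction---identifying calmness of a continuous single-valued map with isolated calmness at $(\xb,\varphi(\xb))$ via the inner semicontinuity remark, then invoking the Levy--Rockafellar criterion \eqref{eq:ICcrit}---is exactly the argument the paper gives in the paragraph preceding the corollary, so your proposal is correct and takes the same route. Your appended self-contained contraposition (rescaling by $s_k:=\norm{\varphi(x_k)-\varphi(\xb)}$ to produce a nonzero $z\in D\varphi(\xb)(0)$) is also sound, but it is a bonus rather than a deviation.
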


Exemplary, consider the continuous function $\varphi\colon\R\to\R$ given by 
$\varphi(x):=x^{3/2}\,\sin(1/x)$ for all $x\in\R\setminus\{0\}$ and $\varphi(0):=0$. 
We find $D^*\varphi(0)(0)=\R$ and $D\varphi(0)(0)=\{0\}$, i.e., $\varphi$ is calm
at $0$ but not Lipschitz at $0$.

Let us now review \cref{The : IC*calc} in the light of isolated calmness and the Aubin property.
\Cref{Lem:ICtoIC} yields the following result in terms of isolated calmness.
\begin{corollary}\label{Cor : ICcalc}
    Assume that $M\colon\R^m \tto \R^n$ has locally closed graph around
	$\{\yb\}\times \R^n$ for some $\yb\in\dom M$, that $M$ is inner semicompact at $\yb$
	w.r.t.\ $\dom M$, and that
	\eqref{eq:ICcrit} holds for each $\xb \in M(\yb)$. 
	Let $v \in \R^m$ be an arbitrary direction.
	Then we have the relations
	\begin{align*}
	    T_{\dom M}(\bar y)	& =  \bigcup_{\xb \in M(\yb)} \dom DM(\yb,\xb),\\
		\widehat{N}_{\dom M}(\yb) & = 	\bigcap_{\xb \in M(\yb)} \widehat{D}^*M(\yb,\xb)(0),\\
		N_{\dom M}(\yb) & \subset \bigcup_{\xb \in M(\yb)} D^*M(\yb,\xb)(0),\\
		N_{\dom M}(\yb;v) & \subset  \bigcup_{\xb \in M(\yb)}
			\bigcup_{u \in DM(\yb,\xb)(v)} D^*M((\yb,\xb);(v,u))(0).
	\end{align*}
\end{corollary}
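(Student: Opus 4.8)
The plan is to read Corollary~\ref{Cor : ICcalc} as a direct specialization of \cref{The : IC*calc}, so that the whole task reduces to checking that the present hypotheses supply the inner calmness* conditions required there. First I would invoke the Levy--Rockafellar criterion: since \eqref{eq:ICcrit} is assumed at every $\xb\in M(\yb)$, the mapping $M$ is isolatedly calm at each $(\yb,\xb)$ with $\xb\in M(\yb)$. Together with the standing inner semicompactness of $M$ at $\yb$ w.r.t.\ $\dom M$ (and the local closedness of $\gph M$ around $\{\yb\}\times\R^n$), \cref{Lem:ICtoIC}(ii) then yields that $M$ is inner calm* at $\yb$ w.r.t.\ $\dom M$ in the full, i.e.\ non-directional and non-fuzzy, sense. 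This is the one step where the actual work happens, but it is already carried out in \cref{Lem:ICtoIC}.

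With full inner calmness* secured, the four assertions follow by reading off the matching parts of \cref{The : IC*calc}. For the tangent and regular-normal equalities I would observe that full inner calmness* is in particular inner calmness* in the fuzzy sense: given $v\in T_{\dom M}(\yb)$, one picks a sequence in $\dom M$ converging to $\yb$ from $v$ and applies the full inner calmness* estimate along it. Hence the ``opposite inclusions'' in statements $\mathbf T$ and $\mathbf{\widehat N}$ of \cref{The : IC*calc} hold, and combined with the always-valid inclusions they deliver the claimed set equalities $T_{\dom M}(\yb)=\bigcup_{\xb\in M(\yb)}\dom DM(\yb,\xb)$ and $\widehat N_{\dom M}(\yb)=\bigcap_{\xb\in M(\yb)}\widehat D^*M(\yb,\xb)(0)$. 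The limiting-normal inclusion is then nothing but statement $\mathbf N$ of \cref{The : IC*calc}, which requires only the assumed inner semicompactness.

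For the directional limiting normals I would argue with slightly more care, because the bare inner semicompactness version of statement $\mathbf{dN}$ still carries the extra union over $DM(\yb,\xb)(0)\cap\Sp$. Here I would use that inner semicompactness at $\yb$ w.r.t.\ $\dom M$ restricts to inner semicompactness in the fixed direction $v$, so the inclusion of $\mathbf{dN}$ applies, and that full inner calmness* restricts to inner calmness* at $\yb$ in direction $v$ w.r.t.\ $\dom M$. The concluding remark of statement $\mathbf{dN}$ then renders the union over $DM(\yb,\xb)(0)\cap\Sp$ superfluous, leaving exactly $N_{\dom M}(\yb;v)\subset\bigcup_{\xb\in M(\yb)}\bigcup_{u\in DM(\yb,\xb)(v)}D^*M((\yb,\xb);(v,u))(0)$.

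Since everything substantive lives in \cref{Lem:ICtoIC} and \cref{The : IC*calc}, I expect no genuine obstacle; the proof is essentially bookkeeping. The only point deserving a moment's attention is confirming the two elementary implications from the definitions, namely that full inner calmness* entails the fuzzy version (needed for the primal and regular-dual equalities) and the directional version in every $v$ (needed to discard the spurious term in $\mathbf{dN}$), both of which are immediate.
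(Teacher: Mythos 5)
Your proposal is correct and follows exactly the paper's intended route: the corollary is stated immediately after \cref{Lem:ICtoIC} precisely because combining the Levy--Rockafellar characterization of \eqref{eq:ICcrit} with \cref{Lem:ICtoIC}(ii) yields full inner calmness* of $M$ at $\yb$ w.r.t.\ $\dom M$, after which the four relations are read off from the corresponding statements of \cref{The : IC*calc}, including the observation that full inner calmness* restricts to the fuzzy and directional versions and that inner semicompactness restricts to each direction $v$. The only bookkeeping detail you leave implicit is that the local closedness of $\dom M$ required in \cref{The : IC*calc} is automatic from the inner semicompactness assumption, a point the paper itself records right after that theorem.
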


Let us now revisit \cref{The : IC*calc} in terms of the Aubin property whose presence is
also sufficient for validity of inner calmness.
Clearly, if $M\colon\R^m\tto\R^n$ possesses the Aubin property at some point
$(\yb,\xb)\in\gph M$ where $\gph M$ is locally closed, then, on the one hand,
we find $\yb\in\inn\dom M$, i.e., $T_{\dom M}(\yb)=\R^m$ and
$\widehat N_{\dom M}(\yb)=N_{\dom M}(\yb)=N_{\dom M}(\yb;v)=\{0\}$ hold for all
$v\in\R^m$. On the other hand,
\cref{The : IC*calc} reproduces these trivial relations for limiting normals and directional
limiting normals, taking into account that the estimates
hold without the union (i.e., with the fixed $\xb$).
Observing that the regular coderivative is a subset of the limiting one,
we also recover $\widehat N_{\dom M}(\yb)=\{0\}$.
Finally, from $N_{\dom M}(\yb)=\{0\}$, we find
$\R^m=\left(N_{\dom M}(\yb)\right)^\circ\subset	T_{\dom M}(\yb)$
where the last inclusion follows from \cite[Theorem~6.26, Exercise~6.38]{RoWe98}.

The situation is quite similar if we look at \cref{The : Ccalc}
assuming that $M$ is isolatedly calm at $(\yb,\xb)$. In this case, \eqref{eq:isolated_calmness}
guarantees that $\xb$ is an isolated point of $M(\yb)$ which shows
$T_{M(\yb)}(\xb)=\{0\}$ and $\widehat{N}_{M(\yb)}(\xb)=N_{M(\yb)}(\xb)=N_{M(\yb)}(\xb;0)=\R^n$.
Using the Levy--Rockafellar criterion \eqref{eq:ICcrit}, \cref{The : Ccalc} recovers this
observation for tangents. This yields $\widehat{N}_{M(\yb)}(\xb)=\R^n$ and, thus,
$N_{M(\yb)}(\xb)=N_{M(\yb)}(\xb;0)=\R^n$ (note that $0$ is the only vector in $T_{M(\yb)}(\xb)$).
Thus, \cref{The : Ccalc} implies $\dom D^*M(\yb,\xb)=\dom D^*M((\yb,\xb);(0,0))=\R^n$.
Observe that we do not obtain any information about the domain of the regular coderivative
$\widehat{D}^*M(\yb,\xb)$. Exemplary, take a look at the mappings $M_1,M_2\colon\R\tto\R$ given by
$M_1(y):=[-|y|,|y|]$ and $M_2(y):=[-y^2,y^2]$ for all $y\in\R$. Clearly, we have
$DM_i(0,0)(0)=\{0\}$ for $i=1,2$, i.e., \eqref{eq:ICcrit} holds. On the other hand, we have
$\dom \widehat{D}^*M_1(0,0)=\{0\}$ and $\dom\widehat{D}^*M_2(0,0)=\R$. Thus, \eqref{eq:ICcrit}
does not yield a sharp estimate for regular normals.
Another way to look at this situation is that due to the fact that isolated calmness
of $M$ at $(\yb,\xb)$ yields the isolatedness of $\xb$ in $M(\yb)$ and, thus, $T_{M(\yb)}(\xb)=\{0\}$,
\cref{The : Ccalc} shows that \eqref{eq:ICcrit} is indeed necessary for isolated calmness.

Using the Aubin property in \cref{The : Ccalc} yields the following estimates.
\begin{corollary}\label{Cor : APcalc}
    Assume that $M\colon\R^m\tto\R^n$ possesses a locally closed graph around
    $(\bar y,\bar x)\in\gph M$ and that \eqref{eq:Mcrit} is valid.
    Let $u \in \R^n$ be an arbitrary direction.
    Then the following relations hold:
    \begin{align*}
	    T_{M(\bar y)}(\bar x) & =  DM(\bar y,\bar x)(0),\\
		\widehat{N}_{M(\bar y)}(\bar x) & \supset  -\dom \widehat{D}^*M(\bar y,\bar x),\\
		N_{M(\bar y)}(\bar x) & \subset  -\dom D^*M(\bar y,\bar x),\\
		N_{M(\bar y)}(\bar x;u) & \subset  -\dom D^*M((\bar y,\bar x);(0,u)).
	\end{align*}
\end{corollary}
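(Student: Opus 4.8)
The plan is to read the corollary off directly from \cref{The : Ccalc} once we have verified that the hypothesis \eqref{eq:Mcrit} supplies exactly the calmness assumptions that the four items of that theorem require. The starting observation is that \eqref{eq:Mcrit}, namely $D^*M(\bar y,\bar x)(0)=\{0\}$, is precisely the Mordukhovich criterion, hence equivalent to $M$ possessing the Aubin property at $(\bar y,\bar x)$. Since it was recalled earlier in the section that the Aubin property implies calmness at the same point (with not larger modulus), we immediately obtain that $M$ is calm at $(\bar y,\bar x)$ with some modulus $\bar\kappa>0$. Thus no further work is needed to activate the calmness-dependent conclusions of \cref{The : Ccalc}; the remaining task is purely bookkeeping.

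I would then traverse the four items in turn. For $\mathbf{T}$, the inclusion $T_{M(\bar y)}(\bar x)\subset DM(\bar y,\bar x)(0)$ holds unconditionally and the reverse inclusion holds under calmness, so calmness of $M$ yields the asserted equality $T_{M(\bar y)}(\bar x)=DM(\bar y,\bar x)(0)$. For $\mathbf{\widehat N}$, the inclusion $\widehat N_{M(\bar y)}(\bar x)\supset-\dom\widehat D^*M(\bar y,\bar x)$ is stated in \cref{The : Ccalc} without any stability assumption, so it carries over verbatim. For $\mathbf{N}$, calmness with modulus $\bar\kappa$ gives $N_{M(\bar y)}(\bar x)\subset-\dom D^*M(\bar y,\bar x)$, which is exactly the second (coarser) of the two nested inclusions provided there; I simply retain that weaker inclusion, discarding the quantitative refinement involving $\bar\kappa$.

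The only point deserving a line of attention is the directional statement $\mathbf{dN}$, which is phrased in \cref{The : Ccalc} under \emph{directional} calmness in the fixed direction $u$, whereas our hypothesis delivers full calmness. Here I would argue that full calmness implies calmness in every direction $u$ with modulus $\bar\kappa_u\le\bar\kappa$: directional calmness merely restricts the defining estimate to the subclass of sequences $(x_k)\to\bar x$ approaching from $u$, so any constant that works for all sequences works a fortiori for this subclass. Feeding this into $\mathbf{dN}$ and again keeping only the coarser inclusion yields $N_{M(\bar y)}(\bar x;u)\subset-\dom D^*M((\bar y,\bar x);(0,u))$ for each $u\in\R^n$, completing the four relations. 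I do not anticipate any genuine obstacle: the corollary is a direct specialization, and the sole subtlety is the elementary observation that calmness passes to every direction, which requires no computation.
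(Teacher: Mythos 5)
Your proposal is correct and coincides with the paper's own (implicit) argument: the paper derives \cref{Cor : APcalc} precisely by noting that \eqref{eq:Mcrit} characterizes the Aubin property, which implies calmness (and hence calmness in every direction $u$), and then specializing the four items of \cref{The : Ccalc}, discarding the quantitative modulus bounds. Your explicit remark that full calmness passes to each direction with $\bar\kappa_u\le\bar\kappa$ is the only step the paper leaves unsaid, and it is handled correctly.
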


Finally, we consider the \emph{first-order sufficient condition for calmness} (FOSCclm for short) given by
\begin{equation}\label{eq:FOSCclm}\tag{FOSCclm}
	D^*M((\yb,\xb);(0,u))(0)=\{0\}\quad \forall u \in DM(\yb,\xb)(0)\setminus\{0\}.
\end{equation}
As mentioned in \cref{sec:introduction}, this condition is nothing else but the calmness counterpart of
Gfrerer's FOSCMS.
The following lemma and its proof justify this fact.
\begin{lemma}\label{lem:FOSCclm}
    Let $M\colon\R^m\tto\R^n$ be a set-valued mapping with locally closed graph around 
    $(\yb,\xb)\in\gph M$. Furthermore, let \eqref{eq:FOSCclm} be valid at $(\yb,\xb)$.
    Then $M$ is calm at $(\yb,\xb)$.
\end{lemma}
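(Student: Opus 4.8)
The plan is to reduce the assertion to the sufficiency of Gfrerer's first-order condition for metric subregularity (FOSCMS) by passing to the inverse mapping. Recall from the discussion following \eqref{eq:standard_definition_calmness} that calmness of $M$ at $(\yb,\xb)$ is precisely metric subregularity of $M^{-1}$ at $(\xb,\yb)$, see \cite[Chapter~3]{DoRo14}, and that this equivalence carries over to the directional refinements, see \cite{Gfr13a}. Hence it suffices to show that \eqref{eq:FOSCclm} imposed on $M$ at $(\yb,\xb)$ is \emph{verbatim} FOSCMS imposed on $F:=M^{-1}$ at $(\xb,\yb)$; Gfrerer's theorem that FOSCMS implies metric subregularity, see \cite{Gfr13a}, \cite[Corollary~1]{GfrKl16}, and \cite[Section~2.2]{GfrOut16}, then closes the argument. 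Observe first that local closedness of $\gph M$ around $(\yb,\xb)$ transfers to local closedness of $\gph F$ around $(\xb,\yb)$, since the two graphs are related by the coordinate swap $(y,x)\mapsto(x,y)$, which is a linear isometry.

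The core of the proof is therefore a translation dictionary between the generalized derivatives of $M$ and those of $F=M^{-1}$. Writing $S$ for the coordinate swap and using $\gph F=S(\gph M)$ together with the transformation of contingent cones and directional limiting normal cones under a linear isometry, one checks the two correspondences
\[
	(u,0)\in T_{\gph F}(\xb,\yb)
	\quad\Longleftrightarrow\quad
	u\in DM(\yb,\xb)(0)
\]
and, for every $\eta$,
\[
	0 \in D^*F((\xb,\yb);(u,0))(\eta)
	\quad\Longleftrightarrow\quad
	-\eta \in D^*M((\yb,\xb);(0,u))(0).
\]
Because $\eta\mapsto-\eta$ is a bijection fixing the origin, the FOSCMS requirement for $F$ at $(\xb,\yb)$ --- namely that $0\in D^*F((\xb,\yb);(u,0))(\eta)$ forces $\eta=0$ for every nonzero $u$ with $(u,0)\in T_{\gph F}(\xb,\yb)$ --- is exactly the statement that $D^*M((\yb,\xb);(0,u))(0)=\{0\}$ for all $u\in DM(\yb,\xb)(0)\setminus\{0\}$, which is \eqref{eq:FOSCclm}. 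Combining the two steps yields the claim: \eqref{eq:FOSCclm} forces FOSCMS for $M^{-1}$ at $(\xb,\yb)$, hence metric subregularity of $M^{-1}$ there, hence calmness of $M$ at $(\yb,\xb)$.

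I expect the only real obstacle to be bookkeeping: keeping the sign and coordinate conventions of the directional limiting coderivative consistent across the swap (the definition inserts a minus sign in the second slot, and $S$ exchanges the two slots), and, equally importantly, confirming that the \emph{critical direction set} appearing in FOSCMS --- the nonzero $u$ with $(u,0)\in T_{\gph F}(\xb,\yb)$ --- matches $DM(\yb,\xb)(0)\setminus\{0\}$ exactly, neither a larger nor a smaller cone. Once this dictionary is pinned down, no further analytic work is required, as the whole quantitative content is supplied by Gfrerer's sufficiency result for FOSCMS.
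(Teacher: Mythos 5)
Your proposal is correct and takes essentially the same route as the paper's proof: both identify calmness of $M$ at $(\yb,\xb)$ with metric subregularity of $M^{-1}$ at $(\xb,\yb)$, verify via the coordinate swap (cf.\ \cref{lem:change_of_coordinates}) that \eqref{eq:FOSCclm} is exactly Gfrerer's FOSCMS for $M^{-1}$, and then invoke the known sufficiency of FOSCMS for metric subregularity. Your translation dictionary, including the sign convention $-\eta\in D^*M((\yb,\xb);(0,u))(0)\Longleftrightarrow 0\in D^*M^{-1}((\xb,\yb);(u,0))(\eta)$ and the identification of the critical directions $u\in DM(\yb,\xb)(0)\setminus\{0\}$, matches the paper's argument verbatim.
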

\begin{proof}
    Recall that $M$ is calm at $(\yb,\xb)$ if and only if the inverse mapping $M^{-1}$
    is metrically subregular at $(\xb,\yb)$.
    Furthermore, due to \cite[Section~2.2]{GfrOut16}, the condition
    \begin{equation}\label{eq:FOSCMS}
    	0\in D^*M^{-1}((\xb,\yb);(u,0))(y^*) \ \Longrightarrow \ y^*=0
    	\quad\forall\, u \neq 0\colon\, \ 0 \in DM^{-1}(\xb,\yb)(u)
    \end{equation}
    is sufficient for metric subregularity of $M^{-1}$ at $(\xb,\yb)$.
    By the definition of the inverse mapping and \cref{lem:change_of_coordinates},
    we find
    $u \in DM(\yb,\xb)(0) 
     \ \Longleftrightarrow \
     0 \in DM^{-1}(\xb,\yb)(u)$
    for all $u\in\R^n$ and
    \begin{equation*}
        -y^* \in D^*M((\yb,\xb);(0,u))(0) 
        \ \Longleftrightarrow \
        0 \in D^*M^{-1}((\xb,\yb);(u,0))(y^*)
    \end{equation*}
    for all $y^*\in\R^m$, showing that \eqref{eq:FOSCclm} and
    \eqref{eq:FOSCMS} are equivalent and the proof is done.
\end{proof}
\noindent
Naturally, validity of \eqref{eq:FOSCclm} just for some $u$
implies calmness in direction $u$.

We point out that \eqref{eq:FOSCclm} is a refinement of \eqref{eq:ICcrit} and \eqref{eq:Mcrit}, i.e.,
it is implied by each of them.
It turns out that \eqref{eq:FOSCclm} also yields all the calculus estimates
and yet it is not too strong to make some of them irrelevant.
\begin{theorem}\label{thm:estimates_via_FOSCclm}
	Let $M\colon\R^m\tto\R^n$ be a given set-valued mapping and fix $\yb\in\dom M$.
	Then the following assertions hold.
	\begin{enumerate}
		\item[(i)] Assume that $M$ has locally closed graph around $\{\yb\}\times \R^n$,
			that $M$ is inner semicompact at $\yb$ w.r.t.\ $\dom M$, 
			and that \eqref{eq:FOSCclm} holds at $(\yb,\xb)$
			for each $\xb\in M(\yb)$. Then the relations from \cref{Cor : ICcalc} are valid.
		\item[(ii)] Fix $\xb\in M(\yb)$, let $M$ possess a locally closed graph around $(\yb,\xb)$,
			and assume that \eqref{eq:FOSCclm} holds at $(\yb,\xb)$.
    		Then the relations from \cref{Cor : APcalc} are valid.
   	\end{enumerate}
\end{theorem}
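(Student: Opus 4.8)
The plan is to treat the two items separately, since (ii) is essentially immediate while (i) contains the only real work. I would dispatch (ii) first: by \cref{lem:FOSCclm}, validity of \eqref{eq:FOSCclm} at $(\yb,\xb)$ forces $M$ to be calm at $(\yb,\xb)$, hence calm in every direction $u$. Feeding this into \cref{The : Ccalc} — its statements $\mathbf T$, $\mathbf{\widehat{N}}$, $\mathbf N$, $\mathbf{dN}$ under calmness — reproduces verbatim the four relations collected in \cref{Cor : APcalc}, the $\inf$-type bounds there being merely sharper. Nothing else is needed.

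For (i) I would separate the dual estimates from the primal ones. The limiting-normal inclusion (third relation of \cref{Cor : ICcalc}) is exactly statement $\mathbf N$ of \cref{The : IC*calc} and uses only inner semicompactness, so \eqref{eq:FOSCclm} plays no role there. For directional limiting normals, statement $\mathbf{dN}$ yields the inclusion but with the extra union over $u\in DM(\yb,\xb)(0)\cap\Sp$; here \eqref{eq:FOSCclm} enters exactly as intended, since each such $u$ lies in $DM(\yb,\xb)(0)\setminus\{0\}$, whence $D^*M((\yb,\xb);(0,u))(0)=\{0\}$ and the entire second union collapses to (at most) $\{0\}$. As $0\in D^*M((\yb,\xb);(v,u))(0)$ whenever $u\in DM(\yb,\xb)(v)$ — the zero vector always being a regular normal — this $\{0\}$ is absorbed by the first union as soon as $v\in\bigcup_{\xb}\dom DM(\yb,\xb)$, while for the remaining directions $N_{\dom M}(\yb;v)$ is empty. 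This reduces everything to the first two relations.

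The crux is therefore the tangent equality, from which the regular-normal equality follows by the very polarization used for statement $\mathbf{\widehat{N}}$ in \cref{The : IC*calc}. Only the inclusion $T_{\dom M}(\yb)\subset\bigcup_{\xb}\dom DM(\yb,\xb)$ requires proof, and by \cref{The:Equiv_GD_IC*fuzzy} this is equivalent to fuzzy inner calmness* of $M$ w.r.t.\ $\dom M$; so it suffices to upgrade \eqref{eq:FOSCclm} together with inner semicompactness to fuzzy inner calmness*. I would argue by contradiction: given $v\in T_{\dom M}(\yb)$, inner semicompactness produces $t_k\downarrow 0$, $v_k\to v$ and $x_k\in M(\yb+t_kv_k)$ with $x_k\to\xb\in M(\yb)$; if $(x_k-\xb)/t_k$ stays bounded we already obtain $v\in\dom DM(\yb,\xb)$, so the sole obstruction is the unbounded case, in which the normalized increments tend to some $\tilde u\in DM(\yb,\xb)(0)\cap\Sp$ and the graph points approach $(\yb,\xb)$ from the direction $(0,\tilde u)$.

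The main obstacle is to convert this primal ``escape'' into a dual contradiction, namely to exhibit a non-zero $y^*$ with $(y^*,0)\in N_{\gph M}((\yb,\xb);(0,\tilde u))$, i.e.\ $y^*\in D^*M((\yb,\xb);(0,\tilde u))(0)\setminus\{0\}$, in conflict with \eqref{eq:FOSCclm}. The delicate point is that the pertinent regular normals sit at moving base points $(\yb,\tilde x_k)$ with $\tilde x_k\in M(\yb)$ (supplied by the calmness from \cref{lem:FOSCclm}) accumulating at $\xb$ along $\tilde u$, so the qualitative coderivative form of \eqref{eq:FOSCclm} alone does not detect them; what must be exploited is the quantitative directional calmness/metric subregularity encoded by \eqref{eq:FOSCclm} — equivalently \eqref{eq:FOSCMS} for $M^{-1}$ — which provides a uniform modulus throughout a directional neighbourhood of $(0,\tilde u)$ and thereby forces the contradicting normal to exist. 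Once fuzzy inner calmness* is secured, the tangent and regular-normal equalities, and with them the absorption of the second union in the directional estimate, all fall into place.
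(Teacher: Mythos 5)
Your handling of part (ii) and of the two dual estimates in part (i) is sound and coincides with the paper's own proof: \eqref{eq:FOSCclm} yields calmness via \cref{lem:FOSCclm}, so \cref{The : Ccalc} gives \cref{Cor : APcalc}; the limiting estimate needs only inner semicompactness; and \eqref{eq:FOSCclm} collapses the second union in statement $\mathbf{dN}$ of \cref{The : IC*calc} to $\{0\}$. You are also right --- and more explicit than the paper --- that absorbing this $\{0\}$, and emptying $N_{\dom M}(\yb;v)$ for the remaining directions, reduces all of part (i) to the tangent inclusion $T_{\dom M}(\yb)\subset\bigcup_{\xb\in M(\yb)}\dom DM(\yb,\xb)$. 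The genuine gap is your final step. Having reached the escape case with $\tilde u\in DM(\yb,\xb)(0)\cap\Sp$, you assert that the ``quantitative directional calmness/metric subregularity encoded by \eqref{eq:FOSCclm}\ldots forces the contradicting normal to exist,'' but no nonzero $y^*\in D^*M((\yb,\xb);(0,\tilde u))(0)$ is ever constructed and no mechanism for producing one is given. The only quantitative consequence of \eqref{eq:FOSCclm} actually available (via \cref{lem:FOSCclm}) is calmness, which supplies points $\tilde x_k\in M(\yb)$ with $\norm{x_k-\tilde x_k}\leq\kappa t_k\norm{v_k}$ --- exactly the moving-base-point situation you yourself concede the coderivative condition does not detect; bridging from there to a violating directional coderivative element is an unproven implication, not a routine sharpening. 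Tellingly, the paper's concluding remarks list the closely related question (whether \eqref{eq:FOSCclm} plus inner semicompactness already yields inner calmness*) as open, so this primal contradiction route should not be expected to close by soft arguments.

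The paper avoids your impasse by reversing the order of deduction and never arguing in the primal at all: it first records the limiting and the collapsed directional estimates, and then \emph{extracts} the tangent inclusion from the latter --- for $v\in T_{\dom M}(\yb)$ one has $0\in N_{\dom M}(\yb;v)\neq\varnothing$, so the directional estimate of \cref{Cor : ICcalc} forces the existence of $\xb\in M(\yb)$ and $u\in DM(\yb,\xb)(v)$, i.e.\ $v\in\dom DM(\yb,\xb)$ --- after which the regular-normal equality follows by the polarization argument recycled from the proof of \cref{The : IC*calc}. In particular, \cref{The:Equiv_GD_IC*fuzzy} plays no role in the paper's proof; the logical traffic runs the other way, in \cref{cor:IC*_via_FOSCclm}, where fuzzy inner calmness* is \emph{deduced from} this theorem. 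Be aware, though, that the paper's extraction step leans on the directional estimate containing the element $y^*=0$, which is precisely the delicate case your absorption analysis isolated; so your scrutiny is well placed, but as submitted your proposal replaces the paper's short dual-first deduction with a primal argument whose decisive step is missing.
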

\begin{proof}
    Taking \cref{lem:FOSCclm} into account, assertion (ii) is obvious from \cref{The : Ccalc}.
    Thus, we only need to prove assertion (i).
    
    The estimate for the limiting normals holds true due to the assumed inner semicompactness
    of $M$ at $\yb$ w.r.t.\ $\dom M$, see \cref{The : IC*calc}.
    Similarly, inner semicompactness of $M$ at $\yb$ w.r.t.\ $\dom M$
    readily yields the more complicated estimate
    for the directional limiting normals presented in \cref{The : IC*calc}.
    However, \eqref{eq:FOSCclm} guarantees that
    \[
    	\bigcup_{u \in DM(\yb,\xb)(0) \cap \Sp} D^*M((\yb,\xb);(0,u))(0)
    \]
    reduces to $\{0\}$ for each $\xb\in M(\yb)$ which is why the simpler estimate from
    \cref{Cor : ICcalc} holds.
    
    Notably, this actually implies the other two relations for tangents and regular normals. 
    Indeed, given $v \in T_{\dom M}(\yb)$,
    in particular, we have $0\in N_{\dom M}(\yb;v) \neq \varnothing$ by definition of
    the directional limiting normal cone.
    Hence, the estimate for directional normals yields the existence of $\xb \in M(\yb)$
    together with $u \in DM(\yb,\xb)(v)$, i.e., $v \in \dom DM(\yb,\xb)$.
    Since the opposite inclusion is always valid, see \cref{The : IC*calc}, we are done.
    Finally, the estimate for regular normals follows from polarization, see the proof
    of \cref{The : IC*calc} as well.
\end{proof}

The question remains whether \eqref{eq:FOSCclm} only implies the estimates of \cref{Cor : ICcalc}
or also inner calmness* (in the fuzzy sense).
Thanks to the equivalence from \cref{The:Equiv_GD_IC*fuzzy}, we 
can indeed provide a fuzzy version of \cref{Lem:ICtoIC} in terms of \eqref{eq:FOSCclm}.
In order to handle the first statement of \cref{Lem:ICtoIC} as well,
we formally introduce the naturally arising concept of
\emph{inner calmness in the fuzzy sense} for the sake of completeness.
Expectedly, we say that a set-valued mapping $M\colon\R^m\tto\R^n$ 
is inner calm at $(\yb,\xb)\in\gph M$ w.r.t.\
$\dom M$ in the fuzzy sense if $\dom M$ is locally closed at $\yb$ while 
for each direction $v\in T_{\dom M}(\yb)\cap\Sp$, we find
a constant $\kappa_v>0$, a sequence $(y_k)\subset\dom M$ converging to $\yb$ from $v$,
and a sequence $(x_k)$ satisfying $x_k\in M(\yb)$ as well as 
$\norm{x_k-\bar x}\leq\kappa_v\norm{y_k-\yb}$ for sufficiently large $k\in\N$.
This notion follows from the definition of inner calmness* in the fuzzy sense, 
see \cref{def:inner_calmness*_in_fuzzy_sense}, by fixing $\xb\in M(\yb)$. It is, thus, not
surprising that one can state a similar result as in \cref{The:Equiv_GD_IC*fuzzy} in order to
characterize inner calmness in the fuzzy sense with the aid of the graphical derivative of
$M$ at $(\yb,\xb)$.

\begin{corollary}\label{cor:IC*_via_FOSCclm}
	Let $M\colon\R^m\tto\R^n$ be a set-valued mapping and fix $\yb\in\dom M$ such that $\gph M$
	is locally closed around $\{\yb\}\times\R^n$. 
	Then the following assertions hold.
	\begin{enumerate}
		\item[(i)] 	Fix a point $(\yb,\xb)\in\gph M$. If $M$ is inner semicontinuous at
				$(\yb,\xb)$ w.r.t.\ $\dom M$ while \eqref{eq:FOSCclm} holds at this point, then
				$M$ is inner calm in the fuzzy sense at $(\yb,\xb)$ w.r.t.\ $\dom M$.
		\item[(ii)]	Let $M$ be inner semicompact at $\yb$ w.r.t.\ $\dom M$
				and let \eqref{eq:FOSCclm} be valid
				at $(\yb,\xb)$ for each $\xb\in M(\yb)$.
				Then $M$ is inner calm* at $\yb$ w.r.t.\ $\dom M$ in the fuzzy sense.
	\end{enumerate}
\end{corollary}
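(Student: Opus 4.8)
The plan is to deduce both statements from the characterization of (fuzzy) inner calmness via the graphical derivative, exactly as \cref{The:Equiv_GD_IC*fuzzy} suggests. For \textbf{(ii)} I would argue entirely on the level of the results already available. The hypotheses imposed here --- local closedness of $\gph M$ around $\{\yb\}\times\R^n$, inner semicompactness of $M$ at $\yb$ w.r.t.\ $\dom M$ (which in addition yields local closedness of $\dom M$ near $\yb$), and validity of \eqref{eq:FOSCclm} at $(\yb,\xb)$ for every $\xb\in M(\yb)$ --- are precisely those of \cref{thm:estimates_via_FOSCclm}(i). Hence the relations of \cref{Cor : ICcalc} hold, in particular $T_{\dom M}(\yb)=\bigcup_{\xb\in M(\yb)}\dom DM(\yb,\xb)$. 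Feeding this into \cref{The:Equiv_GD_IC*fuzzy}, I only need to verify \eqref{eq:fic*_char} for each $v\in\Sp$: if $v\in T_{\dom M}(\yb)$, the tangent relation supplies some $\xb\in M(\yb)$ with $v\in\dom DM(\yb,\xb)$, so the double infimum is bounded by $\norm{u}<\infty$ for any admissible $u$. Thus \eqref{eq:fic*_char} holds in every direction and \cref{The:Equiv_GD_IC*fuzzy} delivers fuzzy inner calmness* of $M$ at $\yb$ w.r.t.\ $\dom M$.

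For \textbf{(i)} I would first record the non-starred analogue of \cref{The:Equiv_GD_IC*fuzzy}, obtained by repeating its proof verbatim with $\xb$ held fixed: $M$ is inner calm at $(\yb,\xb)$ in direction $v\in\Sp$ w.r.t.\ $\dom M$ in the fuzzy sense if and only if $v\in T_{\dom M}(\yb)$ implies $\bar\kappa_v=\inf_{u\in DM(\yb,\xb)(v)}\norm{u}/\norm{v}<\infty$. The whole claim then reduces to the single implication
\[
    v\in T_{\dom M}(\yb)\quad\Longrightarrow\quad v\in\dom DM(\yb,\xb)
\]
for the \emph{fixed} $\xb$. Here \eqref{eq:FOSCclm} is assumed only at $(\yb,\xb)$, so \cref{thm:estimates_via_FOSCclm}(i) is not directly applicable; the role of inner semicontinuity is exactly to pin the relevant limit point to $\xb$, so that only \eqref{eq:FOSCclm} at $(\yb,\xb)$ enters.

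To establish the implication I would mimic the dichotomy from the proof of statement $\mathbf{dN}$ of \cref{The : IC*calc}. Given $v\in T_{\dom M}(\yb)$ with $\norm{v}=1$, choose $(t_k)\downarrow 0$ and $(v_k)\to v$ with $y_k:=\yb+t_kv_k\in\dom M$; inner semicontinuity at $(\yb,\xb)$ w.r.t.\ $\dom M$ then produces $(x_k)\to\xb$ with $x_k\in M(y_k)$ for large $k$. If $((x_k-\xb)/t_k)$ has a bounded subsequence, its limit $u$ satisfies $(v,u)\in T_{\gph M}(\yb,\xb)$, i.e.\ $v\in\dom DM(\yb,\xb)$, and we are done. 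The decisive case is when $((x_k-\xb)/t_k)$ is unbounded: then $t_k/\norm{x_k-\xb}\to 0$ along a subsequence and $\tilde u:=\lim(x_k-\xb)/\norm{x_k-\xb}\in DM(\yb,\xb)(0)\cap\Sp$, so that $(y_k,x_k)\in\gph M$ approaches $(\yb,\xb)$ from the purely vertical direction $(0,\tilde u)$ while still carrying a genuine, lower-order horizontal drift $t_kv_k$ with $v\neq 0$.

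The main obstacle is to show that this configuration is incompatible with \eqref{eq:FOSCclm}. The intuition is that a genuine horizontal drift accompanying a super-linear vertical escape forces $\gph M$ to admit, along the approach in direction $(0,\tilde u)$, regular normals that become \emph{purely horizontal}; any nonzero limit $y^*$ of their $\yb$-components then yields $y^*\in D^*M((\yb,\xb);(0,\tilde u))(0)\setminus\{0\}$, contradicting \eqref{eq:FOSCclm} and ruling out the unbounded case. Making this precise is the technical heart: I would quantify the obstruction through the (directional) metric subregularity of $M^{-1}$ granted by \cref{lem:FOSCclm}, and extract the required normals by a projection/variational argument exploiting the strictly positive gap between $\xb$ and the escaping images $M(y_k)$. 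Once the implication is secured, the non-starred analogue of \cref{The:Equiv_GD_IC*fuzzy} gives inner calmness in the fuzzy sense; local closedness of $\dom M$ at $\yb$, needed for that notion, is handled by a simpler version of the bookkeeping in the proof of \cref{Lem:ICtoIC}(i), no selection over $M(\yb)$ being involved.
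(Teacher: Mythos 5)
Your part (ii) is correct and follows the paper's proof essentially verbatim: the paper likewise notes that inner semicompactness yields local closedness of $\dom M$ at $\yb$, invokes assertion (i) of \cref{thm:estimates_via_FOSCclm} to get the tangent identity $T_{\dom M}(\yb)=\bigcup_{\xb\in M(\yb)}\dom DM(\yb,\xb)$, and then concludes via \cref{The:Equiv_GD_IC*fuzzy}, observing that for $v\in T_{\dom M}(\yb)\cap\Sp$ the double infimum in \eqref{eq:fic*_char} runs over a nonempty set and is therefore finite (the paper's ``the latter, however, is obvious''). Although \eqref{eq:fic*_char} formally asserts an equality involving $\bar\kappa_v$, finiteness is all that the backward direction of \cref{The:Equiv_GD_IC*fuzzy} consumes, so your reading is the intended one.

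In part (i), your reduction matches the paper: the paper also records the non-starred analogue of \cref{The:Equiv_GD_IC*fuzzy} and reduces everything to the implication $v\in T_{\dom M}(\yb)\Rightarrow v\in\dom DM(\yb,\xb)$ for the fixed $\xb$. But your proof of that implication is not a proof. You run the bounded/unbounded dichotomy from the proof of statement $\mathbf{dN}$ of \cref{The : IC*calc} and, in the unbounded case, you explicitly leave the ``technical heart'' unresolved: the claim that superlinear vertical escape with a horizontal drift forces purely horizontal regular normals contradicting \eqref{eq:FOSCclm} is offered only as an intuition, and the announced projection/variational argument via the metric subregularity of $M^{-1}$ from \cref{lem:FOSCclm} is never carried out. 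Nor is it clear that calmness alone could carry it out: calmness of $M$ at $(\yb,\xb)$ merely produces points $\tilde x_k\in M(\yb)$ with $\norm{x_k-\tilde x_k}\leq\kappa\norm{y_k-\yb}$, i.e.\ bounded difference quotients based at the \emph{moving} points $(\yb,\tilde x_k)$, which does not yield an element of $T_{\gph M}(\yb,\xb)$. The paper closes this step by a different, dual detour (``similar arguments as in the proof of \cref{thm:estimates_via_FOSCclm}''): with $\xb$ fixed, inner semicontinuity gives the fixed-$\xb$ version of statement $\mathbf{dN}$ of \cref{The : IC*calc} (no union over $M(\yb)$), \eqref{eq:FOSCclm} collapses the union over $u\in DM(\yb,\xb)(0)\cap\Sp$ to $\{0\}$, and then, for $v\in T_{\dom M}(\yb)$, the relation $0\in N_{\dom M}(\yb;v)\neq\varnothing$ is exploited to extract some $u\in DM(\yb,\xb)(v)$, giving $T_{\dom M}(\yb)=\dom DM(\yb,\xb)$. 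That injection of \eqref{eq:FOSCclm} through directional limiting normals is the idea missing from your sketch. To be fair, your unease points at the genuinely delicate spot: even in the paper's argument the unbounded branch of the dichotomy only pins the limiting normal to $y^*=0$ and does not by itself place $0$ in the right-hand side of the simpler directional estimate, so the published proof is tersest exactly where your attempt breaks down; still, as a reconstruction of the paper's proof, your part (i) is incomplete, whereas the paper's route at least supplies the mechanism your proposal lacks.
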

\begin{proof}
	Note that by inner semicompactness of $M$ at $\yb$ w.r.t.\ $\dom M$, $\dom M$ is locally closed
	at $\yb$ in both situations.\\
	Let us start with the proof of (i).
	Inspecting \cref{The:Equiv_GD_IC*fuzzy}, it is easy to
	show that $M$ is inner calm at $(\yb,\xb)$ w.r.t.\ $\dom M$ in the fuzzy sense 
	if and only if
	\[
		v\in T_{\dom M}(\yb)\cap\Sp
		\quad \Longrightarrow \quad
		\inf_{u\in DM(\yb,\xb)(v)}\norm{u}/\norm{v}<\infty
	\]
	is valid. 
	Similar arguments as in the proof of \cref{thm:estimates_via_FOSCclm}
	can be used in order to infer that $T_{\dom M}(\yb)=\dom DM(\yb,\xb)$ holds
	due to the postulated inner semicontinuity of $M$ at $(\yb,\xb)$ w.r.t.\ $\dom M$ and 
	validity of \eqref{eq:FOSCclm} at this point. 
	Thus, the statement follows.\\		
	For the proof of (ii), due to \cref{The:Equiv_GD_IC*fuzzy} and assertion (i) 
	of \cref{thm:estimates_via_FOSCclm},
	it remains to show
	\[
		v\in\bigcup\limits_{\xb\in M(\yb)}\dom DM(\yb,\xb)\cap\Sp
		\ \Longrightarrow \
		\inf\limits_{\xb\in M(\yb)}\inf\limits_{u\in DM(\yb,\xb)(v)}\norm{u}/\norm{v}<\infty.
	\]
	The latter, however, is obvious. 
\end{proof}

In \cref{fig:calmness_conditions}, we summarize the findings of this section regarding the
relationship between all the introduced calmness-type conditions.

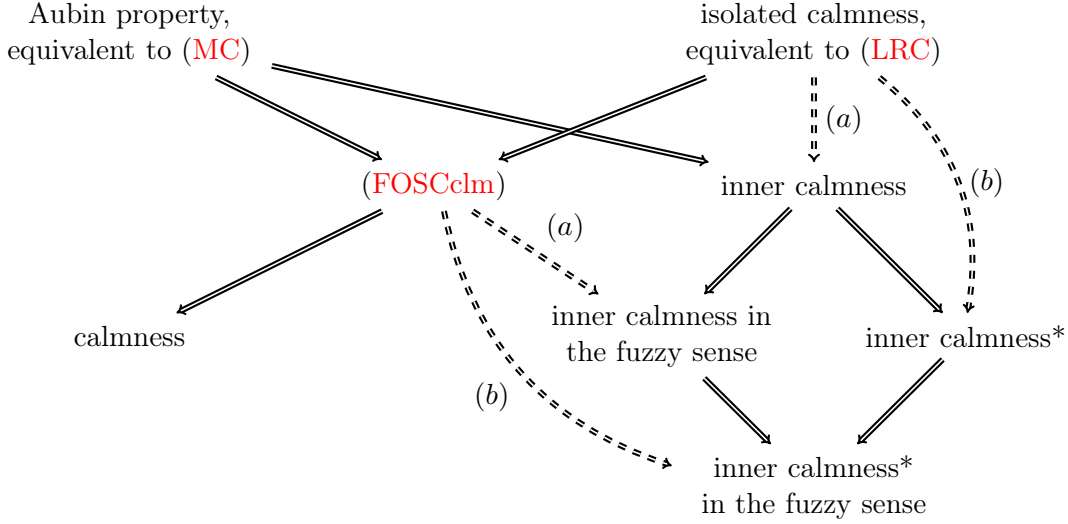
\begin{figure}[h]
\centering
\begin{tikzpicture}[->]

  \node[punkt] at (-1,0) 	(A){Aubin property, equivalent to \eqref{eq:Mcrit}};
  \node[punkt] at (8,0) 	(B){isolated calmness, equivalent to \eqref{eq:ICcrit}};
  \node[punkt] at (3,-2) 	(C){\eqref{eq:FOSCclm}};
  \node[punkt] at (8,-2)    (D){inner calmness};
  \node[punkt] at (-1,-4) 	(E){calmness};
  \node[punkt] at (10,-4) 	(F){inner calmness*};
  \node[punkt] at (8,-6)	(G){inner calmness* in the fuzzy sense};
  \node[punkt] at (6,-4)	(H){inner calmness in the fuzzy sense};

  \path     (A) edge[-implies,thick,double] node {}(C)
            (A) edge[-implies,thick,double] node {}(D)
            (B) edge[-implies,thick,double] node {}(C)
            (B) edge[-implies,thick,double,dashed] node[right] {$(a)$}(D)
            (B) edge[-implies,thick,double,dashed,bend left] node[right]{$(b)$}(F)
            (C) edge[-implies,thick,double] node {}(E)
            (C) edge[-implies,thick,double,dashed] node[above right] {$(a)$}(H)
            (C) edge[-implies,thick,double,dashed,bend right] node[below left] {$(b)$}(G)
            (D) edge[-implies,thick,double] node {}(F)
            (D) edge[-implies,thick,double] node {}(H)
            (F) edge[-implies,thick,double] node {}(G)
            (H) edge[-implies,thick,double] node {}(G);
            
\end{tikzpicture}
\caption{
	Relations between the calmness-type conditions for mappings with a closed graph.
	Dashed relations only hold under additional assumptions in general.
	\newline
	The additional assumptions $(a)$ and $(b)$
	stand for inner semicontinuity and inner semicompactness, respectively, see \cref{Lem:ICtoIC,cor:IC*_via_FOSCclm}.
}
\label{fig:calmness_conditions}
\end{figure}
It remains an open question
whether the validity of \eqref{eq:FOSCclm} together with inner semicompactness of the
underlying mapping is enough to already yield inner calmness*.
At the moment, we are not aware of a counterexample.
\if{
Indeed, this does not
hold for arbitrary mappings. Exemplary, consider
$M\colon\R\tto\R$ given by $M(y):=[0,\infty)$ for all $y\in\R\setminus\{1/k\,|\,k\in\N\}$
and $M(1/k):=\{k^{-1/2}\}$ for all $k\in\N$. Observe that $\gph M$ is not closed
in a neighborhood of $\{0\}\times\R$. However, extending the definition of the variational
objects to locally non-closed sets, for all $\xb\in M(0)$, \eqref{eq:FOSCclm}
is valid at $(0,\xb)$ and $M$ is inner semicompact at $\yb:=0$. On the other hand, $M$ is not
inner calm* at $\yb$. We are not aware of a counterexample for mappings with a closed graph.}\fi

\section{Some Calculus Rules}\label{sec:recovering_calculus}

In this section, we show how the two theorems in \cref{Sec:Main}
translate into standard calculus rules.
For brevity, in the first part, where we address
standard calculus rules related to elementary set operations, we provide
only outlines with essential information,
and we forgo presenting actual formulas and other details.
Later, when we deal with generalized derivatives of marginal
functions as well as chain and product rules for the generalized
differentiation of set-valued mappings, we present a detailed exposition.

\subsection{Calculus for Sets}\label{sec:elementary_calculus}

We consider four prototypical types of sets arising from elementary transformations,
whose tangents and normals are to be computed:
sums and intersections of closed sets as well as
images and pre-images of closed sets under continuous transformations.
Here, the main purpose is to show the connection to the theorems in
\cref{Sec:Main} as well as to emphasize the structural similarities within
these four prototypes - see e.g.\ the respective sufficient conditions.
Note, however, that for the sum rule and the image rule,
based on \Cref{The : IC*calc}, the estimates for tangents
and regular normals, as well as the corresponding
sufficient conditions are new
(tangents to the image sets appeared in \cite[Theorem~4.1, Corollary~4.2]{Be19}).
More precisely, we get equalities under fuzzy inner calmness*,
which seems less restrictive than the corresponding
convexity-based assumptions in \cite{RoWe98},
see also the comments after \cite[Corollary~4.2]{Be19}.

In each case, the desired formulas can be derived by computing
derivatives of a problem-tailored associated set-valued mapping.
We always hint how this can be done by means of the two
elementary results from \cref{lem:change_of_coordinates,lem:cartesian_products}.
On top of it,
we also comment on the closedness assumptions of the relevant theorem from \cref{Sec:Main},
refer to the analogous estimates in \cite{RoWe98},
and, finally, provide sufficient conditions for the crucial calmness-type assumption.

{
\setlength{\belowdisplayskip}{3pt}
\setlength{\belowdisplayshortskip}{3pt}
\setlength{\abovedisplayskip}{6pt}
\setlength{\abovedisplayshortskip}{6pt}

\subsubsection*{Sum Rule}
First, we consider the sum rule based on \Cref{The : IC*calc}.
\begin{description}[leftmargin=1em]
  \setlength\itemsep{0em}
    \item[input:] closed sets $D_1,\ldots,D_\ell \subset \R^m$,
        $D:=D_1+\ldots+D_\ell$;
    \item[associated mapping:] $M_1\colon \R^m \tto (\R^m)^\ell$, given by
        \[
	    M_1(y)
	    := 
	    \{(y_1,\ldots,y_\ell) \in D_1 \times \ldots \times D_\ell \mv y_1 + \ldots + y_\ell = y\},
        \]
        satisfies $\dom M_1 = D$ and
        \[
        \gph M_1
	    =
	    \{
		(y,(y_1,\ldots,y_\ell))
		\,|\,
		(y_1,\ldots,y_\ell,y-y_1-\ldots-y_\ell)\in D_1\times\ldots\times D_\ell\times\{0\}
	    \};
	    \]
    \item[closedness:] $\gph M_1$ closed by closedness of $D_1,\ldots,D_\ell$,
        $\dom M_1$ closed by assumption or by
        the inner semicompactness of $M_1$ 
        (e.g.\ if all except at most one of the sets $D_1,\ldots,D_\ell$
        are bounded and, thus, compact);
    \item[derivatives of $M_1$:] use \cref{lem:change_of_coordinates,lem:cartesian_products};
    \item[estimates:] similar to \cite[Exercise~6.44]{RoWe98},
        estimates for tangents can be enriched
        using bounds in terms of the modulus of fuzzy inner calmness*;
    \item[sufficient conditions:] isolated calmness of $M_1$ at
        $(\bar y, (\bar y_1,\ldots,\bar y_\ell)) \in \gph M_1$ (implied by):
        \begin{equation} \label{eq:Isol_Calmness_sum_rule}
        v_1+\ldots+v_\ell=0,\,v_i\in T_{D_i}(\bar y_i)\ i=1,\ldots,\ell
	    \quad
	    \Longrightarrow
	    \quad
	    v_1=\ldots=v_\ell=0;
        \end{equation}
        FOSCclm for $M_1$ at
        $(\bar y, (\bar y_1,\ldots,\bar y_\ell))$ (implied by):
        \[
        	\left.
        		\begin{aligned}
        			&v_1+\ldots+v_\ell=0,\,v_i\in T_{D_i}(\bar y_i)\, i=1,\ldots,\ell,\\
        			&(v_1,\ldots,v_\ell)\neq(0,\ldots,0)
        		\end{aligned}
        	\right\}
	    \quad
	    \Longrightarrow
	    \quad
	    N_{D_1}(\yb_1;v_1) \cap \ldots \cap N_{D_\ell}(\yb_\ell;v_\ell) = \{0\}\]
\end{description}

\subsubsection*{Intersection Rule}
Second, we consider the intersection rule based on \Cref{The : Ccalc}.
\begin{description}
  \setlength\itemsep{0em}
    \item[input:] closed sets $C_1,\ldots,C_\ell \subset \R^n$,
        $C:=\bigcap_{i=1}^\ell C_i$;
    \item[associated mapping:] $M_2\colon (\R^{n})^\ell \tto \R^n$, given by
        \[
	        M_2(x_1, \ldots, x_\ell) 
	        := 
            \bigcap\limits_{i=1}^\ell (C_i - x_i)
        	=
	        \{x \mv (x,\ldots,x)+(x_1,\ldots,x_\ell) \in C_1\times\ldots\times C_\ell\},
        \]
        satisfies $M_2(0,\ldots,0)=C$ and
        \[
	    \gph M_2
	    =
	    \{((x_1, \ldots, x_\ell),x) \,\mid\,
		(x,\ldots,x) + (x_1, \ldots, x_\ell) \in C_1 \times \ldots \times C_\ell
	    \};
        \]
    \item[closedness:] $\gph M_2$ closed by closedness of $C_1,\ldots,C_\ell$;
    \item[derivatives of $M_2$:] use \cref{lem:change_of_coordinates,lem:cartesian_products};
    \item[estimates:] similar to \cite[Theorem~6.42]{RoWe98},
        estimates for (directional) limiting normals can be enriched
        using bounds in terms of the (directional) calmness modulus;
    \item[sufficient conditions:] Aubin property of $M_2$ at
        $((0,\ldots,0),\bar x) \in \gph M_2$:
        \begin{equation}\label{eq:Aubin_Prop_cap_rule}
        x^*_1+\ldots+x^*_\ell=0,\,x^*_i\in N_{C_i}(\bar x)\ i=1,\ldots,\ell
	    \quad
	    \Longrightarrow
	    \quad
	    x^*_1=\ldots=x^*_\ell=0;
        \end{equation}
        FOSCclm for $M_2$ at
        $((0,\ldots,0),\bar x)$ (implied by):
        \[
        x^*_1+\ldots+x^*_\ell=0,\,x^*_i\in N_{C_i}(\bar x;u)\ i=1,\ldots,\ell,\,u\neq 0
	    \quad
	    \Longrightarrow
	    \quad
	    x^*_1=\ldots=x^*_\ell=0
	    \]
\end{description}

\subsubsection*{Image Rule}
Third, we consider the image rule based on \Cref{The : IC*calc}.
\begin{description}
  \setlength\itemsep{0em}
    \item[input:] continuous mapping $g\colon\R^n\to\R^m$, closed set
        $C\subset\R^n$, $D:=g(C)$;
    \item[associated mapping:] $M_3\colon\R^m \tto \R^n$, given by
        \[
	    M_3(y) := g^{-1}(y) \cap C,
        \]
        satisfies $\dom M_3 = D$ and $\gph M_3=\gph g^{-1} \cap (\R^m \times C)$;
    \item[closedness:] $\gph M_3$ closed by closedness of $C$
        and continuity of $g$,
        $\dom M_3$ closed (locally around $\yb$) by assumption or by
        the inner semicompactness of $M_3$
        (e.g.\ if $g^{-1}(V) \cap C$ is bounded
        for some neighborhood $V$ of $\yb$);
    \item[derivatives of $M_3$:] use the above intersection rule,
        justified e.g.\ if $g$ is locally Lipschitzian at $\xb \in M_3(\yb)$,
        since \eqref{eq:Aubin_Prop_cap_rule} reads as
        $D^*g(\xb)(0) \cap \bigl(-N_C(\xb)\bigr)=\{0\}$;
    \item[estimates:] similar to \cite[Theorem~6.43]{RoWe98},
        estimates for tangents can be enriched
        using bounds in terms of the modulus of fuzzy inner calmness*;
    \item[sufficient conditions:] isolated calmness of $M_3$ at
        $(g(\xb),\xb) \in \gph M_3$ (implied by):
        \[0\in Dg(\xb)(u), \ u \in T_C(\xb)
        \quad \Longrightarrow \quad u = 0;\]
        FOSCclm for $M_3$ at $(g(\xb),\xb)$ (implied by):
        \[D^*g(\xb;(u,0))(y^*) \cap \bigl(-N_C(\xb;u)\bigr) \neq \varnothing,\,u\neq 0
        \quad \Longrightarrow \quad y^* = 0\]
\end{description}

\subsubsection*{Pre-Image Rule}
Fourth, we consider the pre-image rule based on \Cref{The : Ccalc}.
\begin{description}
  \setlength\itemsep{0em}
    \item[input:] continuous mapping $g\colon\R^n\to\R^m$, closed set
        $D\subset\R^m$, $C:=g^{-1}(D)$;
    \item[associated mapping:] $M_4\colon\R^m \tto \R^n$, given by
        \[
	    M_4(y) := \{x \mv g(x) + y \in D\},
        \]
        satisfies $M_4(0)=C$ and $\gph M_4 = \gph (-g)^{-1} + (D\times\{0\})$;
    \item[closedness:] $\gph M_4$ closed by closedness of $D$
        and continuity of $g$;
    \item[derivatives of $M_4$:] use the above sum rule,
        justified e.g.\ if $g$ is calm at $\bar x$ where $\xb \in M_4(0)$,
        since \eqref{eq:Isol_Calmness_sum_rule} reads as
        $Dg(\xb)(0) \cap T_D(g(\xb))=\{0\}$;
    \item[estimates:] similar to \cite[Theorem~6.14, Theorem~6.31]{RoWe98},
        estimates for (directional) limiting normals can be enriched
        using bounds in terms of the (directional) calmness modulus;
    \item[sufficient conditions:] Aubin property of $M_4$ at
        $(0,\bar x) \in \gph M_4$:
        \[
        0\in D^*g(\xb)(y^*), \ y^* \in N_D(g(\xb)) \ \Longrightarrow \ y^* = 0;
        \]
        FOSCclm for $M_4$ at $(0,\bar x)$ (implied by):
        \[
        0\in D^*g(\xb;(u,v))(y^*), \ y^* \in N_D(g(\xb);v),\,u\neq 0 \ \Longrightarrow \ y^* = 0
	    \]
\end{description}
}

\subsection{Derivatives of Marginal Functions}\label{sec:marginal_functions}

On the basis of the two patterns from \cref{Sec:Main},
one could derive the majority of calculus rules
for subderivatives and subdifferentials,
such as the sum rule and the chain rule.
These two are, however, based on \cref{The : Ccalc}
and well-understood, so our approach provides little novelty there.
Hence, we only focus here on one prominent rule
related to \cref{The : IC*calc}.

To this end, we fix a lower semicontinuous, extended real-valued 
function $f\colon\R^n\times\R^m\to\overline{\R}$
and consider the associated marginal (or optimal value) function $\vartheta\colon\R^m\to\overline{\R}$
as well as the associated solution mapping $S\colon\R^m\tto\R^n$ given by
\[
	\vartheta(y):=\inf_x f(x,y)
	\qquad\qquad
	S(y):=\argmin_x f(x,y),
\]
respectively.
Let $\bar y\in\dom S$ be chosen such that $\vartheta(\bar y)$ is finite.

In \cite[Propositions~3.2 and~3.3.]{IofPen96},
the authors provide a direct proof of upper estimates for the limiting subdifferential of 
$\vartheta$ at $\bar y$ under the assumption that the solution map $S$ is inner semicontinuous 
at some point $(\bar y,\bar x)\in\gph S$ or inner semicompact at $\bar y$, respectively.
In \cite[Theorem~10.13]{RoWe98}, upper estimates for the regular and the limiting
subdifferential of $\vartheta$ at $\bar y$ are derived via the image rule
while exploiting that $\epi\vartheta$ can be interpreted
as a projection of $\epi f$ under mild assumptions.
In the light of our refined image rule from \cref{sec:elementary_calculus}, one has
to impose some inner semicompactness or inner calmness* (in the fuzzy sense)
of the mapping
\begin{equation*}
 \Psi(y,\alpha) := (\R^n\times\{(y,\alpha)\})\cap\epi f
\end{equation*}
at $(\bar y,\vartheta(\bar y))$ for that purpose.

Here, we strike a slightly different path.
Let us consider the problem-tailored level set mapping $M\colon\R^m\times\R \tto \R^n$
given by
\[
	M(y,\alpha)
	:=
	\{x \in \R^n\,|\,f(x,y)\leq \alpha\}
	= 
	\{x \in \R^n\,|\, (x,y,\alpha) \in \epi f\}.
\]
By definition, we have $\dom M = \epi \vartheta$
and $\gph M = \pi^{-1}(\epi f)$,
where $\pi(y,\alpha,x)=(x,y,\alpha)$ merely permutes the variables,
so there are trivial relations between derivatives of $M$ and $f$,
see \cref{lem:change_of_coordinates}.
Since $\Psi(y,\alpha) = (M(y,\alpha),(y,\alpha))$,
by \cref{pro : SV_calm_perturb}, we see that it is the same
whether we apply the corresponding assumptions on $M$
or $\Psi$, see also \cref{rem:cones_to_domain}.
On the other hand, we have $S(y)=M(y,\vartheta(y))$.
Thus, the comparison of the assumptions imposed on $S$ and $M$
via \cref{pro : SV_calm_perturb} seems to be a more delicate issue
depending on the properties of $\vartheta$, which is
an interesting topic on its own, see \cite{KlKu15}.
A direct comparison (between $S$ and $\Psi$), however,
was performed in \cite[Proof of Theorem 4.2]{BeGfrOut19}.

\cref{The : IC*calc} yields the following estimates for the
generalized derivatives of $\vartheta$.
\begin{theorem}\label{The:ValueFunction}
	Fix $\bar y\in\dom S$ where $\vartheta$ is finite
	and $\dom S$ is locally closed.
	Then the following assertions hold.
	\begin{itemize}
	\item[$\mathbf{d}$] \textup{Subderivative:}
		For each direction $v\in\R^m$ and $\kappa > 0$, we always have
		\[	
			\mathrm d\vartheta(\yb)(v) \leq \inf_{\xb \in S(\yb)}
			\left( 
				\inf_{u \in \kappa \norm{(v,\mathrm d\vartheta(\yb)(v))} \B} 
				\mathrm d f(\xb,\yb)(u,v) 
			\right),
		\]
		and the opposite inequality holds true if
		$M$ is inner calm* at $(\yb,\vartheta(\yb))$
		w.r.t.\ $\dom M$ in direction $(v,\mathrm d\vartheta(\yb)(v))$ 
		in the fuzzy sense
		with modulus smaller than $\kappa$.
	\item[$\mathbf{\widehat{\partial}}$] \textup{Regular subdifferential:}
		We always have
		\[
			y^* \in \widehat{\partial}\vartheta(\yb) \ \Longrightarrow \
			(0,y^*) \in \bigcap_{\xb \in S(\yb)} \widehat{\partial} f(\xb,\yb),
		\]
		and the opposite implication holds true if
		$M$ is inner calm* at $(\yb,\vartheta(\yb))$ w.r.t.\ $\dom M$
		in the fuzzy sense.
	\item[$\mathbf{\partial}$] \textup{Limiting subdifferential:}
		If $M$ is inner semicompact at $(\yb,\vartheta(\yb))$ w.r.t.\ $\dom M$, then we have
		\[
			y^* \in \partial \vartheta(\yb) \ \Longrightarrow \
			(0,y^*) \in \bigcup_{\xb \in S(\yb)} \partial f(\xb,\yb).
		\]
	\item[$\mathbf{\mathbf d\partial}$] \textup{Directional limiting subdifferential:}
		If $M$ is inner calm* at $(\yb,\vartheta(\yb))$ 
		in direction $(v,\mu)\in\epi \mathrm d\vartheta(\bar y)$ w.r.t.\ $\dom M$, we have
		\[
			y^* \in \partial \vartheta(\yb;(v,\mu)) 
			\ \Longrightarrow \
			(0,y^*) \in 
				\bigcup_{\xb \in S(\yb)} 
				\bigcup_{(u,v,\mu)\in\epi \mathrm df(\bar x,\bar y)} 
					\partial f((\xb,\yb);(u,v,\mu)).
		\]
	\end{itemize}
\end{theorem}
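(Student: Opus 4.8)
The plan is to apply \cref{The : IC*calc} to the level set mapping $M\colon\R^m\times\R\tto\R^n$ introduced above and then translate the resulting cone/coderivative relations back into the language of $\vartheta$ and $f$. Three identifications drive everything: $\dom M=\epi\vartheta$; the image $M(\yb,\vartheta(\yb))=S(\yb)$ (indeed $f(x,\yb)\le\vartheta(\yb)=\inf_x f(x,\yb)$ forces $x$ to be a minimizer); and $\gph M=\pi^{-1}(\epi f)$ for the coordinate permutation $\pi(y,\alpha,x)=(x,y,\alpha)$. Lower semicontinuity of $f$ makes $\epi f$, hence $\gph M$, closed, and local closedness of $\dom M=\epi\vartheta$ at $(\yb,\vartheta(\yb))$ is available from the hypothesis (or, in $\mathbf{\partial}$ and $\mathbf{d\partial}$, from the assumed inner semicompactness, and in the fuzzy statements from the very definition of fuzzy inner calmness*), so all closedness requirements of \cref{The : IC*calc} at $(\yb,\vartheta(\yb))$ are met.

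Next I would set up the dictionary between the derivatives of $M$ at $((\yb,\vartheta(\yb)),\xb)$, $\xb\in S(\yb)$, and those of $f$ at $(\xb,\yb)$. Since $\pi$ is a linear bijection, \cref{lem:change_of_coordinates} applies with $g=\pi$ and shows, for each of the four cone types, that a normal or tangent to $\gph M$ is merely the $\pi$-permutation of the corresponding one to $\epi f$ at $(\xb,\yb,\vartheta(\yb))$. Using $f(\xb,\yb)=\vartheta(\yb)$ together with the defining characterizations of $\mathrm{d}f$, $\widehat\partial f$, $\partial f$, and the directional $\partial f$ via $\epi f$, this yields the two key equivalences
\begin{equation*}
	u\in DM\big((\yb,\vartheta(\yb)),\xb\big)(v,\mu)\iff\mathrm{d}f(\xb,\yb)(u,v)\le\mu,
\end{equation*}
and, reading the component $x^*=0$ in the coderivatives,
\begin{equation*}
	(y^*,-1)\in\widehat{D}^*M\big((\yb,\vartheta(\yb)),\xb\big)(0)\iff(0,y^*)\in\widehat\partial f(\xb,\yb),
\end{equation*}
together with the obvious limiting and directional limiting analogues of the latter. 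On the $\vartheta$-side, the definitions give $\epi\mathrm{d}\vartheta(\yb)=T_{\dom M}(\yb,\vartheta(\yb))$ and $(y^*,-1)\in\widehat N_{\dom M}$, $N_{\dom M}$, or $N_{\dom M}(\,\cdot\,;(v,\mu))$ precisely when $y^*$ belongs to $\widehat\partial\vartheta(\yb)$, $\partial\vartheta(\yb)$, or $\partial\vartheta(\yb;(v,\mu))$.

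With this dictionary in hand, the statements $\mathbf{\widehat\partial}$, $\mathbf\partial$, and $\mathbf{d\partial}$ become direct translations of $\mathbf{\widehat N}$, $\mathbf N$, and $\mathbf{dN}$ from \cref{The : IC*calc}: I would simply insert the vector $(y^*,-1)$, equivalently select the component $x^*=0$, into each inclusion. For $\mathbf{d\partial}$ I would additionally invoke that inner calmness* in direction $(v,\mu)$ — and not merely inner semicompactness — is exactly what permits the extra union over $DM(\cdots)(0)\cap\Sp$ in $\mathbf{dN}$ to be discarded, so that only the single union over $(u,v,\mu)\in\epi\mathrm{d}f(\xb,\yb)$ survives, matching the claim.

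The delicate statement, and the main obstacle, is the subderivative $\mathbf d$, where the constant $\kappa$ and the ball $\kappa\norm{(v,\mathrm{d}\vartheta(\yb)(v))}\B$ must be handled through the quantitative form of $\mathbf T$. For the \emph{always}-inequality I would use only the general inclusion $\dom DM\subset T_{\dom M}$: since $(v,\mu)\in\dom DM$ holds exactly when $\inf_u\mathrm{d}f(\xb,\yb)(u,v)\le\mu$, the unrestricted infimum already dominates $\mathrm{d}\vartheta(\yb)(v)$, and restricting $u$ to a ball only enlarges the right-hand side, so the stated inequality is immediate. For the reverse inequality I would set $\mu_0:=\mathrm{d}\vartheta(\yb)(v)$ (trivial if infinite), observe $(v,\mu_0)\in T_{\dom M}(\yb,\vartheta(\yb))$, and feed this into the precise characterization of $T_{\dom M}$ in $\mathbf T$: inner calmness* in direction $(v,\mu_0)$ with modulus below $\kappa$ furnishes a specific $\xb\in S(\yb)$ and a \emph{bounded} $u$ with $\norm{u}\le\kappa\norm{(v,\mu_0)}$ and $u\in DM((\yb,\vartheta(\yb)),\xb)(v,\mu_0)$, i.e.\ $\mathrm{d}f(\xb,\yb)(u,v)\le\mu_0$; this $u$ lies in the prescribed ball and witnesses the reverse inequality. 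Beyond this quantitative step, the only remaining care is the consistent bookkeeping of the permutation $\pi$ and its transpose acting on covectors, which is routine but easy to get wrong.
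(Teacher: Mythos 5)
Your proposal is correct and follows essentially the same route as the paper: apply \cref{The : IC*calc} to the level-set mapping $M$, use the permutation dictionary between $\gph M$ and $\epi f$ (respectively $\dom M$ and $\epi\vartheta$) to translate tangents and normals into subderivatives and subdifferentials, and obtain the quantitative statement $\mathbf{d}$ from the modulus-refined form of $\mathbf{T}$. Your direct treatment of the always-inequality (the unrestricted infimum already dominates $\mathrm d\vartheta(\yb)(v)$, and restricting $u$ to the ball only enlarges the right-hand side) replaces the paper's small $\varepsilon$-argument but is otherwise the same proof.
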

\begin{proof}
Let us start proving the relations for the subderivative.
Taking into account that tangents to the epigraph of a function
are precisely elements of the epigraph of its subderivative,
\cref{The : IC*calc} yields
\begin{equation}\label{eq:subder_main}
	\mu \geq \mathrm d\vartheta(\yb)(v) 
	\ \Longleftarrow \
	\exists \, \xb \in S(\yb),\,\exists\, u \in \kappa \norm{(v,\mu)}\B \colon\;
	\mu \geq \mathrm d f(\xb,\yb)(u,v)
\end{equation}
for arbitrary $\kappa>0$,
and the opposite implication holds true if
$M$ is inner calm* at $(\yb,\vartheta(\yb))$
w.r.t.\ $\dom M$ in direction $(v,\mu)$ in the fuzzy sense
with modulus smaller than $\kappa$.

Next, given any $\varepsilon > 0$, there exist
$\xb_{\varepsilon} \in S(\yb)$ and
$u_{\varepsilon} \in \kappa \norm{(v,\mathrm d\vartheta(\yb)(v))} \B$
such that
\[
	\mathrm d f(\xb_{\varepsilon},\yb)(u_{\varepsilon},v) 
	\ \leq \
	\inf_{\xb \in S(\yb)}
	\left( \inf_{u \in \kappa \norm{(v,\mathrm d\vartheta(\yb)(v))} \B}
		\mathrm d f(\xb,\yb)(u,v) 
	\right) 
	+
	\varepsilon.
\]
Using \eqref{eq:subder_main} with $\mu := \inf_{\xb \in S(\yb)}
\big( \inf_{u \in \kappa \norm{(v,\mathrm d\vartheta(\yb)(v))} \B}
\mathrm d f(\xb,\yb)(u,v) \big) + \varepsilon$
yields the claimed inequality as $\varepsilon\downarrow 0$.
Note that $u_{\varepsilon} \in \kappa \norm{(v,\mu)} \B$
may not be satisfied, but this is not a problem since
\eqref{eq:subder_main} holds also without the bound on $u$.

On the other hand, if $M$ is inner calm* at $(\yb,\vartheta(\yb))$ w.r.t.\ $\dom M$
in direction $(v,\mathrm d\vartheta(\yb)(v))$ in the fuzzy sense,
the forward implication in \eqref{eq:subder_main} with $\mu := \mathrm d\vartheta(\yb)(v)$
yields the estimate $\mathrm d\vartheta(\yb)(v) \geq \mathrm d f(\xb,\yb)(u,v)$
for some $\xb \in S(\yb)$ and
$u \in \kappa \norm{(v,\mathrm d\vartheta(\yb)(v))} \B$,
and we have equality in the formula for the subderivative.

The subdifferential estimates follow directly from the definitions provided in
\cref{sec:basic_notation} as well as \cref{The : IC*calc}.
\end{proof}

While statement $\mathbf{\partial}$ is a standard result
and statement $\mathbf{\mathbf d\partial}$ is an easy extension of
\cite[Theorem 4.2]{BeGfrOut19},
the estimates from statements $\mathbf{d}$ and $\mathbf{\widehat{\partial}}$,
based on fuzzy inner calmness*, are new.

Moreover, based on the results from \cref{Sec:SC},
we can exploit the following sufficient conditions in order to
guarantee the validity of the inner calmness* type assumptions which appear in \cref{The:ValueFunction}.
Using the Levy--Rockafellar criterion, the isolated calmness of $M$ at 
$((\yb,\vartheta(\yb)),\xb) \in \gph M$ reads as
\begin{equation*}
    \mathrm d f(\xb,\yb)(u,0) \leq 0 \ \Longrightarrow \ u = 0
    \quad \iff \quad
    \mathrm d f(\xb,\yb)(u,0) > 0 \
    \forall (u,0) \in \dom \mathrm d f(\xb,\yb),\,u\neq 0,
\end{equation*}
while FOSCclm for $M$ at $((\yb,\vartheta(\yb)),\xb)$ is given by
\begin{equation}\label{eq:FOSCclm_MF}
    \begin{aligned}
    \forall u\in\R^n\setminus\{0\}\colon
    \quad
    &\mathrm d f(\xb,\yb)(u,0) \leq 0 \\
    &\quad \Longrightarrow \quad
    \left\{
    \begin{array}{l}
        (0,y^*) \notin \partial f((\xb,\yb);(u,0,0)) \quad \forall\, y^* \in \R^m, \\
        (0,y^*) \in \partial^{\infty} f((\xb,\yb);(u,0,0))
        \ \Longrightarrow \ y^*=0.
    \end{array}\right.
   	\end{aligned}
\end{equation}
The latter claim deserves a justification.
Suppose \eqref{eq:FOSCclm_MF} holds and observe
\[
	\begin{aligned}
	(y^*,\lambda^*) &\in D^*M\big(((\yb,\vartheta(\yb)),\xb);((0,0),u)\big)(0)\\
	&\iff\quad
	(0,y^*,\lambda^*) \in N_{\epi f}\big(((\xb,\yb),f(\xb,\yb));((u,0),0)\big).
	\end{aligned}
\]
Hence, $\lambda^* \leq 0$ holds. If $\lambda^* < 0$ is valid,
we get $(0,-y^*/\lambda^*) \in \partial f((\xb,\yb);(u,0,0))$ which 
contradicts \eqref{eq:FOSCclm_MF}
and so $\lambda^* = 0$. Then, however, we obtain
$(0,y^*) \in \partial^{\infty} f((\xb,\yb);(u,0,0))$
and $y^*= 0$ follows. This proves FOSCclm for $M$ at $((\yb,\vartheta(\yb)),\xb)$.
The reverse implication is now clear as well.

Finally, note that the weaker condition
\begin{equation}\label{eq:simpler_CQ_marginal_functions}
    \forall u\in\R^n\setminus\{0\}\colon\quad
    \mathrm d f(\xb,\yb)(u,0) \leq 0 \ \Longrightarrow \
        \Bigl[(0,y^*) \in \partial f((\xb,\yb);(u,0,0))
        \ \Longrightarrow \ y^*=0\Bigr]
\end{equation}
can be used here the same way as FOSCclm and isolated calmness.
Indeed, one can derive from \cref{The : IC*calc}
the corresponding estimate for directional subdifferentials, valid under just
inner semicompactness of $M$ and employing directions of the form $(u,0,0)$,
and then observe that \eqref{eq:simpler_CQ_marginal_functions} also
makes the additional union superfluous.
Then the same arguments as those in
the proof of \cref{thm:estimates_via_FOSCclm},
that justified FOSCclm, apply here as well.

\subsection{Chain Rule for Set-Valued Mappings}\label{sec:chain_rule}

Let us consider set-valued mappings
$S_1\colon\R^n\tto\R^m$ and $S_2\colon\R^m\tto\R^\ell$ with closed graphs
as well as their composition $S_2\circ S_1\colon\R^n\tto\R^\ell$ given by
\[
	(S_2\circ S_1)(x):=\bigcup\limits_{y \in S_1(x)} S_2(y).
\]
For later use, we introduce $S\colon\R^n\tto\R^\ell$ as $S:=S_2\circ S_1$.
Furthermore, let $\Xi\colon\R^n\times\R^\ell\tto\R^m$ denote the standard
``intermediate'' mapping given by
\begin{equation}\label{eq:intermediate_map}
	\Xi(x,z):=S_1(x)\cap S_2^{-1}(z)=\{y\in S_1(x)\,|\,z \in S_2(y)\}.
\end{equation}

In \cite[Theorem~10.37]{RoWe98}, the authors use a combination
of the image rule and the pre-image rule in order to compute the
limiting coderivative of the composition $S$.
On the other hand, in \cite[Section~7]{IofPen96},
the authors prove the chain rule for the limiting coderivative via subdifferentials,
namely a combination of the rule for the marginal function and the sum rule.
In \cite[Theorem~3.11]{Mo18}, the author exploits the sum rule for the 
limiting coderivative in order to derive the chain rule for the latter.

As before, we provide a different approach utilizing our results from
\cref{The : IC*calc,The : Ccalc}.
To this end, we introduce the perturbation map
$M\colon \R^{m}\times\R^{\ell} \tto \R^n\times\R^\ell\times\R^m$
given by
\begin{equation}\label{eq:perturbation_map}
	M(p,q) := \{(x,z,y) \mv y + p \in S_1(x), z + q \in S_2(y)\},
\end{equation}
and we point out the relations
\begin{equation}\label{eq:chain_rule_setting}
    \gph S = \dom \Xi \qquad\qquad \gph \Xi = M(0,0).
\end{equation}
Clearly, the results from \cref{The : IC*calc,The : Ccalc}
can be used to establish a connection between the derivatives
of $S$ and $M$ (not $S_1$ and $S_2$).
Additionally, observing that
\[
	\gph M=\{((p,q),(x,z,y))\,|\,((x,y+p),(y,z+q))\in\gph S_1\times\gph S_2\}
\]
holds, we readily infer 
for arbitrary $((\xb,\zb),\yb)\in \gph \Xi$ the relations
\begin{align*}
    DM((0,0),(\xb,\zb,\yb))(0,0)
	&\subset
	\left\{(u,w,v) \,\middle|\,
		v \in DS_1(\xb,\yb)(u),
		w \in DS_2(\yb,\zb)(v)
		\right\},\\
    \widehat{D}^*M((0,0),(\xb,\zb,\yb))(-x^*,z^*,0)
	&=
	\left\{(-p^*,-z^*) \,\middle|\,
		x^* \in \widehat{D}^*S_1(\xb,\yb)(p^*),
		p^* \in \widehat{D}^*S_2(\yb,\zb)(z^*)
		\right\}
\end{align*}
from \cref{lem:change_of_coordinates,lem:cartesian_products}, 
and analogous estimates are valid for the limiting and the
directional limiting coderivate (with relations $=$ and $\subset$, respectively).
Moreover, all inclusions become equalities under
the following equivalence:
\begin{equation}\label{eq: TangProdRel}
	\begin{aligned}
    ((u,v),(v,w)) &\in T_{\Gr S_{1} \times \Gr S_{2}}((\xb,\yb),(\yb,\bar z)) \\
    &\iff \
    v \in DS_1(\xb,\yb)(u), w \in DS_2(\yb,\zb)(v).
    \end{aligned}
\end{equation}
Putting all these things together, we arrive at the following result.
\begin{theorem}\label{The:chain_rule}
	Fix $(\bar x,\bar z)\in\gph S$ where $\gph S$ is locally closed.
	Then the following assertions hold.
	\begin{itemize}
		\item[$\mathbf{D}$] \textup{Graphical derivative:}
			If $\Xi$ is inner calm* at $(\bar x,\bar z)$ w.r.t.\ $\dom\Xi$ in the fuzzy sense, then
			\begin{equation*}
				DS(\xb,\zb)(u) \ \subset \
				\bigcup\limits_{\bar y\in\Xi(\bar x,\bar z)}
				\bigl(DS_2(\yb,\zb) \circ DS_1(\xb,\yb)\bigr)(u).
			\end{equation*}
			The opposite inclusion holds true if,
			for all $\bar y\in\Xi(\bar x,\bar z)$,
			$M$ is calm at $((0,0),(\bar x,\bar z,\bar y))$
			while \eqref{eq: TangProdRel} is valid.
		\item[$\mathbf{\widehat{D}^*}$] \textup{Regular coderivative:}
			If $\Xi$ is inner calm* at $(\bar x,\bar z)$ w.r.t.\ $\dom\Xi$ in the fuzzy sense, then
			\begin{equation*}
				\widehat{D}^* S(\bar x,\bar z)(z^*)	\ \supset \
				\bigcap\limits_{\bar y\in\Xi(\bar x,\bar z)}
				\bigl(\widehat{D}^*S_1(\bar x,\bar y)\circ \widehat{D}^*S_2(\bar y,\bar z)\bigr)(z^*).
			\end{equation*}
			On the other hand, if $M$ is calm at each point
			$((0,0),(\bar x,\bar z,\bar y))$
			such that $\bar y\in\Xi(\bar x,\bar z)$, then we also have the following upper estimate 
			in terms of the limiting coderivative:
			\begin{equation*}
				\widehat{D}^* S(\bar x,\bar z)(z^*) 
				\ \subset \
				\bigcap\limits_{\bar y\in\Xi(\bar x,\bar z)}
				\bigl(D^*S_1(\bar x,\bar y)\circ D^*S_2(\bar y,\bar z)\bigr)(z^*).
			\end{equation*}
		\item[$\mathbf{D^*}$] \textup{Limiting coderivative:}
			If $\Xi$ is inner semicompact at $(\bar x,\bar z)$ w.r.t.\ $\dom\Xi$
			and if $M$ is calm at each point $((0,0),(\bar x,\bar z,\bar y))$
			such that $\bar y\in\Xi(\bar x,\bar z)$, then
			\begin{equation*}
				D^* S(\bar x,\bar z)(z^*)
				\ \subset \
				\bigcup\limits_{\bar y\in\Xi(\bar x,\bar z)}
				\bigl(D^*S_1(\bar x,\bar y)\circ D^*S_2(\bar y,\bar z)\bigr)(z^*).
			\end{equation*}
		\item[$\mathbf{dD^*}$] \textup{Directional limiting coderivative:}
			If $\Xi$ is inner calm* at $(\bar x,\bar z)$ w.r.t.\ $\dom\Xi$
			in some direction $(u,w)\in\R^n\times\R^\ell$ 
			and if $M$ is calm at each point $((0,0),(\bar x,\bar z,\bar y))$ with $\yb\in\Xi(\xb,\zb)$
			and in each direction $(u,w,v)$ with $v \in D\Xi((\bar x,\bar z),\yb)(u,w)$, then
			\begin{align*}
				&D^* S((\bar x,\bar z);(u,w))(z^*)\\
				& \qquad \ \subset \ 
				\bigcup\limits_{\bar y\in\Xi(\bar x,\bar z)}
				\bigcup\limits_{v \in D\Xi((\bar x,\bar z),\yb)(u,w)}
				\bigl(D^*S_1((\bar x,\bar y);(u,v))\circ D^*S_2((\bar y,\bar z);(v,w))\bigr)(z^*).
			\end{align*}
	\end{itemize}
\end{theorem}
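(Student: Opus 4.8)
The plan is to obtain all four assertions by concatenating the two patterns of \cref{The : IC*calc,The : Ccalc} along the two structural identities recorded in \eqref{eq:chain_rule_setting}, namely $\gph S=\dom\Xi$ and $\gph\Xi=M(0,0)$, and then substituting the displayed coderivative relations between $M$ and the pair $S_1,S_2$ that were extracted from \cref{lem:change_of_coordinates,lem:cartesian_products} above the statement. In each case I would first apply the relevant part of \cref{The : IC*calc} to $\Xi$ in order to pass from the generalized derivative of $S$ (a cone to $\gph S=\dom\Xi$) to the corresponding object of $\Xi$ (a cone to $\gph\Xi$), and then apply the matching part of \cref{The : Ccalc} to the perturbation mapping $M$ in order to rewrite the cone to $\gph\Xi=M(0,0)$ through the (directional) coderivative of $M$ evaluated in the zero direction. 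The composition formula drops out by reading off the displayed identities for $DM$, $\widehat D^*M$, $D^*M$, and their directional analogue. The local closedness hypotheses needed to invoke the two theorems are inherited: $\dom\Xi=\gph S$ is locally closed by assumption, while $\gph\Xi=M(0,0)$ and $\gph M$ are closed because $\gph S_1,\gph S_2$ are.

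For the graphical derivative $\mathbf D$, I would read statement $\mathbf T$ of \cref{The : IC*calc} as $T_{\gph S}(\xb,\zb)\subset\bigcup_{\yb}\dom D\Xi((\xb,\zb),\yb)$ under fuzzy inner calmness* of $\Xi$, chain it with the always-valid inclusion $T_{\gph\Xi}\subset DM((0,0),\cdot)(0,0)$ from $\mathbf T$ of \cref{The : Ccalc}, and conclude via the displayed inclusion for $DM(\cdot)(0,0)$; for the opposite inclusion I would instead use calmness of $M$ (to upgrade \cref{The : Ccalc} to equality) together with \eqref{eq: TangProdRel} (to upgrade the $DM$-relation to equality) and then the always-valid inclusion $\bigcup_{\yb}\dom D\Xi\subset T_{\dom\Xi}$, which is exactly why fuzzy inner calmness* of $\Xi$ is \emph{not} needed there. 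The regular-coderivative assertion $\mathbf{\widehat D^*}$ follows the same template through $\mathbf{\widehat N}$ of both theorems: the inner inclusion uses the always-valid $\widehat N_{M(0,0)}\supset-\dom\widehat D^*M$ paired with the ``$\supset$'' part of \cref{The : IC*calc} (under fuzzy inner calmness* of $\Xi$) and the identity for $\widehat D^*M$, whereas the limiting upper estimate uses the always-valid ``$\subset$'' of \cref{The : IC*calc}, the elementary inclusion $\widehat N\subset N$, the $\mathbf N$ estimate of \cref{The : Ccalc} under calmness of $M$, and finally the limiting identity for $D^*M$. The limiting assertion $\mathbf{D^*}$ is the zero-direction specialization: apply $\mathbf N$ of \cref{The : IC*calc} under inner semicompactness of $\Xi$, then $\mathbf N$ of \cref{The : Ccalc} under calmness of $M$, then the limiting identity for $D^*M$.

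The directional assertion $\mathbf{dD^*}$ is where I expect the real work. Here I would apply statement $\mathbf{dN}$ of \cref{The : IC*calc} to $\Xi$; crucially, assuming inner calmness* of $\Xi$ in the direction $(u,w)$ rather than mere inner semicompactness is precisely what suppresses the spurious union over $D\Xi((\xb,\zb),\yb)(0)\cap\Sp$ and at the same time produces an intermediate tangent direction $v\in D\Xi((\xb,\zb),\yb)(u,w)$. This very $v$ is the direction in which calmness of $M$ is postulated, so that $\mathbf{dN}$ of \cref{The : Ccalc} applies to $N_{M(0,0)}((\xb,\zb,\yb);(u,w,v))$ and delivers $(-x^*,z^*,0)\in\dom D^*M(((0,0),(\xb,\zb,\yb));((0,0),(u,w,v)))$. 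The main obstacle is the bookkeeping of directions together with the directional version of the $M$-coderivative identity: one has to check that the affine map $((p,q),(x,z,y))\mapsto((x,y+p),(y,z+q))$ has surjective derivative, so that \cref{lem:change_of_coordinates} transports directional normals of $\gph M$ to those of $\gph S_1\times\gph S_2$ in the correctly transformed direction, and then that the directional product inclusion of \cref{lem:cartesian_products} yields the ``$\subset$'' form of the coderivative relation evaluated at $((0,0),(u,w,v))$. Once that identity is secured, extracting $p^*$ with $x^*\in D^*S_1((\xb,\yb);(u,v))(p^*)$ and $p^*\in D^*S_2((\yb,\zb);(v,w))(z^*)$ completes the double union over $\yb$ and $v$.
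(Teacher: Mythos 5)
Your proposal is correct and takes essentially the same route as the paper's own proof: both arguments chain \cref{The : IC*calc} (applied to $\Xi$ through $\gph S=\dom\Xi$) with \cref{The : Ccalc} (applied to $M$ through $\gph\Xi=M(0,0)$) and then read off the displayed coderivative formulas for $M$, with the identical division of labor among fuzzy inner calmness* of $\Xi$, calmness of $M$ plus \eqref{eq: TangProdRel} for the reverse inclusion in $\mathbf{D}$, and directional calmness of $M$ in the directions $(u,w,v)$ with $v\in D\Xi((\xb,\zb),\yb)(u,w)$ for $\mathbf{dD^*}$. The one step you flag as ``real work''---transporting directional normals via the surjective-derivative change of coordinates and the directional product inclusion of \cref{lem:change_of_coordinates,lem:cartesian_products}---is exactly what the paper settles in the displayed relations just before the theorem, so nothing is missing.
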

\begin{proof}
The proof in fact follows easily from the relations \eqref{eq:chain_rule_setting},
\cref{The : IC*calc,The : Ccalc}, and the
above formulas for derivatives of $M$.

Let us start to prove the assertions from $\mathbf{D}$.
Therefore, fix $w\in DS(\xb,\zb)(u)$ for some arbitrary $u\in\R^n$.
Due to $\gph S=\dom\Xi$ and the fuzzy inner calmness* of $\Xi$ at $(\xb,\zb)$ w.r.t.\
$\dom\Xi$, we find $\yb\in\Xi(\xb,\zb)$ and $v\in D\Xi((\xb,\zb),\yb)(u,w)$
by \cref{The : IC*calc}. Due to $\gph\Xi=M(0,0)$, we have
$((u,w),v)\in T_{M(0,0)}((\xb,\zb),\yb)$. Now, we can apply
\cref{The : Ccalc} in order to find $(u,w,v) \in DM((0,0),(\xb,\zb,\yb))(0,0)$,
and this yields $w\in (DS_2(\yb,\zb)\circ DS_1(\xb,\yb))(u)$.
The opposite inclusion follows by the same steps
in reverse order while noting that equality holds in the formula for the graphical
derivative for $M$ due to \eqref{eq: TangProdRel}
and that the calmness of $M$ at all reference points is needed in order to
obtain equality in the first assertion of \cref{The : Ccalc}.

Next, we investigate the assertions from $\mathbf{\widehat{D}}^*$.
Fix an arbitrary $\yb\in\Xi(\xb,\zb)$ as well as some
$x^* \in (\widehat{D}^*S_1(\bar x,\bar y)\circ \widehat{D}^*S_2(\bar y,\bar z))(z^*)$
for arbitrarily chosen $z^*\in\R^\ell$, i.e.,
$(-x^*,z^*,0)\in\dom \widehat{D}^*M((0,0),(\xb,\zb,\yb))$.
Due to $\gph\Xi=M(0,0)$, \cref{The : Ccalc} applies here and yields
$(x^*,-z^*)\in\widehat{D}^*\Xi((\xb,\zb),\yb)(0)$.
Observing that this holds for all $\yb\in\Xi(\xb,\zb)$ while
$\Xi$ is assumed to be inner calm* at $(\xb,\zb)$ w.r.t.\ $\dom\Xi$
in the fuzzy sense, \cref{The : IC*calc} leads
to $(x^*,-z^*)\in\widehat{N}_{\dom\Xi}(\xb,\zb)$.
Since $\dom\Xi=\gph S$ is valid, we have shown the validity of the lower estimate
for the regular coderivative.

On the other hand, given $x^* \in \widehat{D}^* S(\bar x,\bar z)(z^*)$ for
arbitrary $z^*\in\R^\ell$,
\cref{The : IC*calc} implies 
$(x^*,-z^*) \in \widehat{D}^*\Xi((\bar x,\bar z),\yb)(0)$
for each $\bar y\in\Xi(\bar x,\bar z)$.
Replacing the regular coderivative by the limiting one
and utilizing the assumed calmness of $M$,
\cref{The : Ccalc} leads to $(-x^*,z^*,0)\in\dom D^*M((0,0),(\xb,\zb,\yb))$
and the upper estimate for the regular coderivative follows.

The remaining proofs for the assertions in $\mathbf{D}^*$ and $\mathbf{dD}^*$ are
analogous to the validation of the upper estimate for the regular coderivative.
\end{proof}

Let us point out that the upper estimate for the regular coderivative via an intersection
is generally tighter than the one for the limiting coderivative
which is given in terms of a union.
This is achieved by replacing the regular coderivative by the limiting one
only after the first calculus step instead of doing it at the beginning.

Exploiting the precise moduli of fuzzy inner calmness* and (directional) calmness in the
proof of \cref{The:chain_rule}, one obtains estimates
which comprise bounds.
\begin{corollary}
	Fix $(\bar x,\bar z)\in\gph S$ where $\gph S$ is locally closed.
	\begin{itemize}
		\item[$\mathbf D$] \textup{Graphical derivative:}
			If $\Xi$ is inner calm* at $(\bar x,\bar z)$
			w.r.t.\ $\dom\Xi$ in direction $(u,w)$ in the fuzzy sense with
			modulus smaller than $\kappa$, then
			\begin{align*}
                &w\in DS(\xb,\zb)(u)\\
				&\quad
				\ \Longrightarrow \
				\exists\yb\in\Xi(\xb,\zb)\,
				\exists v\in DS_1(\xb,\yb)(u)\cap\kappa\norm{(u,w)}\B\colon\,
				w\in DS_2(\yb,\zb)(v).
			\end{align*}
        \item[$\mathbf{D^*}$] \textup{Limiting coderivative:} 
            If $\Xi$ is inner semicompact at $(\bar x,\bar z)$ w.r.t.\ $\dom\Xi$ and if
			$M$ is calm at each point $((0,0),(\xb,\zb,\yb))$ such that $\yb\in\Xi(\xb,\zb)$
			with modulus smaller than $\kappa_{\yb}$, then
			\begin{align*}
				&x^*\in D^*S(\xb,\zb)(z^*)\\
				&\quad
				\ \Longrightarrow \
				\exists \yb\in\Xi(\xb,\zb)\,
				\exists y^*\in D^*S_2(\yb,\zb)(z^*)\cap\kappa_{\yb}\norm{(x^*,z^*)}\B\colon\,
				x^*\in D^*S_1(\xb,\yb)(y^*).
			\end{align*}
    \end{itemize}
Naturally, the upper estimate for the regular coderivative and the estimate 
for the directional limiting coderivative can also be
enriched by the (directional) calmness modulus.
\end{corollary}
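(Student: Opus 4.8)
The plan is to rerun the proof of \cref{The:chain_rule} almost verbatim, but to invoke the \emph{quantitative} versions of the two patterns in \cref{The : IC*calc,The : Ccalc} --- the ones carrying the precise moduli --- and to keep track of which modulus bounds the intermediate object created in each step. For the graphical derivative estimate $\mathbf D$, I would start from $w\in DS(\xb,\zb)(u)$, i.e.\ $(u,w)\in T_{\dom\Xi}(\xb,\zb)$ by \eqref{eq:chain_rule_setting}. Applying the enriched tangent statement $\mathbf T$ of \cref{The : IC*calc} to $\Xi$, which is inner calm* at $(\xb,\zb)$ w.r.t.\ $\dom\Xi$ in direction $(u,w)$ in the fuzzy sense with modulus smaller than $\kappa$, yields $\yb\in\Xi(\xb,\zb)$ together with $v\in D\Xi((\xb,\zb),\yb)(u,w)$ obeying the sharp bound $\norm{v}\leq\kappa\norm{(u,w)}$. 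Since $\gph\Xi=M(0,0)$, this membership reads $(u,w,v)\in T_{M(0,0)}(\xb,\zb,\yb)$, and the \emph{always valid} inclusion $\mathbf T$ of \cref{The : Ccalc} promotes it to $(u,w,v)\in DM((0,0),(\xb,\zb,\yb))(0,0)$; the product formula for $M$ then splits this into $v\in DS_1(\xb,\yb)(u)$ and $w\in DS_2(\yb,\zb)(v)$. Carrying the unchanged bound through gives $v\in DS_1(\xb,\yb)(u)\cap\kappa\norm{(u,w)}\B$. Note that calmness of $M$ is \emph{not} needed here, since only the trivial inclusion of \cref{The : Ccalc} is used, and the single bound enters through the inner calmness* step --- the primal pattern bounding the primal intermediate direction.

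For the limiting coderivative estimate $\mathbf{D^*}$, I would dualize this chain. From $x^*\in D^*S(\xb,\zb)(z^*)$, i.e.\ $(x^*,-z^*)\in N_{\dom\Xi}(\xb,\zb)$, the inner semicompactness of $\Xi$ and statement $\mathbf N$ of \cref{The : IC*calc} produce some $\yb\in\Xi(\xb,\zb)$ with $(x^*,-z^*)\in D^*\Xi((\xb,\zb),\yb)(0)$, i.e.\ $(x^*,-z^*,0)\in N_{M(0,0)}(\xb,\zb,\yb)$ after the coordinate identification $\gph\Xi=M(0,0)$. Now the bound is delivered by the \emph{enriched} limiting-normal statement $\mathbf N$ of \cref{The : Ccalc}: since $M$ is calm at $((0,0),(\xb,\zb,\yb))$ with modulus smaller than $\kappa_{\yb}$, there is a multiplier $\eta^*\in D^*M((0,0),(\xb,\zb,\yb))(-x^*,z^*,0)$ with $\norm{\eta^*}\leq\kappa_{\yb}\norm{(x^*,z^*)}$. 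The (equality) coderivative formula for $M$ forces $\eta^*=(-y^*,-z^*)$ for some $y^*$ with $x^*\in D^*S_1(\xb,\yb)(y^*)$ and $y^*\in D^*S_2(\yb,\zb)(z^*)$, and $\norm{y^*}\leq\norm{\eta^*}$ then gives $y^*\in D^*S_2(\yb,\zb)(z^*)\cap\kappa_{\yb}\norm{(x^*,z^*)}\B$. Here the bound enters through the calmness step and lands on the dual intermediate multiplier, mirroring the dual pattern.

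The remaining enrichments --- of the upper estimate for the regular coderivative and of the directional limiting coderivative --- are entirely analogous: in the proof of \cref{The:chain_rule} one replaces the qualitative uses of statements $\mathbf N$ and $\mathbf{dN}$ of \cref{The : Ccalc} by their quantitative counterparts, which again bound the multiplier produced in that step by the (directional) calmness modulus times $\norm{(x^*,z^*)}$. The only point that requires care --- and the place where a sloppy argument could go wrong --- is the bookkeeping across the coordinate reordering between $\gph\Xi$ and $M(0,0)$ together with the passage from $\norm{\eta^*}$ to $\norm{y^*}$; the latter relies on the fact that, for the Euclidean norm, the norm of a subvector never exceeds the norm of the full vector. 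Neither step poses a genuine obstacle, so the substance of the corollary is really the observation that the two patterns of \cref{Sec:Main} are quantitative and that their moduli compose exactly as the estimates suggest.
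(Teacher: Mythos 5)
Your proposal is correct and is exactly the argument the paper intends: the paper proves this corollary only by the remark that one reruns the proof of \cref{The:chain_rule} while exploiting the precise moduli in the quantitative statements of \cref{The : IC*calc,The : Ccalc}, which is precisely what you carry out. Your bookkeeping is also accurate on the two points worth checking --- that calmness of $M$ is not needed in $\mathbf D$ (only the always-valid inclusion of \cref{The : Ccalc} is used there, the bound coming from fuzzy inner calmness* and landing on $v$), and that in $\mathbf{D^*}$ the calmness modulus bounds $\eta^*=(-y^*,-z^*)$, whence $\norm{y^*}\leq\norm{\eta^*}\leq\kappa_{\yb}\norm{(x^*,z^*)}$.
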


In the subsequent remark, we comment on the graphical derivative of $\Xi$,
which appears in $\mathbf{dD}^*$,
as well as on the directional limiting coderivative of $\Xi$, which will be important later 
in the discussion about sufficient conditions for the calmness conditions appearing
in \cref{The:chain_rule}.
\begin{remark}\label{rem:graphical_derivative_xi}
	Fix $(\xb,\zb)\in\gph S$ and $\yb\in\Xi(\xb,\zb)$ as well as
	a pair $(u,w)\in\R^n\times\R^\ell$.
	Due to $\gph\Xi=M(0,0)$, we can
	apply \cref{The : Ccalc} and the formula
	for the graphical derivative of $M$ in order to find the upper estimate
	\[
		D\Xi((\xb,\zb),\yb)(u,w)
		\ \subset \
		DS_1(\xb,\yb)(u)\cap DS_2(\yb,\zb)^{-1}(w).
	\]
	On the other hand, $v \in DS_1(\xb,\yb)(u)\cap DS_2(\yb,\zb)^{-1}(w)$ belongs
	to $D\Xi((\xb,\zb),\yb)(u,w)$
	whenever $M$ is calm at $((0,0),(\xb,\zb,\yb))$
	while \eqref{eq: TangProdRel} holds.
	Moreover, directional calmness of $M$ at $((0,0),(\xb,\zb,\yb))$ in direction $(u,w,v)$ yields			
	\begin{align*}
        &(x^*,-z^*) \in D^*\Xi\big(((\xb,\zb),\yb);((u,w),v)\big)(0) \\
        & \qquad \qquad \Longrightarrow \
        x^* \in \bigl(D^*S_1((\bar x,\bar y);(u,v))\circ D^*S_2((\bar y,\bar z);(v,w))\bigr)(z^*).
	\end{align*}
\end{remark}

Let us now comment on sufficient conditions for the presence of the
additional calmness properties in \cref{The:chain_rule}.
With the aid of the Mordukhovich criterion
and the formulas for the derivatives of $M$, we can easily check
that $M$ possesses the Aubin property at some
point $((0,0),(\xb,\zb,\yb)) \in \gph M$
if and only if the condition
\begin{equation}\label{eq:CQ_chain_rule_dual}
	\ker D^*S_1(\xb,\yb)\cap D^*S_2(\yb,\zb)(0)=\{0\}
\end{equation}
holds. This is the essential qualification condition which appears frequently in the literature
in order to guarantee the validity of the chain rule for the limiting coderivative, see e.g.\
\cite[Theorem~3.9]{Mo18} or \cite[Theorem~10.37]{RoWe98}.
Moreover, it is implied, in particular, if
either $S_1$ is metrically regular at $(\xb,\yb)$
or $S_2$ has the Aubin property at $(\yb,\zb)$.
Finally, FOSCclm of $M$ at $((0,0),(\xb,\zb,\yb))$
holds if for all nonzero $(u,v,w)$
with $v \in DS_1(\xb,\yb)(u)$ and $w \in DS_2(\yb,\zb)(v)$ one has
\begin{equation*}
	\ker D^*S_1((\xb,\yb);(u,v))\cap D^*S_2((\yb,\zb);(v,w))(0)=\{0\}.
\end{equation*}

On the other hand, on the basis of \cref{rem:graphical_derivative_xi} and the Levy--Rockafellar criterion,
we infer that $\Xi$ is isolatedly calm at some point
$((\xb,\zb),\yb) \in \gph \Xi$ provided
\begin{equation}\label{eq:CQ_chain_rule_primal}
	DS_1(\xb,\yb)(0) \cap \ker DS_2(\yb,\zb)=\{0\}
\end{equation}
holds. Particularly, this is satisfied
if either $S_1$ is isolatedly calm at $(\xb,\yb)$
or $S_2$ is strongly metrically subregular at $(\yb,\zb)$.
Moreover, FOSCclm for $\Xi$ holds at $((\xb,\zb),\yb)$
if one has
\begin{equation*}
 D^*\Xi\big(((\xb,\zb),\yb);((0,0),v)\big)(0) = \{(0,0)\}
 \quad
 \forall v \in \bigl(DS_1(\xb,\yb)(0) \cap \ker DS_2(\yb,\zb)\bigr)\setminus\{0\}.
\end{equation*}
Naturally, this can be secured in terms of $S_1$ and $S_2$ by
\[
	x^* \in \bigl(D^*S_1((\bar x,\bar y);(0,v))\circ D^*S_2((\bar y,\bar z);(v,0))\bigr)(z^*)
	\ \Longrightarrow \ 
	x^*, z^* = 0,\]
provided $M$ is calm at $((0,0),(\xb,\zb,\yb))$ in direction $(0,0,v)$,
see \cref{rem:graphical_derivative_xi} again.

Finally, note that the standard approach to the chain rule from the literature typically asks
for calmness of the mapping 
$\widetilde M\colon \R^n\times\R^m \times \R^m\times\R^\ell\tto \R^n\times\R^\ell\times\R^m$ given by
\[
	\widetilde M(a,b,c,d) 
	:=
	\{(x,z,y) \,|\, (x+a,y+b) \in \gph S_1, \, (y+c,z+d) \in \gph S_2\}
\]
at all points $((0,0,0,0),(\xb,\zb,\yb))$ such that $\yb\in\Xi(\xb,\zb)$.
Again, \cref{pro : SV_calm_perturb,rem:cones_to_domain}
clarify that this assumption is equivalent to ours due to
$M(b,d)=\widetilde M(0,b,0,d)$ and $\widetilde M(a,b,c,d)=M(b-c,d)-(a,c,0)$.

\subsection{Product Rule for Set-Valued Mappings}\label{sec:product_rule}

Products of set-valued mappings are interesting on their own
and also as an auxiliary tool to handle any
binary operation on set-valued mappings, see \cite[Section 3.2.3]{Mo18}.
Arguably, the most common of such operations is addition.
Here, we focus on the products.
However, we also address the so-called \emph{decoupled sum rule} along the way.

First, we investigate the simple case of {\em componentwise products}, where
the set-valued mapping $S\colon\R^{n_1}\times\R^{n_2}\tto\R^{\ell_1}\times\R^{\ell_2}$, given by
\begin{equation}\label{eq:comp_prod}
	S(y_1,y_2) := \Gamma_1(y_1) \times \Gamma_2(y_2)
\end{equation}
for mappings $\Gamma_1\colon\R^{n_1} \tto\R^{\ell_1}$ and
$\Gamma_2\colon\R^{n_2} \tto\R^{\ell_2}$ with closed graphs,
is under consideration.
Since
$\gph S=\pi^{-1}(\gph\Gamma_1\times\gph\Gamma_2)$ holds,
where $\pi(y_1,y_2,z_1,z_2):=(y_1,z_1,y_2,z_2)$
merely permutes the variables, derivatives of $S$
can be expressed as products of the derivatives associated with $\Gamma_1$ and $\Gamma_2$,
see \cref{lem:change_of_coordinates,lem:cartesian_products}.
While for graphical derivatives and directional limiting coderivatives,
we generally only get inclusions,
equalities are obtained for the regular and the limiting coderivative.
Furthermore, it is possible to get equalities for graphical derivative and directional
limiting coderivative in particular situations where the use of \cref{lem:cartesian_products} 
can be avoided, e.g.\ if one of the mappings $\Gamma_1$ or
$\Gamma_2$ is single-valued and continuously differentiable.

Let us investigate the setting $\ell:=\ell_1=\ell_2$.
Composing the resulting mapping $S$ from \eqref{eq:comp_prod} 
with the single-valued function $\funcsum\colon\R^\ell\times\R^\ell \to \R^\ell$, 
given by $\funcsum(z_1,z_2):=z_1+z_2$,
results in
\[
	\Sigma(y_1,y_2):=(\funcsum\circ S)(y_1,y_2)=\Gamma_1(y_1) + \Gamma_2(y_2).
\]
Thus, the generalized derivatives of $\Sigma\colon\R^{n_1}\times\R^{n_2}\tto\R^\ell$
can be computed with the aid of the chain rule from \cref{sec:chain_rule}.
Continuous differentiability of $\funcsum$ implies
\eqref{eq: TangProdRel} as well as calmness of the associated perturbation
mapping \eqref{eq:perturbation_map} by \eqref{eq:CQ_chain_rule_dual}.
Hence, \cref{The:chain_rule} yields the {\em decoupled sum rule}
under the corresponding inner calmness* assumptions
imposed on the intermediate mapping
\[
	\Xi((y_1,y_2),z)
	=
	\{(z_1,z_2) \in \Gamma_1(y_1) \times \Gamma_2(y_2)\,|\,z_1 + z_2 = z\}.
\]
These are, however, intrinsically satisfied if one
of the mappings is single-valued and locally Lipschitzian.
In particular, we exemplary get
\begin{equation}\label{eq:Dec_Sum_SV}
	\begin{aligned}
	D\Sigma((\yb_1,\yb_2),\zb)(v_1,v_2) 
	&\subset
	D\Gamma_1(\yb_1,\zb - \gamma_2(\yb_2))(v_1) + D\gamma_2(\yb_2)(v_2),\\
    D^*\Sigma((\yb_1,\yb_2),\zb)(z^*) 
    &\subset
    D^*\Gamma_1(\yb_1,\zb - \gamma_2(\yb_2))(z^*) \times
    D^*\gamma_2(\yb_2)(z^*)
    \end{aligned}
\end{equation}
if $\Gamma_2=\gamma_2$ holds for some single-valued, locally Lipschitzian function
$\gamma_2\colon\R^{n_2}\to\R^\ell$.
Moreover, if $\gamma_2$ is even continuously differentiable, all the estimates in fact hold with
equality as well. This can be shown by \cref{lem:change_of_coordinates}
since we have
$\gph\Sigma=\{((y_1,y_2),z)\,|\, (y_1,z-\gamma_2(y_2)) \in \gph\Gamma_1\}$.

Next, we take a closer look at the setting $n:=n_1=n_2$.
We want to compose the resulting mapping $S$ from \eqref{eq:comp_prod}
with the single-valued function $\funcdoub\colon\R^n\to\R^n\times\R^n$, given by $\funcdoub(x):=(x,x)$,
in order to compute the generalized derivatives of the \emph{product mapping} 
$\Gamma\colon\R^n\tto\R^{\ell_1}\times\R^{\ell_2}$ represented by
\[
	\Gamma(x):= (S \circ \funcdoub)(x) = \Gamma_1(x)\times\Gamma_2(x).
\]
Again, we are going to exploit the chain rule from \cref{sec:chain_rule}
for these computations.
Since $\funcdoub$ is single-valued and continuously differentiable,
\eqref{eq: TangProdRel} clearly holds, and, moreover,
the associated intermediate mapping $\Xi$ from
\eqref{eq:intermediate_map} is single- or empty-valued
and, thus, trivially inner calm w.r.t.\ its domain 
at each point of its domain.
On the other hand, the associated perturbation mapping 
$M\colon\R^n\times\R^n\times\R^{\ell_1}\times\R^{\ell_2}
\tto\R^n\times\R^{\ell_1}\times\R^{\ell_2}\times\R^n\times\R^n$
from \eqref{eq:perturbation_map} takes the form
\[
	M(p_1,p_2,q_1,q_2)=\{(x,z_1,z_2,y_1,y_2)\,|\,x-y_i=p_i,\,q_i\in\Gamma_i(y_i)-z_i,\,i=1,2\}.
\]
\cref{The:chain_rule} now yields the following result.
\begin{theorem}\label{Thm:product_rule}
	Fix $(\bar x,(\bar z_1,\bar z_2))\in\gph\Gamma$.
	Then the following assertions hold.
	\begin{itemize}
		\item[$\mathbf{D}$] \textup{Graphical derivative:}
			We always have
			\begin{align*}
				D\Gamma(\bar x,(\zb_1,\zb_2))(u)
				\subset
				D\Gamma_1(\bar x,\bar z_1)(u)\times D\Gamma_2(\bar x,\bar z_2)(u),
			\end{align*}
			and the opposite inclusion holds if $M$ is calm at 
			$((0,0,0,0),(\bar x,\bar z_1,\bar z_2,\bar x,\bar x))$
			and
			\begin{equation*}\label{eq: TangProdRel2}
    			((u,w_1),(u,w_2)) 
    			\in 
    			T_{\gph\Gamma_1\times\gph\Gamma_2}((\xb,\zb_1),(\xb,\zb_2)) 
    			\ \iff \
    			w_i \in D\Gamma_i(\bar x,\bar z_i)(u),\; i=1,2.
			\end{equation*}
		\item[$\mathbf{\widehat{D}^*}$] \textup{Regular coderivative:}
			We always have
			\[
				\widehat{D}^*\Gamma(\bar x,(\zb_1,\zb_2))(z^*)
				\supset
				\widehat{D}^*\Gamma_1(\bar x,\bar z_1)(z_1^*)
				+
				\widehat{D}^*\Gamma_2(\bar x,\bar z_2)(z_2^*).
			\]
		\item[$\mathbf{D^*}$] \textup{Limiting coderivative:}
			If $M$ is calm at $((0,0,0,0),(\bar x,\bar z_1,\bar z_2,\bar x,\bar x))$,
			then
			\[
				D^*\Gamma(\bar x,(\zb_1,\zb_2))(z^*)
				\subset
				D^*\Gamma_1(\bar x,\bar z_1)(z_1^*)
				+
				D^*\Gamma_2(\bar x,\bar z_2)(z_2^*).
			\]
		\item[$\mathbf{dD^*}$] \textup{Directional limiting coderivative:}
			Fix a direction $(u,w_1,w_2)\in\R^n\times\R^{\ell_1}\times\R^{\ell_2}$.
			If $M$ is calm at $((0,0,0,0),(\bar x,\bar z_1,\bar z_2,\bar x,\bar x))$
			in direction $(u,w_1,w_2,u,u)$, then
			\begin{align*}
				&D^*\Gamma\bigl((\bar x,(\zb_1,\zb_2));(u,(w_1,w_2))\bigr)(z^*)\\
				&\qquad\subset
				D^*\Gamma_1((\bar x,\bar z_1);(u,w_1))(z_1^*)
				+
				D^*\Gamma_2((\bar x,\bar z_2);(u,w_2))(z_2^*).
			\end{align*}
	\end{itemize}
\end{theorem}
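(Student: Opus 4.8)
The plan is to apply the chain rule \cref{The:chain_rule} to the decomposition $\Gamma=S\circ\funcdoub$ fixed above and then to unwind the two factors $\funcdoub$ and $S$ separately. First I would record the simplifications produced by $\funcdoub$ being single-valued and continuously differentiable, which were already observed before the theorem: \eqref{eq: TangProdRel} holds automatically, and the intermediate mapping $\Xi$ is single- or empty-valued, hence trivially inner semicompact and (fuzzy) inner calm* w.r.t.\ $\dom\Xi$ in every direction. Since $(\xb,(\zb_1,\zb_2))\in\gph\Gamma$, one moreover checks $\Xi(\xb,(\zb_1,\zb_2))=\{(\xb,\xb)\}$, so every union or intersection over $\yb\in\Xi(\xb,(\zb_1,\zb_2))$ occurring in \cref{The:chain_rule} collapses to the single contribution at $\yb=(\xb,\xb)$. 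After these reductions each assertion of \cref{The:chain_rule} only requires the generalized derivatives of $\funcdoub$ at $\xb$ and of $S$ at $((\xb,\xb),(\zb_1,\zb_2))$.

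Next I would compute those two blocks. As $\funcdoub$ is smooth with $\nabla\funcdoub(\xb)u=(u,u)$, the calculus for smooth maps from \cref{sec:basic_notation} gives $D\funcdoub(\xb)(u)=(u,u)$ and $\widehat D^*\funcdoub(\xb)(p_1,p_2)=D^*\funcdoub(\xb)(p_1,p_2)=p_1+p_2$, while $D^*\funcdoub(\xb;(u,v))$ is nonempty only for $v=(u,u)$ and equals $p_1+p_2$ there. This is exactly the origin of the summation in the final formulas; it also forces the intermediate direction $v=(u,u)$, so that directional calmness of $M$ is needed precisely in direction $(u,w_1,w_2,u,u)$. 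For $S$, I would invoke \cref{lem:change_of_coordinates,lem:cartesian_products} through $\gph S=\pi^{-1}(\gph\Gamma_1\times\gph\Gamma_2)$: the regular and limiting coderivatives split as exact Cartesian products, $\widehat D^*S=\widehat D^*\Gamma_1\times\widehat D^*\Gamma_2$ and $D^*S=D^*\Gamma_1\times D^*\Gamma_2$, whereas for the graphical derivative and the directional limiting coderivative \cref{lem:cartesian_products} supplies only the product as an upper bound.

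Then I would assemble the four parts. Feeding the exact splitting of $\widehat D^*S$, respectively $D^*S$, into the summation coming from $\funcdoub$ turns the chain-rule lower estimate for $\widehat D^*$, respectively the upper estimate for $D^*$, into the asserted $\widehat D^*\Gamma_1(\xb,\zb_1)(z_1^*)+\widehat D^*\Gamma_2(\xb,\zb_2)(z_2^*)$, respectively $D^*\Gamma_1(\xb,\zb_1)(z_1^*)+D^*\Gamma_2(\xb,\zb_2)(z_2^*)$; the statement $\mathbf{\widehat D^*}$ needs no calmness since only fuzzy inner calm* of $\Xi$ enters, while $\mathbf{D^*}$ and $\mathbf{dD^*}$ inherit the calmness hypotheses on $M$ verbatim from \cref{The:chain_rule}. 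For the graphical derivative, the product upper bound for $DS$ yields the unconditional inclusion $\subset$; to reverse it I would combine the opposite inclusion in \cref{The:chain_rule}\,$\mathbf{D}$ (which requires $M$ calm at the reference point) with the observation that \eqref{eq: TangProdRel2} is precisely the condition promoting the \cref{lem:cartesian_products} inclusion for $DS$ to an equality.

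I expect the main difficulty to be organizational rather than substantial: one must keep the two independent uses of Cartesian-product calculus cleanly apart --- the one hidden inside the chain rule via \eqref{eq: TangProdRel}, which is free here because $\funcdoub$ is smooth, and the one used to split $S$ into $\Gamma_1,\Gamma_2$, which delivers equalities only for regular and limiting coderivatives. This is exactly what forces $\mathbf{D}$ to carry the extra hypothesis \eqref{eq: TangProdRel2} and obliges $\mathbf{dD^*}$ to stay a one-sided inclusion, and it is also why one must double-check that the forced intermediate direction $v=(u,u)$ matches the calmness direction $(u,w_1,w_2,u,u)$ demanded in the theorem.
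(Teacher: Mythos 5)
Your proposal is correct and is essentially the paper's own argument: the paper, too, derives \cref{Thm:product_rule} by applying \cref{The:chain_rule} to $\Gamma=S\circ\funcdoub$, observing that smoothness of $\funcdoub$ makes \eqref{eq: TangProdRel} automatic and the intermediate mapping $\Xi$ single- or empty-valued (hence trivially inner calm w.r.t.\ its domain), and splitting the derivatives of $S$ through \cref{lem:change_of_coordinates,lem:cartesian_products}. Your additional bookkeeping --- the collapse of $\Xi(\xb,(\zb_1,\zb_2))$ to $\{(\xb,\xb)\}$, the forced intermediate direction $v=(u,u)$ matching the calmness direction $(u,w_1,w_2,u,u)$, and the role of the tangent-cone product condition in statement $\mathbf{D}$ --- merely spells out details the paper leaves implicit.
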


The qualification condition
\eqref{eq:CQ_chain_rule_dual}, which implies the calmness of $M$,
takes the precise form
\begin{equation}\label{eq:CQ_product}
	D^*\Gamma_1(\bar x,\bar z_1)(0)\cap\bigl(-D^*\Gamma_2(\bar x,\bar z_2)(0)\bigr)=\{0\}.
\end{equation}
This is clearly satisfied if
one of the mappings $\Gamma_i$, $i=1,2$, possesses the Aubin property at $(\bar x,\bar z_i)$.
In the following lemma, we summarize some
settings where \eqref{eq:CQ_product} is naturally valid.
\begin{lemma}\label{lem:CQ_product_naturally_given}
	Fix $(\bar x,(\bar z_1,\bar z_2))\in\gph\Gamma$.
	Then \eqref{eq:CQ_product} holds under
	each of the following conditions:
	\begin{itemize}
     \item[(i)] there are a locally Lipschitz continuous function 
            $\gamma\colon\R^n\to\R^{\ell_2}$
			as well as a closed set $\Omega\subset\R^{\ell_2}$ such that $\Gamma_2$ is given by
			$\Gamma_2(x):=\gamma(x)+\Omega$ (an analogous statements holds if $\Gamma_1$ admits
			such a representation);
	 \item[(ii)] the variables $x$ can be decomposed as $x=(x_1,x_2)\in\R^{n_1}\times\R^{n_2}$
	 	and there exist set-valued mappings
	 	$\widetilde\Gamma_1\colon\R^{n_1}\tto\R^{\ell_1}$ and 
	 	$\widetilde\Gamma_2\colon\R^{n_2}\tto\R^{\ell_2}$
	 	as well as locally Lipschitz continuous functions
	 	$\gamma_1\colon\R^{n_2}\to\R^{\ell_1}$ and $\gamma_2\colon\R^{n_1}\to\R^{\ell_2}$
	 	such that
	 	\[
	 		\Gamma_1(x):=\widetilde\Gamma_1(x_1) +  \gamma_1(x_2)
	 		\qquad
	 		\Gamma_2(x):=\widetilde\Gamma_2(x_2)  + \gamma_2(x_1).
	 	\]
	 	The estimates from \cref{Thm:product_rule}
	 	can be specified via the decoupled sum rule,
	 	see \eqref{eq:Dec_Sum_SV}.
	\end{itemize}
\end{lemma}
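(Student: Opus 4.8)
The plan is to reduce both cases to the observation, already noted before the lemma, that \eqref{eq:CQ_product} holds as soon as one of the coderivative images $D^*\Gamma_i(\bar x,\bar z_i)(0)$ collapses to $\{0\}$, and---more subtly in case~(ii)---to exploit a block structure of these images which renders their intersection trivial even though neither image vanishes. Throughout I would use the Mordukhovich criterion \eqref{eq:Mcrit}, the fact that local Lipschitzianity of a single-valued map is equivalent to the Aubin property and hence characterized by a vanishing coderivative of $0$ (as recorded in the preliminaries), and the decoupled sum rule \eqref{eq:Dec_Sum_SV}, whose underlying inner calmness* hypotheses are automatically satisfied whenever one summand is single-valued and locally Lipschitzian.

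For case~(i), I would first show that the representation $\Gamma_2(x)=\gamma(x)+\Omega$ with $\gamma$ locally Lipschitzian forces $\Gamma_2$ to possess the Aubin property at $(\bar x,\bar z_2)$. Indeed, taking a Lipschitz modulus $L$ of $\gamma$ on a neighborhood of $\bar x$, the identity $\Gamma_2(x')=\gamma(x')+\Omega=\Gamma_2(x)+(\gamma(x')-\gamma(x))$ yields $\Gamma_2(x')\subset\Gamma_2(x)+L\norm{x'-x}\B$ for all $x,x'$ near $\bar x$, which is exactly the Aubin estimate from \cref{def:other_properties_of_set_valued_maps}. The Mordukhovich criterion \eqref{eq:Mcrit} then gives $D^*\Gamma_2(\bar x,\bar z_2)(0)=\{0\}$, so the intersection in \eqref{eq:CQ_product} reduces to $D^*\Gamma_1(\bar x,\bar z_1)(0)\cap\{0\}$, and this equals $\{0\}$ because $0$ always lies in $D^*\Gamma_1(\bar x,\bar z_1)(0)$. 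The analogous representation of $\Gamma_1$ is handled symmetrically.

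For case~(ii), I would compute the two coderivative images separately through the decoupled sum rule. Writing $x=(x_1,x_2)$ and applying \eqref{eq:Dec_Sum_SV} to $\Gamma_1(x_1,x_2)=\widetilde\Gamma_1(x_1)+\gamma_1(x_2)$ (with $\gamma_1$ the single-valued Lipschitzian summand) at the argument $z_1^*=0$, I obtain $D^*\Gamma_1(\bar x,\bar z_1)(0)\subset D^*\widetilde\Gamma_1(\bar x_1,\bar z_1-\gamma_1(\bar x_2))(0)\times D^*\gamma_1(\bar x_2)(0)$. Since $\gamma_1$ is locally Lipschitzian, $D^*\gamma_1(\bar x_2)(0)=\{0\}$, so every element of $D^*\Gamma_1(\bar x,\bar z_1)(0)$ has the form $(a_1,0)\in\R^{n_1}\times\R^{n_2}$. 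Applying the decoupled sum rule in its symmetric orientation to $\Gamma_2(x_1,x_2)=\gamma_2(x_1)+\widetilde\Gamma_2(x_2)$ shows analogously that every element of $D^*\Gamma_2(\bar x,\bar z_2)(0)$ has the form $(0,b_2)$. Hence the two images lie in the complementary subspaces $\R^{n_1}\times\{0\}$ and $\{0\}\times\R^{n_2}$, whose intersection is $\{0\}$; since $0$ belongs to both images, \eqref{eq:CQ_product} follows, and the estimates of \cref{Thm:product_rule} can then be specialized further via \eqref{eq:Dec_Sum_SV}.

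The routine part is case~(i), which is a one-line Aubin estimate followed by the Mordukhovich criterion. The main care is required in case~(ii): one must invoke the decoupled sum rule in both orientations---its statement fixes the Lipschitzian summand as the second one, so for $\Gamma_2$ one relabels the two input blocks and then permutes the output back to the natural ordering on $\R^{n_1}\times\R^{n_2}$---and keep precise track of which block each coderivative acts on, so that the two images genuinely land in complementary subspaces. One should also emphasize that the inner calmness* requirements underlying \eqref{eq:Dec_Sum_SV} are not an extra hypothesis here, as they hold automatically whenever a summand is single-valued and locally Lipschitzian.
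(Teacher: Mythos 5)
Your proof is correct, and for part (ii) it coincides with the paper's argument: apply the decoupled sum rule \eqref{eq:Dec_Sum_SV} in both orientations at $z^*=0$, use $D^*\gamma_1(\bar x_2)(0)=\{0\}$ and $D^*\gamma_2(\bar x_1)(0)=\{0\}$ (local Lipschitzness), and conclude that the two coderivative images lie in the complementary blocks $\R^{n_1}\times\{0\}$ and $\{0\}\times\R^{n_2}$, so that \eqref{eq:CQ_product} follows. The only divergence is in part (i): the paper stays at the coderivative level, applying the simple sum rule \cite[Corollary~3.10]{Mo18} to $\Gamma_2=\gamma+\Omega$ together with $0\in N_{\Omega}(\bar z_2-\gamma(\bar x))$ to get $D^*\Gamma_2(\bar x,\bar z_2)(0)\subset D^*\gamma(\bar x)(0)=\{0\}$, whereas you verify the Aubin property of $\Gamma_2$ directly from the translation identity $\Gamma_2(x')=\Gamma_2(x)+\bigl(\gamma(x')-\gamma(x)\bigr)$ and then invoke the Mordukhovich criterion \eqref{eq:Mcrit}. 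Both routes establish the same intermediate fact $D^*\Gamma_2(\bar x,\bar z_2)(0)=\{0\}$; yours is more self-contained, needing only the definitional Aubin estimate and the (easier, necessity) direction of \eqref{eq:Mcrit}, while the paper's keeps the argument uniformly inside the coderivative calculus on which the rest of the section is built. Your attention to the orientation/permutation issue when applying \eqref{eq:Dec_Sum_SV} to $\Gamma_2$, and to the fact that $0$ always belongs to both images so that the intersection genuinely \emph{equals} $\{0\}$, are correct and appropriate finishing touches.
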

\begin{proof}
    In case (i), \eqref{eq:CQ_product} follows from
    the Aubin property of $\Gamma_2$.
    Indeed, by the simple sum rule
    \cite[Corollary 3.10]{Mo18}, we obtain
    $D^*\Gamma_2(\xb,\zb_2)(0) \subset D^*\gamma(\xb)(0)$,
    where we exploited that $0 \in N_{\Omega}(\zb_2 - \gamma(\xb))$
    holds.
    Lipschitzness of $\gamma$ thus yields the claim.
 	For the proof of (ii), \eqref{eq:Dec_Sum_SV} yields
 	\begin{align*}
 		&D^*\Gamma_1((\bar x_1,\bar x_2),\bar z_1)(0)
 			\cap\bigl(-D^*\Gamma_2((\bar x_1,\bar x_2),\bar z_2)(0)\bigr)\\
 		&\qquad
 		\subset
 			\Bigl(D^*\widetilde\Gamma_1(\bar x_1,\bar z_1)(0)
 				\cap\bigl(-D^*\gamma_2(\bar x_1)(0)\bigr)\Bigr)
 			\times
 			\Bigl(D^*\gamma_1(\bar x_2)(0)
 				\cap\bigl(-D^*\widetilde{\Gamma}_2(\bar x_2,\bar z_2)(0)\bigr)\Bigr),
 	\end{align*}
	and due to $D^*\gamma_2(\bar x_1)(0)=\{0\}$ as well as $D^*\gamma_1(\bar x_2)(0)=\{0\}$,
 	which follows by local Lipschitz continuity of $\gamma_2$ and $\gamma_1$,
 	we obtain validity of the constraint qualification \eqref{eq:CQ_product}.
\end{proof}

In case where the single-valued functions appearing in the previous lemma
are continuously differentiable,
it is possible to obtain the product rule from 
\cref{lem:change_of_coordinates,lem:cartesian_products}
which yields slightly stronger results.
\begin{lemma}\label{lem:product_rule_via_change_of_coordinates}
	Fix $(\bar x,(\bar z_1,\bar z_2))\in\gph\Gamma$. Then the following assertions hold.
	\begin{enumerate}
		\item[(i)] In the setting of \cref{lem:CQ_product_naturally_given} (i),
		if $\gamma$ is continuously differentiable, then, exemplary,
			\[\begin{aligned}
				&D\Gamma(\bar x,(\bar z_1,\bar z_2))(u)&
				&\subset&
				&D\Gamma_1(\bar x,\bar z_1)(u)
					\times
					\bigl(\nabla\gamma(\bar x)u+T_\Omega(\bar z_2-\gamma(\bar x))\bigr),\\
				 &D^*\Gamma(\bar x,(\bar z_1,\bar z_2))(z_1^*,z_2^*)&
				&=&
				&\begin{cases}
					D^*\Gamma_1(\bar x,\bar z_1)(z_1^*)
					+\nabla\gamma(\bar x)^\top z_2^*
					&-z_2^*\in N_\Omega(\bar z_2-\gamma(\bar z))\\
					\varnothing	&\text{otherwise,}
				 \end{cases}
			\end{aligned}
			\]
			and the analogous estimate for the regular coderivative
			also holds with equality.
		\item[(ii)] In the setting of \cref{lem:CQ_product_naturally_given} (ii),
		if $\gamma_1$ and $\gamma_2$ are continuously differentiable, then we have equality in the formulas for the regular and limiting coderivative of $\Gamma$.
	\end{enumerate}
\end{lemma}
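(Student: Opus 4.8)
The plan is to prove both assertions through one and the same device: in each case I would exhibit a continuously differentiable bijection $g$ whose Jacobian is everywhere invertible (hence, in particular, of full row rank) together with a Cartesian product $C$ of the graphs of the ``inner'' mappings (and, in case (i), the set $\Omega$) such that $\gph\Gamma=g^{-1}(C)$. Then \cref{lem:change_of_coordinates} transports the tangent cone and the regular, limiting, and directional limiting normal cones of $\gph\Gamma$ onto those of $C$ via the invertible linear maps $\nabla g$ and $\nabla g^\top$, while \cref{lem:cartesian_products} splits the cones of the product $C$. Since both lemmas furnish genuine \emph{equalities} for regular and limiting normals, the corresponding coderivative formulas come out as equalities; the tangent cone of a product is only captured by an inclusion, which is precisely why the graphical derivative can be recovered only up to ``$\subset$''. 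The decisive feature is that the invertibility of $\nabla g$ removes any need for a calmness- or constraint-qualification hypothesis, reducing everything to the two elementary lemmas.

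For (i) I would set $g(x,z_1,z_2):=(x,z_1,z_2-\gamma(x))$ on $\R^n\times\R^{\ell_1}\times\R^{\ell_2}$ and $C:=\gph\Gamma_1\times\Omega$, so that a direct check gives $\gph\Gamma=g^{-1}(C)$. Its Jacobian is block lower-triangular with identity diagonal blocks and is therefore invertible, with $\nabla g(\bar x,\bar z_1,\bar z_2)(u,w_1,w_2)=(u,w_1,w_2-\nabla\gamma(\bar x)u)$ and transpose $(a,b_1,b_2)\mapsto(a-\nabla\gamma(\bar x)^\top b_2,\,b_1,\,b_2)$. Pushing the product splittings of $T_C$, $\widehat N_C$, and $N_C$ at the point $(\bar x,\bar z_1,\bar z_2-\gamma(\bar x))$ through these maps and re-reading the outcome in coderivative notation yields the stated formulas: the transpose produces the shift by $\nabla\gamma(\bar x)^\top z_2^*$, while the forced identity $b_2=-z_2^*$ imposes the side condition $-z_2^*\in N_\Omega(\bar z_2-\gamma(\bar x))$ (this is the intended reading of the case distinction), outside of which the coderivative is empty. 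The tangent step only inherits $T_C\subset T_{\gph\Gamma_1}\times T_\Omega$, whence the inclusion for $D\Gamma$.

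For (ii) I would mimic this construction with $g(x_1,x_2,z_1,z_2):=(x_1,\,z_1-\gamma_1(x_2),\,x_2,\,z_2-\gamma_2(x_1))$ and $C:=\gph\widetilde\Gamma_1\times\gph\widetilde\Gamma_2$, again obtaining $\gph\Gamma=g^{-1}(C)$. The Jacobian is square, and solving its homogeneous system row by row (the identity blocks in the $x_1,x_2,z_1,z_2$ slots force every component to vanish) shows invertibility, hence full row rank; \cref{lem:change_of_coordinates,lem:cartesian_products} then deliver equality for the regular and limiting coderivatives of $\Gamma$, which can finally be made explicit through the decoupled sum rule \eqref{eq:Dec_Sum_SV}, the smooth cross-terms contributing exactly the transpose Jacobians $\nabla\gamma_1(\bar x_2)^\top$ and $\nabla\gamma_2(\bar x_1)^\top$. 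The only genuinely delicate points are the full-row-rank verification of $\nabla g$ and the careful bookkeeping of the coordinate permutation and the transpose, so that each cross-term lands on the correct coderivative; I would also flag explicitly that the graphical derivative cannot be upgraded to an equality here, since the tangent-cone product formula of \cref{lem:cartesian_products} is truly only an inclusion when neither factor is convex nor a smooth graph.
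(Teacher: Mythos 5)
Your proposal is correct and follows essentially the same route as the paper's proof: the identical change-of-coordinates maps $g$ and product sets $C$ in both cases, with everything reduced to \cref{lem:change_of_coordinates,lem:cartesian_products}, the invertibility of $\nabla g$ obviating any calmness hypothesis; you merely spell out the Jacobian-transpose bookkeeping that the paper leaves implicit. As a minor bonus, your choice $C:=\gph\widetilde\Gamma_1\times\gph\widetilde\Gamma_2$ in case (ii) is the accurate rendering of what the paper's displayed formula (written with $\Gamma_1,\Gamma_2$) intends.
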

\begin{proof}
	We find a continuously differentiable function 
	$g\colon\R^n\times\R^{\ell_1}\times\R^{\ell_2}\to\R^{n+\ell_1+\ell_2}$ with
	invertible Jacobian and
	a closed set $C\subset\R^{n+\ell_1+\ell_2}$ such that $\gph\Gamma=g^{-1}(C)$
	holds in both situations.
	More precisely, in (i), we can choose
	\[
		g(x,z_1,z_2):=(x,z_1,z_2-\gamma(x))
		\qquad
		C:=\gph\Gamma_1\times\Omega,
	\]
	while in (ii), we make use of
	\[
		g((x_1,x_2),z_1,z_2):=(x_1,z_1-\gamma_1(x_2),x_2,z_2-\gamma_2(x_1))
		\qquad
		C:=\gph\Gamma_1\times\gph\Gamma_2.
	\]
	Thus, the results directly follow from \cref{lem:change_of_coordinates,lem:cartesian_products}.
\end{proof}

We want to finalize this section with a brief remark regarding the intersection rule from
generalized differentiation.
\begin{remark}\label{rem:intersection_rule}
	Observe that we have
	\[
		\Gamma^{-1}(z_1,z_2)
		=
		\{
			x\,|\,z_1\in\Gamma_1(x),\,z_2\in\Gamma_2(x)
		\}
		=
		\Gamma_1^{-1}(z_1)\cap\Gamma_2^{-1}(z_2)
	\]
	by definition of $\Gamma$.
	Thus, we can rewrite the estimates and qualification conditions for products from
	\cref{Thm:product_rule} as well as 
	\cref{lem:CQ_product_naturally_given,lem:product_rule_via_change_of_coordinates}
	in terms of intersections.
	Exemplary, let us mention that for some point $((\bar z_1,\bar z_2),\bar x)\in\gph\Gamma^{-1}$
	such that $M$ is calm at $((0,0,0,0),(\bar x,\bar z_1,\bar z_2,\bar x,\bar x))$, we have
	\begin{align*}
		D\Gamma^{-1}((\bar z_1,\bar z_2),\bar x)(v_1,v_2)
		&\subset
		D\Gamma_1^{-1}(\bar z_1,\bar x)(v_1)\cap D\Gamma_2^{-1}(\bar z_2,\bar x)(v_2)\\
		D^*\Gamma^{-1}((\bar z_1,\bar z_2),\bar x)(x^*)
		&\subset
		\bigcup\limits_{(x_1^*,x_2^*),\,x_1^*+x_2^*=x^*}
		D^*\Gamma_1^{-1}(\bar z_1,\bar x)(x_1^*)\times D^*\Gamma_2^{-1}(\bar z_2,\bar x)(x_2^*),
	\end{align*}
	and similar estimates can be obtained for the regular and the directional limiting 
	coderivative.
\end{remark}

\section{Applications}\label{sec:applications}

The importance of graphical derivative and limiting coderivative,
in particular in connection with isolated calmness and the Aubin property,
is well known, briefly mentioned in \cref{sec:introduction},
and also demonstrated in the previous two sections.
Well known is also the role of the limiting coderivative (limiting normal cone)
in the formulation of reasonable first-order necessary optimality conditions
for optimization problems, namely in form of so-called {\em M-stationarity} conditions.
Here, we present some applications
of the regular coderivative (regular subdifferential)
and the directional limiting coderivative.

\subsection{Optimality Conditions for Minimax Problems Based on Regular Subgradients}\label{sec:maxmin}

Given a continuously differentiable function
$\varphi\colon\R^n\times\R^m\to\R$ and a set-valued mapping
$G\colon\R^m \tto \R^n$ with closed graph, set
\[
	f(x,y):= \varphi(x,y) + \delta_{\gph G^{-1}}(x,y).
\]
Above, for some set $Q\subset\R^\ell$, $\delta_Q\colon \R^\ell\to\overline\R$ denotes the indicator function of $Q$ which vanishes on $Q$ and possesses value $\infty$ on $\R^\ell\setminus Q$.
Clearly, $f$ is a lower semicontinuous function due to
$\epi f = \epi \varphi \cap (\gph G^{-1} \times \R)$.

We consider the following optimization problem of minimax type
\begin{equation}\label{eq:max_min}\tag{MaxMin}
   \max\limits_{y\in\Omega}\min\limits_{x\in G(y)}\varphi(x,y)
\end{equation}
where $\Omega\subset\R^m$ is a closed set. 
Problems of this type arise frequently in game theory and can be interpreted as a
particular instance of bilevel optimization.
We refer the interested reader to \cite{Danskin1966,DemyanovMalozemov1974} for an introduction to minimax
programming and to the monographs \cite{Dempe2002,Mo18} for a detailed discussion of
bilevel optimization.

Using the function $f$ defined above and
the associated marginal function $\vartheta\colon\R^m\to\overline\R$ from 
\cref{sec:marginal_functions}, we easily see that
\begin{equation}\label{eq:surroagate_max_min}
	\min\limits_{y\in\Omega}(-\vartheta)(y)
\end{equation}
is a suitable surrogate problem of \eqref{eq:max_min}.
Subsequently, we exploit $S\colon\R^m\tto\R^n$ in order to denote the solution mapping
of the inner minimization problem $\min_x\{\varphi(x,y)\,|\,x\in G(y)\}$. 
Furthermore, we make use of the mapping $M\colon\R^m\times\R\tto\R^n$ given by
\[
	M(y,\alpha):=\{x\in G(y)\,|\,(x,y,\alpha)\in\epi\varphi\}.
\]
\begin{theorem}\label{thm:optimality_conditions_max_min}
	Let $(\xb,\yb)\in\R^n\times\R^m$ be a locally optimal solution of \eqref{eq:max_min}, i.e.,
	assume that there is a neighborhood $V$ of $\yb$ such that $\vartheta(\yb)\geq\vartheta(y)$
	holds for all $y\in\Omega\cap V$ while $\xb\in S(\yb)$ is valid. 
	Furthermore, let $M$ be inner calm* at $(\yb,\vartheta(\yb))$
	w.r.t.\ $\dom M$ in the fuzzy sense.
	Then the following condition holds:
	\[
		\Bigl[
			y^*\in \nabla_y\varphi(x,\yb)+\widehat D^*G(\yb,x)(\nabla_x\varphi(x,\yb))
			\ \forall x\in S(\yb)
		\Bigr]
		\ \Longrightarrow \
		y^*\in\widehat N_\Omega(\yb).
	\]
\end{theorem}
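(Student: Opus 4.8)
The plan is to pass through the surrogate problem \eqref{eq:surroagate_max_min} and read the claimed implication as a first-order condition obtained by combining the regular subdifferential rule for the marginal function $\vartheta$ from \cref{The:ValueFunction} with a Fermat-type necessary condition at a local extremum. First I would record that local optimality of $(\xb,\yb)$ for \eqref{eq:max_min} means precisely that $\xb\in S(\yb)$ and that $\yb$ is a local maximizer of $\vartheta$ relative to $\Omega$, i.e.\ $\vartheta(y)\le\vartheta(\yb)$ for all $y\in\Omega$ sufficiently close to $\yb$.

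The core computation is to identify the premise of the asserted implication with the membership $(0,y^*)\in\bigcap_{x\in S(\yb)}\widehat\partial f(x,\yb)$. Since $\varphi$ is continuously differentiable and $f=\varphi+\delta_{\gph G^{-1}}$, the exact sum rule for the regular subdifferential (smooth function plus indicator) gives $\widehat\partial f(x,\yb)=\nabla\varphi(x,\yb)+\widehat N_{\gph G^{-1}}(x,\yb)$ for every $x\in S(\yb)$, where $\nabla\varphi=(\nabla_x\varphi,\nabla_y\varphi)$. Exploiting that $\gph G^{-1}$ is $\gph G$ with its two coordinate blocks interchanged, I would translate the normal-cone constraint into the regular coderivative via the equivalence $(a,b)\in\widehat N_{\gph G^{-1}}(x,\yb)\iff b\in\widehat D^*G(\yb,x)(-a)$. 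Matching the first ($\R^n$) component of $(0,y^*)$ then forces $a=-\nabla_x\varphi(x,\yb)$, while the second ($\R^m$) component yields $y^*-\nabla_y\varphi(x,\yb)=b\in\widehat D^*G(\yb,x)(\nabla_x\varphi(x,\yb))$. This is exactly the hypothesis of the theorem read at the point $x$, so the premise and $(0,y^*)\in\bigcap_{x\in S(\yb)}\widehat\partial f(x,\yb)$ coincide.

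With this identification in hand, the assumed fuzzy inner calmness* of $M$ at $(\yb,\vartheta(\yb))$ w.r.t.\ $\dom M$ activates the nontrivial (backward) direction of statement $\mathbf{\widehat{\partial}}$ in \cref{The:ValueFunction}, so that $(0,y^*)\in\bigcap_{x\in S(\yb)}\widehat\partial f(x,\yb)$ implies $y^*\in\widehat\partial\vartheta(\yb)$. Finally I would close the argument by the Fermat step: membership $y^*\in\widehat\partial\vartheta(\yb)$ means $\vartheta(y)\ge\vartheta(\yb)+\skalp{y^*,y-\yb}+\oo(\norm{y-\yb})$ for $y$ near $\yb$, and combining this with the local maximality inequality $\vartheta(y)\le\vartheta(\yb)$ along $\Omega$ gives $\skalp{y^*,y-\yb}\le\oo(\norm{y-\yb})$ for $y\in\Omega$ near $\yb$, which is precisely $y^*\in\widehat N_\Omega(\yb)$.

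The step I expect to be the main obstacle is the bookkeeping in the second paragraph: getting the coordinate swap between $\widehat N_{\gph G^{-1}}$ and $\widehat D^*G$ exactly right, including the sign convention built into the definition of the coderivative, and confirming that the smooth-plus-indicator sum rule holds here with genuine equality, so that the premise and $(0,y^*)\in\widehat\partial f(x,\yb)$ are truly equivalent rather than merely one-sidedly related. Everything else reduces to applying \cref{The:ValueFunction} and the defining variational inequality of the regular normal cone.
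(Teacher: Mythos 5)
Your proof is correct and follows essentially the same route as the paper: pass to the surrogate problem, identify the premise with $(0,y^*)\in\bigcap_{x\in S(\yb)}\widehat\partial f(x,\yb)$ via the exact smooth-plus-indicator sum rule and the coordinate swap between $\widehat N_{\gph G^{-1}}$ and $\widehat D^*G$ (your bookkeeping, including the sign in $b\in\widehat D^*G(\yb,x)(-a)$, is exactly right), then invoke statement $\mathbf{\widehat\partial}$ of \cref{The:ValueFunction} under fuzzy inner calmness*. The only cosmetic deviation is that you verify the final inclusion $\widehat\partial\vartheta(\yb)\subset\widehat N_\Omega(\yb)$ by a short self-contained Fermat argument, where the paper cites \cite[Theorem~4.3(i)]{MordukhovichNamYen2006}, and you correctly observe that only the backward direction of statement $\mathbf{\widehat\partial}$ is actually needed.
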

\begin{proof}
	The assumptions of the theorem guarantee that $\yb$ is a local minimizer of
	\eqref{eq:surroagate_max_min}. Thus, \cite[Theorem~4.3(i)]{MordukhovichNamYen2006}
	guarantees validity of 
	\begin{equation}\label{eq:regular_subdifferential_condition_max_min}
		\widehat\partial\vartheta(\yb)\subset\widehat{N}_\Omega(\yb).
	\end{equation}
	Due to the fuzzy inner calmness* of $M$, \cref{The:ValueFunction} guarantees
	\begin{align*}
		\widehat{\partial}\vartheta(\yb)
		&=
		\left\{
			y^*\,\middle|\,
			(0,y^*)\in\bigcap\nolimits_{x\in S(\yb)}\widehat{\partial}f(x,\yb)
		\right\}\\
		&=
		\left\{y^*\,\middle|\,
			(0,y^*)\in\bigcap\nolimits_{x\in S(\yb)}
				\left(\nabla\varphi(x,\yb)+\widehat{N}_{\gph G^{-1}}(x,\yb)\right)
		\right\}
	\end{align*}
	where we used the sum rule from \cite[Proposition~2.2]{MordukhovichNamYen2006} in
	order to decompose the regular subdifferential of $f$, and the equivalence of regular
	subgradients of the indicator function associated with a locally closed set as well as
	its regular normals, see \cite[Exercise~8.14]{RoWe98}. Elementary calculations now show 
	\[
		y^*\in\widehat{\partial}\vartheta(\yb)
		\quad\Longleftrightarrow\quad
		\bigl(y^*-\nabla_y\varphi(x,\yb),-\nabla_x\varphi(x,\yb)\bigr)
		\in\widehat{N}_{\gph G}(\yb,x)
		\quad
		\forall x\in S(\yb).
	\]
	Exploiting the definition of the regular coderivative as well as
	\eqref{eq:regular_subdifferential_condition_max_min}, the result follows. 
\end{proof}

Invoking \cref{cor:IC*_via_FOSCclm} and \eqref{eq:simpler_CQ_marginal_functions}
as well as the calculus rules for the directional limiting subdifferential from
\cite{BeGfrOut19}, 
we observe that $M$ is inner calm* in the fuzzy sense
w.r.t.\ $\dom M$ at $(\yb,\vartheta(\yb))$ whenever it is inner semicompact there while
the condition
\begin{align*}
	&\forall u\in DG(\yb,x)(0)\setminus\{0\}\colon\\
	&\qquad \nabla_x\varphi(x,\yb)u\leq 0\
	\ \Longrightarrow \
		\nabla_y\varphi(x,\yb)+D^*G((\yb,x);(0,u))(\nabla_x\varphi(x,\yb))
		\subset\{0\}
\end{align*}
holds for each $x\in S(\yb)$.

In order to infer a necessary optimality condition in terms of initial
problem data from \eqref{eq:regular_subdifferential_condition_max_min}, it is indeed
necessary to find a reasonable \emph{lower} estimate for $\widehat{\partial}\vartheta(\yb)$.
Here, our novel results from \cref{The:ValueFunction}, which yield an equivalent representation
of this subdifferential and, thus, an optimality condition of reasonable strength, work
quite nicely. In practice, it may happen that $\widehat{\partial}\vartheta(\yb)$ is empty
for points $(\xb,\yb)$ which are locally optimal to \eqref{eq:max_min}, and in this case,
\eqref{eq:regular_subdifferential_condition_max_min} holds trivially. However, one could still
use \cref{thm:optimality_conditions_max_min} in order to exclude points which do not provide
locally optimal solutions of \eqref{eq:max_min} in this case. 

In order to infer necessary optimality conditions in terms of limiting normals
for \eqref{eq:max_min}, one could presume local Lipschitzness of $\vartheta$ around the 
point of interest $\yb$ so that \cite[Theorem~8.15]{RoWe98} yields 
$0\in\partial(-\vartheta)(\yb)+N_\Omega(\yb)$.
Now, to apply our results from \cref{The:ValueFunction}, we need to convexify the limiting
subdifferential in order to get rid of the negative sign, i.e., we have to evaluate
$0\in-\conv\partial\vartheta(\yb)+N_\Omega(\yb)$ which is possible now with the aid of
Carath\'{e}odory's theorem whenever $M$ from above is inner semicompact at $(\yb,\vartheta(\yb))$.

\subsection{Semismoothness* Calculus}\label{sec:semismoothness}

In the recent paper \cite{GfrOut2019}, a new notion of {\em semismoothness*} for sets and set-valued mappings has been introduced
and used to propose a semismooth* Newton method for the solution of
generalized equations.
This notion is based on the directional limiting constructions
and so our results yield calculus rules for semismoothness*,
i.e., the rules that describe how this property can be transferred
from one or more objects to another by the transformations discussed
in \Cref{sec:recovering_calculus}.

Following \cite[Definition 3.1]{GfrOut2019},
we call a set $\Omega \subset \R^n$ semismooth* at $\zb \in \Omega$
if for all $w \in \R^n$, it holds
\[	
	\skalp{z^*,w}=0 \quad \forall z^* \in N_{\Omega}(\zb;w).
\]
A set-valued mapping $M\colon\R^m \tto \R^n$ is called semismooth*
at $(\yb,\xb) \in \gph M$ if $\gph M$ is semismooth* at $(\yb,\xb)$,
i.e., for all $(v, u) \in \R^m \times \R^n$, we have
\[
	\skalp{y^*,v}=\skalp{x^*,u} 
	\quad 
	\forall (x^*,y^*) \in \gph D^*M((\yb,\xb);(v,u)).
\]

\begin{theorem}
    Let $M\colon\R^m \tto \R^n$ be a set-valued mapping.
    \begin{itemize}
     \item[(i)] Suppose that $\yb\in\dom M$ is chosen such that $\gph M$ is locally closed around 
     $\{\yb\}\times\R^n$ and $\dom M$ is locally closed around $\yb$.
     If $M$ is semismooth* at $(\yb,\xb)$ for each $\xb \in M(\yb)$
     as well as inner calm* at $\yb$ w.r.t.\ $\dom M$,
     then $\dom M$ is semismooth* at $\yb$.
     \item[(ii)] Suppose that $M$ is locally closed around
     $(\yb,\xb) \in \gph M$.
     If $M$ is semismooth* as well as calm at $(\yb,\xb)$,
     then $M(\yb)$ is semismooth* at $\xb$.
    \end{itemize}
\end{theorem}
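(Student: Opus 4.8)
The plan is to read both claims directly off the directional-limiting-normal estimates of the two main theorems, using the semismoothness* of $M$ as the input. The unifying observation is that a directional normal to $\dom M$ (resp.\ $M(\yb)$) is always realized as a coderivative value of $M$ in a compatible direction, and the semismoothness* equality for $M$ then degenerates into the desired orthogonality because the complementary slot of the reference direction is paired with a zero entry of the corresponding graph normal. Concretely, I would work throughout with the dictionary $y^*\in D^*M((\yb,\xb);(v,u))(x^*)\iff(y^*,-x^*)\in N_{\gph M}((\yb,\xb);(v,u))$, so that semismoothness* of $M$ at $(\yb,\xb)$ reads: whenever $(y^*,-x^*)\in N_{\gph M}((\yb,\xb);(v,u))$ one has $\skalp{y^*,v}=\skalp{x^*,u}$.

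For (i), first I would fix $v\in\R^m$, dispose of the trivial case $v=0$ (where $\skalp{y^*,0}=0$ and $N_{\dom M}(\yb;0)=N_{\dom M}(\yb)$), and take $y^*\in N_{\dom M}(\yb;v)$. Since $M$ is inner calm* at $\yb$ w.r.t.\ $\dom M$, it is in particular inner calm* (hence inner semicompact) at $\yb$ in direction $v$ w.r.t.\ $\dom M$, so statement $\mathbf{dN}$ of \cref{The : IC*calc} applies \emph{with the extra union over $DM(\yb,\xb)(0)\cap\Sp$ suppressed}. This yields some $\xb\in M(\yb)$ and $u\in DM(\yb,\xb)(v)$ with $y^*\in D^*M((\yb,\xb);(v,u))(0)$, i.e.\ $(y^*,0)\in N_{\gph M}((\yb,\xb);(v,u))$. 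Applying semismoothness* of $M$ at $(\yb,\xb)$ to the direction $(v,u)$ and the normal $(y^*,0)$ gives $\skalp{y^*,v}=\skalp{0,u}=0$, which is exactly the defining condition for $\dom M$ to be semismooth* at $\yb$.

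For (ii), I would fix $u\in\R^n$, again dispose of $u=0$, and take $\tilde x^*\in N_{M(\yb)}(\xb;u)$. As $M$ is calm at $(\yb,\xb)$, it is calm there in direction $u$, so statement $\mathbf{dN}$ of \cref{The : Ccalc} gives $\tilde x^*=-x^*$ with $x^*\in\dom D^*M((\yb,\xb);(0,u))$, that is, there exists $y^*$ with $(y^*,-x^*)\in N_{\gph M}((\yb,\xb);(0,u))$. Semismoothness* of $M$ at $(\yb,\xb)$ for the direction $(0,u)$ then forces $\skalp{y^*,0}=\skalp{x^*,u}$, whence $\skalp{x^*,u}=0$ and consequently $\skalp{\tilde x^*,u}=-\skalp{x^*,u}=0$. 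This establishes semismoothness* of $M(\yb)$ at $\xb$.

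The hard part will not be analytic, since both arguments are bookkeeping with the coderivative–normal correspondence; the only point demanding real care is the direction matching. In (i) the image slot $u$ of the reference direction $(v,u)$ must vanish against the $0$ in $(y^*,0)$, and a normal emanating from a direction of the form $(0,u)$ would produce only the vacuous identity $0=0$ rather than control $\skalp{y^*,v}$; this is precisely why one must invoke the stronger \emph{inner calmness*} (not mere inner semicompactness) in order to discard those $(0,u)$-type normals. In (ii) the domain slot is already pinned to $0$, so calmness in direction $u$ suffices and no analogous difficulty arises. I would also state explicitly at the outset that full calmness and full inner calmness* furnish their directional counterparts in every direction, which legitimizes quoting the directional forms of \cref{The : IC*calc,The : Ccalc}.
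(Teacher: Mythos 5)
Your proposal is correct and follows essentially the same route as the paper's own proof: both read the claim directly off statement $\mathbf{dN}$ of \cref{The : IC*calc} (with the union over $DM(\yb,\xb)(0)\cap\Sp$ suppressed via inner calmness*, which is exactly why that hypothesis, rather than mere inner semicompactness, is imposed) and statement $\mathbf{dN}$ of \cref{The : Ccalc}, and then apply the semismoothness* identity to the directions $(v,u)$ and $(0,u)$, respectively. Your explicit remark that a normal arising from a $(0,u)$-type direction would only yield the vacuous identity $0=0$ makes precise a point the paper leaves implicit, but the argument itself is the same.
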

\begin{proof}
 To show (i), pick $v\in\R^m$ and $y^* \in N_{\dom M}(\yb;v)$.
 By \cref{The : IC*calc}, we infer that there exist $\xb \in M(\yb)$
 and $u \in DM(\yb,\xb)(v)$ such that
 $y^* \in D^*M((\yb,\xb);(v,u))(0)$ holds.
 Semismoothness* of $M$ at $(\yb,\xb)$ 
 now readily yields $\skalp{y^*,v}=\skalp{0,u} = 0$.
 
 To show (ii), let $u\in\R^n$ and $x^* \in N_{M(\yb)}(\xb;u)$ be given.
 \Cref{The : Ccalc} implies the existence of
 $y^* \in D^*M((\bar y,\bar x);(0,u))(-x^*)$
 and, hence, from semismoothness* of $M$ at $(\yb,\xb)$, we get 
 $\skalp{x^*,u}=-\skalp{y^*,0} = 0$.
\end{proof}

Naturally, one can derive similar results in more
useful situations described in \cref{sec:recovering_calculus}.
That is to say, in the settings of standard calculus rules,
semismoothness* can be transferred to the desired object
not only from $M$, but also from (semismoothness* of) the input data.
Exemplary, semismoothness* of the image representation mapping
$M_3(y):=g^{-1}(y) \cap C$, see \cref{sec:elementary_calculus},
is implied by semismoothness* of $g\colon\R^n\to\R^n$ and $C\subset\R^n$, provided $g$ is
Lipschitz continuous. Hence, semismoothness* is transferred
from $g$ and $C$, via $M_3$, to $g(C)=\dom M_3$.

\section{Concluding Remarks}\label{sec:conclusions}

In this paper, we have seen that the two enhanced estimates stated in
\Cref{The : IC*calc,The : Ccalc} are enough to recover the major calculus
rules of variational analysis regarding tangents as well as regular, limiting,
and directional limiting normals under mild calmness-type conditions. As a
by-product, we obtained calculus rules for the computation of generalized
derivatives associated with Cartesian products of set-valued mappings in
\cref{sec:product_rule}.
In \cref{Sec:SC}, we interrelated the different (inner) calmness-type conditions
which were used to establish the two patterns from \Cref{The : IC*calc,The : Ccalc}.
Particularly, we have shown that Gfrerer's \emph{first-order sufficient condition for
calmness} can be used to guarantee the validity of inner calmness* in the fuzzy
sense. It remains an open question whether the former condition already yields
inner calmness*.
Furthermore, a precise study on the different moduli of the calmness-type
properties seems to be 
an interesting subject of future research.

\section*{Acknowledgments}
The research of the first author was supported by the Austrian Science Fund
(FWF) under grants P29190-N32 and P32832-N.
The authors are grateful to Ji\v{r}\'{i} V. Outrata, who suggested
the application to semismoothness* from \cref{sec:semismoothness}.


\begin{thebibliography}{10}

\bibitem{AdamHenrionOutrata2018}
L.~Adam, R.~Henrion, and J.~Outrata.
\newblock On {M}-stationarity conditions in {M}{P}{E}{C}s and the associated
  qualification conditions.
\newblock {\em Mathematical Programming}, 168(1):229--259, 2018.

\bibitem{AubinEkeland1984}
J.-P. Aubin and I.~Ekeland.
\newblock {\em Applied Nonlinear Analysis}.
\newblock Wiley-Interscience, New York, 1984.

\bibitem{AubinFrankowska2009}
J.-P. Aubin and H.~Frankowska.
\newblock {\em Set-Valued Analysis}.
\newblock Birkh{\"a}user, Boston, 2009.

\bibitem{Be19}
M.~Benko.
\newblock On inner calmness{$^*$}, generalized calculus, and derivatives of the
  normal cone mapping.
\newblock {\em arXiv}, pages 1--27, 2019.

\bibitem{BenkoCervinkaHoheisel2019}
M.~Benko, M.~{\v{C}}ervinka, and T.~Hoheisel.
\newblock Sufficient conditions for metric subregularity of constraint systems
  with applications to disjunctive and ortho-disjunctive programs.
\newblock {\em Set-Valued and Variational Analysis}, pages 1--35, 2021.

\bibitem{BeGfrOut19}
M.~Benko, H.~Gfrerer, and J.~V. Outrata.
\newblock Calculus for directional limiting normal cones and subdifferentials.
\newblock {\em Set-Valued and Variational Analysis}, 27(3):713--745, 2019.

\bibitem{BeGfrOut18}
M.~Benko, H.~Gfrerer, and J.~V. Outrata.
\newblock Stability analysis for parameterized variational systems with
  implicit constraints.
\newblock {\em Set-Valued and Variational Analysis}, 28(1):167--193, 2020.

\bibitem{CiFaKru18}
R.~Cibulka, M.~Fabian, and A.~Y. Kruger.
\newblock On semiregularity of mappings.
\newblock {\em Journal of Mathematical Analysis and Applications},
  473(2):811--836, 2019.

\bibitem{CaGiHePa19}
M.~Cánovas, M.~Gisbert, R.~Henrion, and J.~Parra.
\newblock Lipschitz lower semicontinuity moduli for linear inequality systems.
\newblock {\em Journal of Mathematical Analysis and Applications},
  490(2):1--21, 2020.

\bibitem{Danskin1966}
J.~M. Danskin.
\newblock The theory of max-min, with applications.
\newblock {\em SIAM Journal on Applied Mathematics}, 14(4):641--664, 1966.

\bibitem{Dempe2002}
S.~Dempe.
\newblock {\em Foundations of Bilevel Programming}.
\newblock Kluwer, Dordrecht, 2002.

\bibitem{DemyanovMalozemov1974}
V.~F. Demyanov and V.~V. Malozemov.
\newblock {\em Introduction to Minimax}.
\newblock Wiley, New York, 1974.

\bibitem{DoRo04}
A.~L. Dontchev and R.~T. Rockafellar.
\newblock Regularity and conditioning of solution mappings in variational
  analysis.
\newblock {\em Set-Valued Analysis}, 12:79--109, 2004.

\bibitem{DoRo14}
A.~L. Dontchev and R.~T. Rockafellar.
\newblock {\em Implicit Functions and Solution Mappings}.
\newblock Springer, Heidelberg, 2014.

\bibitem{FabHenKruOut10}
M.~J. Fabian, R.~Henrion, A.~Y. Kruger, and J.~V. Outrata.
\newblock Error bounds: necessary and sufficient conditions.
\newblock {\em Set-Valued and Variational Analysis}, 18(2):121--149, 2010.

\bibitem{Gfr13a}
H.~Gfrerer.
\newblock On directional metric regularity, subregularity and optimality
  conditions for nonsmooth mathematical programs.
\newblock {\em Set-Valued and Variational Analysis}, 21(2):151--176, 2013.

\bibitem{Gfrerer2014}
H.~Gfrerer.
\newblock Optimality conditions for disjunctive programs based on generalized
  differentiation with application to mathematical programs with equilibrium
  constraints.
\newblock {\em SIAM Journal on Optimization}, 24(2):898--931, 2014.

\bibitem{GfrKl16}
H.~Gfrerer and D.~Klatte.
\newblock Lipschitz and {H}\"{o}lder stability of optimization problems and
  generalized equations.
\newblock {\em Mathematical Programming}, 158:35--75, 2016.

\bibitem{GfrOut16}
H.~Gfrerer and J.~V. Outrata.
\newblock On {L}ipschitzian properties of implicit multifunctions.
\newblock {\em SIAM Journal on Optimization}, 26(4):2160--2189, 2016.

\bibitem{GfrOut2019}
H.~Gfrerer and J.~V. Outrata.
\newblock On a semismooth$^*$ {N}ewton method for solving generalized
  equations.
\newblock {\em SIAM Journal on Optimization}, 31(1):489--517, 2021.

\bibitem{GfrererYe2017}
H.~Gfrerer and J.~J. Ye.
\newblock New constraint qualifications for mathematical programs with
  equilibrium constraints via variational analysis.
\newblock {\em SIAM Journal on Optimization}, 27(2):842--865, 2017.

\bibitem{GfrererYe2020}
H.~Gfrerer and J.~J. Ye.
\newblock New sharp necessary optimality conditions for mathematical programs
  with equilibrium constraints.
\newblock {\em Set-Valued and Variational Analysis}, 28(2):395--426, 2020.

\bibitem{GinchevMordukhovich2011}
I.~Ginchev and B.~S. Mordukhovich.
\newblock On directionally dependent subdifferentials.
\newblock {\em Proceedings of the Bulgarian Academy of Sciences},
  64(4):497--508, 2011.

\bibitem{HarderWachsmuth2018}
F.~Harder and G.~Wachsmuth.
\newblock The limiting normal cone of a complementarity set in {S}obolev
  spaces.
\newblock {\em Optimization}, 67(10):1579--1603, 2018.

\bibitem{HenJouOut02}
R.~Henrion, A.~Jourani, and J.~V. Outrata.
\newblock On the calmness of a class of multifunctions.
\newblock {\em SIAM Journal on Optimization}, 13(2):603--618, 2002.

\bibitem{HenOut05}
R.~Henrion and J.~V. Outrata.
\newblock Calmness of constraint systems with applications.
\newblock {\em Mathematical Programming}, 104(1):437--464, 2005.

\bibitem{Iof79a}
A.~D. Ioffe.
\newblock Necessary and sufficient conditions for a local minimum. 1: {A}
  reduction theorem and first order conditions.
\newblock {\em SIAM Journal on Control and Optimization}, 17(2):245--250, 1979.

\bibitem{Io17}
A.~D. Ioffe.
\newblock {\em Variational Analysis of Regular Mappings}.
\newblock Springer, Cham, 2017.

\bibitem{IofOut08}
A.~D. Ioffe and J.~V. Outrata.
\newblock On metric and calmness qualification conditions in subdifferential
  calculus.
\newblock {\em Set-Valued Analysis}, 16(2):199--227, 2008.

\bibitem{IofPen96}
A.~D. Ioffe and J.-P. Penot.
\newblock Subdifferentials of performance functions and calculus of
  coderivatives of set-valued mappings.
\newblock {\em Serdica Mathematical Journal}, 22(3):359--384, 1996.

\bibitem{Kl87}
D.~Klatte.
\newblock A note on quantitative stability results in nonlinear programming.
\newblock {\em Seminarbericht, Sektion Mathematik, Humboldt-Universit\"{a}t zu
  Berlin}, 90:77--86, 1987.

\bibitem{Kl94}
D.~Klatte.
\newblock On quantitative stability for non-isolated minima.
\newblock {\em Control and Cybernetics}, 23:183--200, 1994.

\bibitem{KlKum02}
D.~Klatte and B.~Kummer.
\newblock {\em Nonsmooth Equations in Optimization}.
\newblock Kluwer Academic, Dordrecht, 2002.

\bibitem{KlKu15}
D.~Klatte and B.~Kummer.
\newblock On calmness of the argmin mapping in parametric optimization
  problems.
\newblock {\em Journal of Optimization Theory and Applications},
  165(3):708--719, 2015.

\bibitem{Levy96}
A.~B. Levy.
\newblock Implicit multifunction theorems for the sensitivity analysis of
  variational conditons.
\newblock {\em Mathematical Programming}, 74:333--350, 1996.

\bibitem{LongWangYang2017}
P.~Long, B.~Wang, and X.~Yang.
\newblock Calculus for directional coderivatives and normal cones in {A}splund
  spaces.
\newblock {\em Positivity}, 21:1115--1142, 2017.

\bibitem{Mehlitz2019a}
P.~Mehlitz.
\newblock On the sequential normal compactness condition and its
  restrictiveness in selected function spaces.
\newblock {\em Set-Valued and Variational Analysis}, 27(3):763--782, 2019.

\bibitem{MehlitzWachsmuth2018}
P.~Mehlitz and G.~Wachsmuth.
\newblock The limiting normal cone to pointwise defined sets in {L}ebesgue
  spaces.
\newblock {\em Set-Valued and Variational Analysis}, 26(3):449--467, 2018.

\bibitem{Mordukhovich1976}
B.~S. Mordukhovich.
\newblock Maximum principle in the problem of time optimal response with
  nonsmooth constraints.
\newblock {\em Journal of Applied Mathematics and Mechanics}, 40(6):960--969,
  1976.

\bibitem{Mo06}
B.~S. Mordukhovich.
\newblock {\em Variational Analysis and Generalized Differentiation, Vol.\ I:
  Basic Theory, Vol.\ II: Applications}.
\newblock Springer, Berlin, 2006.

\bibitem{Mo18}
B.~S. Mordukhovich.
\newblock {\em Variational Analysis and Applications}.
\newblock Springer, Cham, 2018.

\bibitem{MordukhovichNamYen2006}
B.~S. Mordukhovich, N.~M. Nam, and N.~D. Yen.
\newblock Fr{\'{e}}chet subdifferential calculus and optimality conditions in
  nondifferentiable programming.
\newblock {\em Optimization}, 55(5--6):685--708, 2006.

\bibitem{Nechita2012}
D.-M. Nechita.
\newblock About some links between the {D}ini--{H}adamard-like normal cone and
  the contingent cone.
\newblock {\em Studia Universitatis Babe{\c{s}}--Bolyai Mathematica},
  57(4):541--549, 2012.

\bibitem{Rob81}
S.~M. Robinson.
\newblock Some continuity properties of polyhedral multifunctions.
\newblock In H.~K{\"o}nig, B.~Korte, and K.~Ritter, editors, {\em Mathematical
  Programming at {O}berwolfach}, pages 206--214. Springer, Berlin, 1981.

\bibitem{Ro70}
R.~T. Rockafellar.
\newblock {\em Convex Analysis}.
\newblock Princeton University Press, Princeton, 1970.

\bibitem{RoWe98}
R.~T. Rockafellar and R.~J.-B. Wets.
\newblock {\em Variational Analysis}.
\newblock Springer, Berlin, 1998.

\bibitem{YeZhou2018}
J.~J. Ye and J.~Zhou.
\newblock Verifiable sufficient conditions for the error bound property of
  second-order cone complementarity problems.
\newblock {\em Mathematical Programming}, 171(1):361--395, 2018.

\end{thebibliography}

\end{document}